\DeclareSymbolFont{bbold}{U}{bbold}{m}{n}
\DeclareSymbolFontAlphabet{\mathbbold}{bbold}
\DeclareMathOperator{\cond}{cond}
\DeclareMathOperator{\Li}{Li}
\DeclareMathOperator{\prob}{Prob}
\DeclareMathOperator{\disc}{disc}
\DeclareMathOperator{\rad}{rad}
\DeclareMathOperator{\genus}{genus}
\DeclareMathOperator{\frob}{Frob}
\DeclareMathOperator{\GL}{GL}
\DeclareMathOperator{\SL}{SL}
\DeclareMathOperator{\PSL}{PSL}
\chardef\bslash=`\\ 
\begin{document}

\restylefloat{table}

\newtheorem{Theorem}{Theorem}[section]

\newtheorem{example}[Theorem]{Example}
\newtheorem{cor}[Theorem]{Corollary}
\newtheorem{goal}[Theorem]{Goal}

\newtheorem{Conjecture}[Theorem]{Conjecture}
\newtheorem{guess}[Theorem]{Guess}

\newtheorem{exercise}[Theorem]{Exercise}
\newtheorem{Question}[Theorem]{Question}
\newtheorem{lemma}[Theorem]{Lemma}
\newtheorem{property}[Theorem]{Property}
\newtheorem{proposition}[Theorem]{Proposition}
\newtheorem{ax}[Theorem]{Axiom}
\newtheorem{claim}[Theorem]{Claim}

\newtheorem{nTheorem}{Surjectivity Theorem}

\theoremstyle{definition}
\newtheorem{Definition}[Theorem]{Definition}
\newtheorem{problem}[Theorem]{Problem}
\newtheorem{question}[Theorem]{Question}
\newtheorem{Example}[Theorem]{Example}

\newtheorem{remark}[Theorem]{Remark}
\newtheorem{diagram}{Diagram}
\newtheorem{Remark}[Theorem]{Remark}
\newcommand{\diagref}[1]{diagram~\ref{#1}}
\newcommand{\thmref}[1]{Theorem~\ref{#1}}
\newcommand{\secref}[1]{Section~\ref{#1}}
\newcommand{\subsecref}[1]{Subsection~\ref{#1}}
\newcommand{\lemref}[1]{Lemma~\ref{#1}}
\newcommand{\corref}[1]{Corollary~\ref{#1}}
\newcommand{\exampref}[1]{Example~\ref{#1}}
\newcommand{\remarkref}[1]{Remark~\ref{#1}}
\newcommand{\corlref}[1]{Corollary~\ref{#1}}
\newcommand{\claimref}[1]{Claim~\ref{#1}}
\newcommand{\defnref}[1]{Definition~\ref{#1}}
\newcommand{\propref}[1]{Proposition~\ref{#1}}
\newcommand{\prref}[1]{Property~\ref{#1}}
\newcommand{\itemref}[1]{(\ref{#1})}
\newcommand{\ul}[1]{\underline{#1}}


\newcommand{\CE}{\mathcal{E}}
\newcommand{\CG}{\mathcal{G}}\newcommand{\CV}{\mathcal{V}}
\newcommand{\CL}{\mathcal{L}}
\newcommand{\CM}{\mathcal{M}}
\newcommand{\A}{\mathcal{A}}
\newcommand{\CO}{\mathcal{O}}
\newcommand{\B}{\mathcal{B}}
\newcommand{\CS}{\mathcal{S}}
\newcommand{\CX}{\mathcal{X}}
\newcommand{\CY}{\mathcal{Y}}
\newcommand{\CT}{\mathcal{T}}
\newcommand{\CW}{\mathcal{W}}
\newcommand{\CJ}{\mathcal{J}}

\newcommand\myeq{\mathrel{\stackrel{\makebox[0pt]{\mbox{\normalfont\tiny def}}}
{\Longleftrightarrow}}}
\newcommand{\st}{\sigma}
\renewcommand{\k}{\varkappa}
\newcommand{\Frac}{\mbox{Frac}}
\newcommand{\XC}{\mathcal{X}}
\newcommand{\wt}{\widetilde}
\newcommand{\wh}{\widehat}
\newcommand{\mk}{\medskip}
\renewcommand{\sectionmark}[1]{}
\renewcommand{\Im}{\operatorname{Im}}
\renewcommand{\Re}{\operatorname{Re}}
\newcommand{\la}{\langle}
\newcommand{\ra}{\rangle}
\newcommand{\LND}{\mbox{LND}}
\newcommand{\Pic}{\mbox{Pic}}
\newcommand{\lnd}{\mbox{lnd}}
\newcommand{\GLND}{\mbox{GLND}}\newcommand{\glnd}{\mbox{glnd}}
\newcommand{\Der}{\mbox{DER}}\newcommand{\DER}{\mbox{DER}}
\renewcommand{\th}{\theta}
\newcommand{\ve}{\varepsilon}
\newcommand{\1}{^{-1}}
\newcommand{\iy}{\infty}
\newcommand{\iintl}{\iint\limits}
\newcommand{\capl}{\operatornamewithlimits{\bigcap}\limits}
\newcommand{\cupl}{\operatornamewithlimits{\bigcup}\limits}
\newcommand{\suml}{\sum\limits}
\newcommand{\ord}{\operatorname{ord}}
\newcommand{\gal}{\operatorname{Gal}}
\newcommand{\bk}{\bigskip}
\newcommand{\fc}{\frac}
\newcommand{\g}{\gamma}
\newcommand{\be}{\beta}
\newcommand{\dl}{\delta}
\newcommand{\Dl}{\Delta}
\newcommand{\lm}{\lambda}
\newcommand{\Lm}{\Lambda}
\newcommand{\om}{\omega}
\newcommand{\ov}{\overline}
\newcommand{\vp}{\varphi}
\newcommand{\kap}{\varkappa}

\newcommand{\Vp}{\Phi}
\newcommand{\Varphi}{\Phi}
\newcommand{\BC}{\mathbb{C}}
\newcommand{\C}{\mathbb{C}}\newcommand{\BP}{\mathbb{P}}
\newcommand{\BQ}{\mathbb {Q}}
\newcommand{\BM}{\mathbb{M}}
\newcommand{\mbh}{\mathbb{H}}
\newcommand{\BR}{\mathbb{R}}\newcommand{\BN}{\mathbb{N}}
\newcommand{\BZ}{\mathbb{Z}}\newcommand{\BF}{\mathbb{F}}
\newcommand{\BA}{\mathbb {A}}
\renewcommand{\Im}{\operatorname{Im}}
\newcommand{\idd}{\operatorname{id}}
\newcommand{\ep}{\epsilon}
\newcommand{\tp}{\tilde\partial}
\newcommand{\doe}{\overset{\text{def}}{=}}
\newcommand{\supp} {\operatorname{supp}}
\newcommand{\loc} {\operatorname{loc}}
\newcommand{\de}{\partial}
\newcommand{\z}{\zeta}
\renewcommand{\a}{\alpha}
\newcommand{\G}{\Gamma}
\newcommand{\der}{\mbox{DER}}

\newcommand{\Spec}{\operatorname{Spec}}
\newcommand{\Sym}{\operatorname{Sym}}
\newcommand{\Aut}{\operatorname{Aut}}

\newcommand{\Idd}{\operatorname{Id}}

\newcommand{\tG}{\widetilde G}
\newcommand{\F}{\mathbb{F}}
\newcommand{\Q}{\mathbb{Q}}
\newcommand{\Z}{\mathbb{Z}}
\newcommand{\XG}{\text{N}_{s}(5)'}
\newcommand{\tB}{\text{B}}
\newcommand{\Gal}{\text{Gal}}
\newcommand{\cX}{\mathcal{X}}
\newcommand{\Inn}{\text{Inn}}
\newcommand{\bP}{\mathbf{P}}
\newcommand{\FX}{\mathfrac {X}}
\newcommand{\FV}{\mathfrac {V}}
\newcommand{\SX}{\mathcal {X}}
\newcommand{\SV}{\mathcal {V}}
\newcommand{\SO}{\mathcal {O}}
\newcommand{\SD}{\mathcal {D}}
\newcommand{\Sr}{\rho}
\newcommand{\SR}{\mathcal {R}}
\newcommand{\cl}{\mathcal{C}}
\newcommand{\ok}{\mathcal{O}_K}
\newcommand{\ab}{\mathcal{AB}}

\setcounter{equation}{0} \setcounter{section}{0}

\newcommand{\ds}{\displaystyle}
\newcommand{\gl}{\lambda}
\newcommand{\gL}{\Lambda}
\newcommand{\gge}{\epsilon}
\newcommand{\gG}{\Gamma}
\newcommand{\ga}{\alpha}
\newcommand{\gb}{\beta}
\newcommand{\gd}{\delta}
\newcommand{\gD}{\Delta}
\newcommand{\gs}{\sigma}
\newcommand{\mbq}{\mathbb{Q}}
\newcommand{\mbr}{\mathbb{R}}
\newcommand{\mbz}{\mathbb{Z}}
\newcommand{\mbc}{\mathbb{C}}
\newcommand{\mbn}{\mathbb{N}}
\newcommand{\mbp}{\mathbb{P}}
\newcommand{\mbf}{\mathbb{F}}
\newcommand{\mbe}{\mathbb{E}}
\newcommand{\lcm}{\text{lcm}\,}
\newcommand{\mf}[1]{\mathfrak{#1}}
\newcommand{\ol}[1]{\overline{#1}}
\newcommand{\mc}[1]{\mathcal{#1}}
\newcommand{\mb}[1]{\mathbb{#1}}
\newcommand{\nequiv}{\equiv\hspace{-.07in}/\;}
\newcommand{\bnequiv}{\equiv\hspace{-.13in}/\;}

\title{On the acyclicity of reductions of elliptic curves modulo primes in arithmetic progressions}
\author{Nathan Jones and Sung Min Lee}

\renewcommand{\thefootnote}{\fnsymbol{footnote}} 
\footnotetext{\emph{Key words and phrases:} Elliptic curves, Galois representations, cyclicity}     
\renewcommand{\thefootnote}{\arabic{footnote}}

\renewcommand{\thefootnote}{\fnsymbol{footnote}} 
\footnotetext{\emph{2010 Mathematics Subject Classification:} Primary 11G05, 11F80}     
\renewcommand{\thefootnote}{\arabic{footnote}} 

\date{}

\begin{abstract}
Let $E$ be an elliptic curve defined over $\mbq$ and, for a prime $p$ of good reduction for $E$ let $\tilde{E}_p$ denote the reduction of $E$ modulo $p$.  Inspired by an elliptic curve analogue of Artin's primitive root conjecture posed by S. Lang and H. Trotter in 1977, J-P. Serre adapted methods of C. Hooley to prove a GRH-conditional asymptotic formula for the number of primes $p \leq x$ for which the group $\tilde{E}_p(\mbf_p)$ is cyclic.  More recently, Akbal and G\"{u}lo$\breve{\text{g}}$lu considered the question of cyclicity of $\tilde{E}_p(\mbf_p)$ under the additional restriction that $p$ lie in an arithmetic progression.  In this note, we study the issue of which arithmetic progressions $a \bmod n$ have the property that, for all but finitely many primes $p \equiv a \bmod n$, the group $\tilde{E}_p(\mbf_p)$ is \emph{not} cyclic, answering a question of Akbal and G\"{u}lo$\breve{\text{g}}$lu on this issue.  
\end{abstract}

\maketitle

\section{Introduction and statement of results} \label{introduction}
Let $E$ be an elliptic curve defined over $\mbq$ and let $p$ be a rational prime of good reduction for $E$, i.e. assume that the reduction $\tilde{E}_p$ of $E$ modulo $p$, obtained by reducing a minimal Weierstrass model of $E$ modulo $p$, is non-singular. Then $\tilde{E}_p$ is itself an elliptic curve over the finite field $\mbf_p$, and we may consider the group $\tilde{E}_p(\mbf_p)$ of $\mbf_p$-rational points of $\tilde{E}_p$.

There are various open questions surrounding the nature of the abelian groups $\tilde{E}_p(\mbf_p)$, as $p$ varies. One such example is the following conjecture about the cyclicity of $\tilde{E}_p(\mbf_p)$, which underlies an elliptic curve analogue of Artin's primitive root conjecture that was proposed by S. Lang and H. Trotter in 1977 \cite{langtrotter}.
\begin{Conjecture} \label{cyclicityconjecture}
Let $E$ be an elliptic curve over $\mbq$ of conductor $N_E$.  There is a constant $C_E \geq 0$ such that
\begin{equation} \label{cyclicityasymptotic}
\lim_{x \to \infty} \frac{ \left| \{ p \leq x : p \nmid N_E, \; \tilde{E}_p(\mbf_p) \text{ is cyclic} \} \right| }{ \left| \{ p \leq x \} \right| } = C_E.
\end{equation}
\end{Conjecture}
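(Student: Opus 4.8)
The plan is to establish \eqref{cyclicityasymptotic}, with the explicit value
\begin{equation*}
C_E = \sum_{n=1}^{\infty} \frac{\mu(n)}{[\mbq(E[n]):\mbq]},
\end{equation*}
conditionally on the Generalized Riemann Hypothesis, following Hooley's treatment of Artin's primitive root conjecture as adapted to elliptic curves by Serre. The starting point is the group-theoretic criterion: for $p \nmid N_E$, the abelian group $\tilde{E}_p(\mbf_p)$ fails to be cyclic if and only if there is a prime $\ell$ with $(\mbz/\ell\mbz)^2 \hookrightarrow \tilde{E}_p(\mbf_p)$, equivalently with $E[\ell] \subseteq \tilde{E}_p(\mbf_p)$, equivalently with $p$ splitting completely in the $\ell$-division field $\mbq(E[\ell])$ (and, by the Weil pairing, this forces $\ell \mid p-1$). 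Writing $\pi_E(x,n)$ for the number of primes $p \le x$ unramified in $\mbq(E[n])$ that split completely there, and using that for squarefree $n$ the field $\mbq(E[n])$ is the compositum of the $\mbq(E[\ell])$ over $\ell \mid n$, an inclusion--exclusion over the primes $\ell$ gives
\begin{equation*}
|\{ p \le x : p \nmid N_E, \ \tilde{E}_p(\mbf_p) \text{ is cyclic} \}| = \sum_{n \ge 1} \mu(n)\, \pi_E(x,n).
\end{equation*}

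The next step is to fix a truncation point $y = y(x)$ and evaluate the head $\sum_{n \le y}\mu(n)\pi_E(x,n)$ by the effective Chebotarev density theorem. Under GRH for the Dedekind zeta function of $\mbq(E[n])$ one has
\begin{equation*}
\pi_E(x,n) = \frac{\Li(x)}{[\mbq(E[n]):\mbq]} + O\big( n^{A} \sqrt{x}\, \log(nx) \big)
\end{equation*}
for some absolute constant $A$, the implied constant depending on $E$ (the factor $n^A$ accounting for the degree and discriminant of $\mbq(E[n])$, both of which Serre's work bounds in terms of $n$ and $N_E$). Choosing $y$ to be a suitable small power of $x$, summing against $\mu(n)$, and using that the completed series converges, one extracts the main term $C_E\,\Li(x)$; the absolute convergence of the series defining $C_E$ comes from the lower bound $[\mbq(E[n]):\mbq] \gg_{E,\epsilon} n^{2-\epsilon}$ (valid for every $E/\mbq$, and improvable to $\gg_{E,\epsilon} n^{4-\epsilon}$ in the non-CM case via Serre's open image theorem), while the total contribution of the error terms for $n \le y$ is $o(\Li(x))$.

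For the tail $\sum_{n > y}\mu(n)\pi_E(x,n)$, one first observes that a prime counted by $\pi_E(x,n)$ has $E[n] \subseteq \tilde{E}_p(\mbf_p)$, hence $n^2 \le \#\tilde{E}_p(\mbf_p) \le (\sqrt{p}+1)^2$ and so $n \le \sqrt{x}+1$. In the range $y < n \le x^{1/2+\epsilon}$ one applies the trivial bound $\pi_E(x,n) \ll_E x/n^2$ (which uses $p \equiv 1 \bmod n$ together with a congruence constraint on $a_p$ modulo $n$); in the remaining range one runs Hooley's combinatorial argument, partitioning $n$ by its largest prime factor $\ell$ and applying GRH--Chebotarev to the single field $\mbq(E[\ell])$, to conclude that the tail is $o(\Li(x))$. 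Dividing by $|\{p \le x\}| \sim \Li(x)$ then yields \eqref{cyclicityasymptotic}.

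The step I expect to be the main obstacle is the tail estimate, and it is the reason the asymptotic is presently known only under GRH: since $[\mbq(E[n]):\mbq]$ grows like a power of $n$, the unconditional Chebotarev error term is too large to push the truncation point $y$ past $x^{o(1)}$, leaving the intermediate range $x^{o(1)} < n \le \sqrt{x}$ out of reach. A secondary, more arithmetic issue --- and the one most directly relevant to the questions taken up in this note --- is determining the degrees $[\mbq(E[n]):\mbq]$ precisely enough to decide when $C_E = 0$; this is a question about the image of the adelic Galois representation of $E$ and about entanglements among the division fields $\mbq(E[\ell])$, which is exactly what is analyzed in the sequel.
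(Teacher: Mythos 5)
The statement you were asked to prove is labeled as a \emph{Conjecture} in the paper, and the paper itself contains no proof of it; it is cited as open, with the surrounding discussion attributing a GRH-conditional proof to Serre, an unconditional CM proof to Murty, and a quasi-GRH non-CM proof to Cojocaru. Your sketch is, in essence, a faithful outline of the Hooley--Serre argument --- the splitting criterion via $\mbq(E[\ell])$, inclusion--exclusion over squarefree $n$, effective Chebotarev under GRH for the head, the Hasse bound $n \le \sqrt{x}+1$ and a Brun--Titchmarsh/largest-prime-factor argument for the tail --- and you correctly flag that the whole thing is conditional on GRH. So there is nothing to compare against in the paper, and nothing structurally wrong with your outline as a \emph{conditional} argument; but it does not, and cannot at present, establish the conjecture as stated, since the existence of the limit \eqref{cyclicityasymptotic} for non-CM curves remains open unconditionally.

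Two small technical cautions worth noting if you were to write this out in full. First, the equivalence ``$(\mbz/\ell\mbz)^2 \hookrightarrow \tilde{E}_p(\mbf_p)$ iff $p$ splits completely in $\mbq(E[\ell])$'' requires $\ell \neq p$, since $E[p](\overline{\mbf_p})$ is cyclic; this is harmless because the $\ell = p$ case contributes at most one prime, but it should be said. Second, the intermediate-range bound $\pi_E(x,n) \ll_E x/n^2$ does not follow from $p \equiv 1 \pmod n$ alone (Brun--Titchmarsh only gives $\ll x/(\varphi(n)\log(x/n))$); you genuinely need the second congruence $a_p \equiv 2 \pmod n$ and a sieve or lattice-point count to get the extra factor of $n$, and this is where some of the real work in Serre's and Cojocaru's treatments lives.
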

In fact, the constant $C_E$ is given explicitly by
\[
C_E = \sum_{n\geq 1} \frac{\mu(n)}{[\Q(E[n]):\Q]},
\]
where $\mu(\cdot)$ denotes the M\"{o}bius function and $\mbq(E[n])$ the $n$-division field of $E$.

Regarding the positivity of $C_E$, J-P. Serre \cite[pp. 465--466]{serreo} observed that
\begin{equation} \label{conditionforCEequals0}
C_E = 0 \; \Longleftrightarrow \; E[2] \subseteq E(\mbq)
\end{equation}
(a proof of this may be found in \cite[p. 619]{cojocarumurty}).
When \eqref{conditionforCEequals0} holds, the embedding $\mbz/2\mbz \times \mbz/2\mbz \simeq E[2] = E(\mbq)[2] \hookrightarrow \tilde{E}_p(\mbf_p)$ (see for instance \cite[VII, Proposition 3.1]{silverman}), valid for every odd prime $p \nmid N_E$, shows that
\[
C_E = 0 \; \Longrightarrow \; \left| \{ p \text{ prime} \, : p \nmid N_E, \; \tilde{E}_p(\mbf_p) \text{ is cyclic} \} \right| \leq 1. 
\]
It is well-known that $\tilde{E}_p(\mbf_p) \simeq \mbz/d_p(E)\mbz \times \mbz/e_p(E)\mbz$ for certain $d_p(E), e_p(E) \in \mbn$ with $d_p(E)$ dividing $e_p(E)$; obviously this group is cyclic if and only if its size $| \tilde{E}_p(\mbf_p)|$ is equal to its exponent $e_p(E)$.  The quantities $d_p(E)$ and $e_p(E)$ have been extensively studied (see \cite{banksetal}, \cite{davidetal}, \cite{duke}, \cite{freibergkurlberg} and the references therein).
The question of cyclicity of $\tilde{E}_p(\mbf_p)$ seems to have first appeared in a paper by I. Borosh, C. J. Moreno, and H. Porta in 1975 \cite{bmp}, which calculates the structures of $\tilde{E}_p(\mbf_p)$ for various elliptic curves $E$ and many primes $p$, and expresses a version of Conjecture \ref{cyclicityconjecture} without completely determining the constant $C_E$.

In 1979, J-P. Serre \cite[pp. 465--468]{serreo} proved Conjecture \ref{cyclicityconjecture}, assuming the Generalized Riemann Hypothesis (denoted GRH) by adapting techniques from C. Hooley's GRH-conditional proof of Artin's primitive root conjecture \cite{hooley} (a detailed proof can be found in \cite[pp. 160--161]{murty1}).  In 1983, M. Ram Murty \cite{murty1} gave an unconditional proof for elliptic curves with complex multiplication (denoted CM).  In 2002, A.C. Cojocaru proved a non-CM version of Conjecture \ref{cyclicityconjecture} under a weaker hypothesis than GRH \cite{cojocaru}. Specifically, Cojocaru proved the bound $\left| \{ p \leq x : p \nmid N_E, \; \tilde{E}_p(\mbf_p) \text{ is cyclic} \} \right| - C_E | \{ p \leq x \} | = O(x \log \log x / \log^2 x)$ under a quasi-GRH, i.e. assuming that the Dedekind zeta functions of the division fields of $E$ do not vanish for $\Re(s) > 3/4$ (further improvements on this remainder term may be found in \cite[Theorem 45]{cojocarusynopsis}).
In addition to the above-mentioned results, Conjecture \ref{cyclicityconjecture} has also been proved ``on average'' over elliptic curves $E$ of bounded height \cite{banksshparlinski}.  Finally, Conjecture \ref{cyclicityconjecture} has recently been considered in the more general context of elliptic curves over arbitrary number fields \cite{campagnastevenhagen}, in which case the question of vanishing of the conjectural density becomes more delicate.  

Inspired by Conjecture \ref{cyclicityconjecture}, Y. Akbal and A. M. G\"{u}lo$\breve{\text{g}}$lu \cite{akbalguloglu} considered the question of cyclicity of $\tilde{E}_p(\mbf_p)$ for the subset of those primes $p$ which lie in a fixed arithmetic progression (this question was also considered in the Ph.D. dissertation of J. Brau \cite{brau}).  Specifically, for any fixed $a, n \in \mbn$ with $\gcd(a,n) = 1$, let us define the counting function $\pi_{E,a,n}(x)$ by
\begin{equation} \label{defofpisubEanofx}
\pi_{E,a,n}(x) := \left| \left\{ p \leq x : p \nmid N_E, \, p \equiv a \bmod n \; \text{ and } \; \tilde{E}_p(\mbf_p) \; \text{ is cyclic} \right\} \right|.
\end{equation}
Akbal and G\"{u}lo$\breve{\text{g}}$lu proved the following theorem, wherein the constant $C_{E,a,n} \geq 0$ is given by
\begin{equation} \label{CsubEan}
C_{E,a,n} := \sum_{d = 1}^\infty \frac{\mu(d) \gamma_{a,n}(\mbq(E[d]))}{[ \mbq(E[d]) \mbq(\zeta_n) : \mbq]}.
\end{equation}
In this definition, $\zeta_n$ denotes a primitive $n$-th root of unity, and
\begin{equation} \label{defofgammasuban}
\gamma_{a,n}\left( \mbq(E[d]) \right) := 
\begin{cases}
1 & \text{ if } \gs_a \text{ fixes } \mbq(E[d]) \cap \mbq(\zeta_n) \text{ pointwise} \\
0 & \text{ otherwise,}
\end{cases}
\end{equation}
where $\gs_a \in \gal(\mbq(\zeta_n)/\mbq)$ refers to the unique automorphism satisfying $\gs_a(\zeta_n) = \zeta_n^a$. 
\begin{Theorem} \label{akbalgulogluthm} (\cite[Theorems 3 and 6]{akbalguloglu})  Let $E$ be an elliptic curve defined over $\mbq$ and assume that, if $E$ has CM, then it has CM by the full ring of integers of an imaginary quadratic field. Furthermore, fix $n \in \mbn$ and, for each square-free $d \geq 1$, assume the Generalized Riemann Hypothesis for the Dedekind zeta function of the field $\mbq(E[d])\mbq(\zeta_n)$.  For any $a \in \mbz$ with $\gcd(a,n) = 1$, we then have
\[
\pi_{E,a,n}(x) = C_{E,a,n} \Li(x) + O_{E,n}\left( x^{5/6} ( \log x )^{2/3} \right),
\]
where $\Li(x) := \int_2^x \frac{1}{\log t} dt$ denotes the logarithmic integral.
\end{Theorem}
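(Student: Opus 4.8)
The plan is to run the Hooley--Serre sieve argument for cyclicity, carrying the extra condition $p \equiv a \bmod n$ by enlarging the division fields to include $\mbq(\zeta_n)$. Since $\tilde{E}_p(\mbf_p) \cong \mbz/d_p\mbz \times \mbz/e_p\mbz$ with $d_p \mid e_p$, the group $\tilde{E}_p(\mbf_p)$ is cyclic if and only if $E[\ell] \not\subseteq \tilde{E}_p(\mbf_p)$ for every prime $\ell$, equivalently $p$ does not split completely in $\mbq(E[\ell])$ for any $\ell$. Inclusion--exclusion over square-free $d$ (valid since $E[d] \subseteq \tilde{E}_p(\mbf_p)$ forces $d^2 \mid \#\tilde{E}_p(\mbf_p)$, so only finitely many $d$ are relevant for each $p$) gives
\[
\pi_{E,a,n}(x) = \sum_{d=1}^{\infty} \mu(d)\, P_E(x;d,a,n), \qquad P_E(x;d,a,n) := \#\bigl\{ p \le x : p \nmid d N_E n,\ \frob_p|_{\mbq(E[d])} = \id,\ \frob_p|_{\mbq(\zeta_n)} = \gs_a \bigr\},
\]
and $P_E(x;d,a,n)$ is a Chebotarev count in the compositum $L_d := \mbq(E[d])\mbq(\zeta_n)$. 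An element $g \in \gal(L_d/\mbq)$ with $g|_{\mbq(E[d])} = \id$ and $g|_{\mbq(\zeta_n)} = \gs_a$ exists precisely when $\gs_a$ fixes $\mbq(E[d]) \cap \mbq(\zeta_n)$ pointwise, in which case it is unique and central in $\gal(L_d/\mbq)$; hence the ``expected'' value of $P_E(x;d,a,n)$ is $\gamma_{a,n}(\mbq(E[d]))\,\Li(x)/[L_d:\mbq]$, and summing these against $\mu(d)$ reproduces exactly $C_{E,a,n}\Li(x)$. Everything then reduces to bounding the error in this heuristic.

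One decomposes the $d$-sum at parameters $1 \le \xi_1 < \xi_2 \le \sqrt{x}+1$, to be optimised. For $d > \sqrt{x}+1$ the term vanishes, since $E[d] \subseteq \tilde{E}_p(\mbf_p)$ with $p \le x$ would force $d^2 \le \#\tilde{E}_p(\mbf_p) \le (\sqrt{p}+1)^2 \le (\sqrt{x}+1)^2$. For $d \le \xi_1$ one applies the GRH-conditional effective Chebotarev density theorem (Lagarias--Odlyzko, in Serre's form) to $L_d$, using $[L_d:\mbq] \ll_n d^4$ in general (and $\ll_n d^2$ in the CM case), a conductor--discriminant bound $\log|\disc L_d| \ll_{E,n} [L_d:\mbq]\log(dN_E)$, and that the relevant central class has relative size $1/[L_d:\mbq]$; this yields a per-$d$ error $\ll_{E,n} x^{1/2}\log(dxN_E)$, hence a total error over $d \le \xi_1$ of $\ll_{E,n} \xi_1\, x^{1/2}\log x$. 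Completing the main-term series beyond $\xi_1$ costs $\ll_{E,n} \Li(x)\sum_{d > \xi_1} [L_d:\mbq]^{-1} \ll_{E,n} \Li(x)/\xi_1$ (using $[L_d:\mbq] \gg_E d^2$). A suitable choice of $\xi_1$ — a small power of $x$ times a power of $\log x$ — balances these against the target $x^{5/6}(\log x)^{2/3}$.

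The remaining range $\xi_1 < d \le \sqrt{x}+1$ is where the division fields are far too large for effective Chebotarev and one must argue by congruences. Here $P_E(x;d,a,n) \le \#\{ p \le x : p \equiv 1 \bmod d,\ d^2 \mid p+1-a_p \}$, since $E[d] \subseteq \tilde{E}_p(\mbf_p)$ forces $d \mid p-1$ (Weil pairing) and $d^2 \mid \#\tilde{E}_p(\mbf_p) = p+1-a_p$; combined with $|a_p| < 2\sqrt{p}$, this confines $\#\tilde{E}_p(\mbf_p)/d^2$ to a short range once $d$ is not too small. Splitting once more at $\xi_2$ and combining Brun--Titchmarsh with large-sieve (or square-sieve) estimates for primes $p \le x$ subject to $p \equiv 1 \bmod d$ and $d^2 \mid p+1-a_p$ — uniformly in $d$ and unconditionally — one bounds the contribution of this range by $\ll_{E,n} x^{5/6}(\log x)^{2/3}$, the condition $p \equiv a \bmod n$ costing nothing essential. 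In the CM case the whole scheme is relativised to the CM field $K$: because $E$ has CM by the \emph{full} ring of integers $\mathcal{O}_K$, each $K(E[d])$ is described by ray class field theory over $K$, one separates primes according to whether they split or are inert in $K$, and the abelian structure of $\gal(K(E[d])/K)$ replaces the $\GL_2$-computations.

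The hard part will be the intermediate range: one needs unconditional, fully $d$-uniform upper bounds — of essentially best-possible strength — for the number of $p \le x$ with $p \equiv 1 \bmod d$ and $d^2 \mid p+1-a_p$, extracting savings from \emph{both} the congruence on $p$ and the congruence on the trace $a_p$, and sharp enough that after optimising $\xi_1$ and $\xi_2$ against the GRH-Chebotarev error and the tail of the series everything collapses to $x^{5/6}(\log x)^{2/3}$. A secondary difficulty is securing the required uniformity (in $d$, in $\disc L_d$, and in $n$) in the effective Chebotarev input, and, in the CM case, making the ray-class-field description of $K(E[d])$ explicit enough for the degree and discriminant bounds and the sieve estimates to go through — precisely what the hypothesis of CM by the full ring of integers provides.
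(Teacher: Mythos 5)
The paper does not contain a proof of this theorem: it is imported verbatim from Akbal and G\"ulo\u{g}lu via the citation \cite{akbalguloglu} and used as a black box, so there is no internal proof to compare your sketch against. That said, your outline is a faithful reconstruction of the Hooley--Serre--Cojocaru sieve architecture that the cited authors adapt. You correctly identify all the structural ingredients: M\"obius inversion over the complete-splitting conditions in $L_d := \mbq(E[d])\mbq(\zeta_n)$; the observation that the relevant Frobenius class is a single central element (trivial on $\mbq(E[d])$, equal to $\gs_a$ on $\mbq(\zeta_n)$), present iff $\gamma_{a,n}(\mbq(E[d]))=1$; the GRH-effective Chebotarev bound over an initial range of $d$, with the degree bound $[L_d:\mbq]\ll_n d^4$ (or $d^2$ for CM) and the conductor--discriminant estimate; the cut-off at $d\le \sqrt{x}+1$ via Hasse; and the passage through ray class fields of $K=\mathrm{End}(E)\otimes\mbq$ in the CM case, which is exactly why the hypothesis of CM by the \emph{maximal} order $\mc{O}_K$ is imposed. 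You also flag, rather than hand-wave past, the genuinely delicate step: the intermediate range, where one needs sharp unconditional and $d$-uniform bounds for primes $p\le x$ with $p\equiv 1\bmod d$ and $d^2\mid p+1-a_p$, extracting savings from both congruences, together with the final optimisation of the truncation parameters to produce the stated $x^{5/6}(\log x)^{2/3}$. Two small caveats: you introduce a second parameter $\xi_2$ but never explain how the ranges $(\xi_1,\xi_2]$ and $(\xi_2,\sqrt{x}+1]$ are handled differently, and the claim that the error term falls out of balancing the GRH-Chebotarev error against the tail should be checked against the precise form of the effective Chebotarev bound used (including the $\log d_{L_d}$ contribution), since a naive balance of $\xi_1 x^{1/2}\log x$ against the tail $x/\xi_1$ in the CM case tends to land at $x^{3/4}$ rather than $x^{5/6}$; the exponent $5/6$ reflects the weaker savings available in the middle range, which you have not quantified. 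None of this, however, is a misconception about the method --- it is a correct high-level account, with the hard analytic inputs identified but left to the cited reference.
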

In particular, under GRH, Akbal and G\"{u}lo$\breve{\text{g}}$lu proved that $\pi_{E,a,n}(x) \sim C_{E,a,n} \Li(x)$ as $x \to \infty$, provided the constant $C_{E,a,n}$ is positive.  Furthermore, they noted \cite[p. 3]{akbalguloglu} that
\begin{equation} \label{sufficientforfinitelymanyprimes}
\exists \text{ a prime } \ell \text{ such that } \mbq(E[\ell]) \subseteq \mbq(\zeta_n) \text{ and } \gs_a |_{\mbq(E[\ell])} \equiv 1 \; \Longrightarrow \; \lim_{x \to \infty} \pi_{E,a,n}(x) < \infty.
\end{equation}
(The left-hand condition above implies that $C_{E,a,n} = 0$, so the implication \eqref{sufficientforfinitelymanyprimes} is consistent with Theorem \ref{akbalgulogluthm}.)  They then posed the following question (see \cite[Question 1]{akbalguloglu}).
\begin{question} \label{akbalgulogluqn}
Let $E$ be an elliptic curve over $\mbq$ and assume the notation as above.  Is the converse of \eqref{sufficientforfinitelymanyprimes} true?
\end{question}
The purpose of this note is to answer Question \ref{akbalgulogluqn} in the negative via a concrete example (see Example \ref{mainthmspecial} and Theorem \ref{mainthm} below), and to propose the following biconditional analogue of \eqref{sufficientforfinitelymanyprimes}, giving a precise (conjectural) interpretation of when $\ds \lim_{x \to \infty}\pi_{E,a,n}(x) < \infty$ in terms of division fields of $E$:
\begin{equation} \label{refinedsufficientforfinitelymanyprimes}
\begin{pmatrix} \exists d \in \mbn_{\geq 2} \text{ such that } \forall \gs \in \gal\left(\mbq(E[d])\mbq(\zeta_n)/\mbq\right) \text{ with} \\ \gs |_{\mbq(\zeta_n)} = \gs_a, \, \exists \text{ a prime } \ell \mid d \text{ for which }  \gs |_{\mbq(E[\ell])} = 1 \end{pmatrix} \; \Longleftrightarrow \; \lim_{x \to \infty} \pi_{E,a,n}(x) < \infty.
\end{equation}
Note that the condition on the left-hand side of \eqref{sufficientforfinitelymanyprimes} implies the condition on the left-hand side of \eqref{refinedsufficientforfinitelymanyprimes}.  
In the following example, the former is false while the latter is true (with $d = 6$).
\begin{Example} \label{mainthmspecial}
Let $E$ be the (non-CM) elliptic curve over $\mbq$ defined by the Weierstrass equation
\[
E : \; y^2 + xy + y = x^3 + 32271697x - 1200056843302.
\]
The conductor of $E$ is $2 \cdot 3 \cdot 5 \cdot 7 \cdot 11 \cdot 31$; its LMFDB label is $71610.s6$ and its Cremona label is $71610s4$.  Furthermore, we have:
\begin{enumerate}
\item the counting function $\pi_{E,3,8}(x)$ defined by \eqref{defofpisubEanofx} satisfies $\pi_{E,3,8}(x) = 0$ for all $x \geq 0$;
\item for each prime $\ell$, $\mbq(E[\ell]) \nsubseteq \mbq(\zeta_8)$.
\end{enumerate}
\end{Example}
We remark that, for any elliptic curve $E$ over $\mbq$ that satisfies the left-hand condition in \eqref{sufficientforfinitelymanyprimes}, $\tilde{E}_p(\mbf_p)$ fails to be cyclic for good primes $p \equiv a \bmod{n}$ because there exists a prime $\ell$ such that $\mbz/\ell\mbz \times \mbz/\ell\mbz \subseteq \tilde{E}_p(\mbf_p)$ for every such $p$. In contrast, for the elliptic curve $E$ of Example \ref{mainthmspecial}, the reason that $\tilde{E}_p(\mbf_p)$ is not cyclic for any good prime $p \equiv 3 \bmod{8}$ is that, for any such prime, either $\mbz/2\mbz \times \mbz/2\mbz \subseteq \tilde{E}_p(\mbf_p)$ or $\mbz/3\mbz \times \mbz/3\mbz \subseteq \tilde{E}(\mbf_p)$.  For more details, see Remark \ref{exampleonepointfourremark} below.

We propose the following refinement of \cite[Conjecture 3]{akbalguloglu}.
\begin{Conjecture}
Let $E$ be an elliptic curve over $\mbq$ of conductor $N_E$ and let $a, n \in \mbn$ with $\gcd(a,n) = 1$.  Suppose that the condition on the left-hand side of \eqref{refinedsufficientforfinitelymanyprimes} does {\textbf{not}} hold. Then there are infinitely many primes $p \equiv a \bmod{n}$ for which $\tilde{E}_p(\mbf_p)$ is cyclic.  Furthermore, the constant $C_{E,a,n}$, defined by \eqref{CsubEan}, is positive, and we have
\begin{equation} \label{pisubEanasymptoticinconj}
\pi_{E,a,n}(x) \sim C_{E,a,n} \Li(x)
\end{equation}
as $x \to \infty$, where $\pi_{E,a,n}(x)$ is as in \eqref{defofpisubEanofx}.   
\end{Conjecture}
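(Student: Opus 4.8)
The plan is to prove, \emph{unconditionally}, that the negation of the left-hand condition of \eqref{refinedsufficientforfinitelymanyprimes} forces $C_{E,a,n} > 0$, and then to read off the remaining assertions from \thmref{akbalgulogluthm}. Indeed, once $C_{E,a,n} > 0$ is known, \thmref{akbalgulogluthm} (this is where GRH is used) gives $\pi_{E,a,n}(x) = C_{E,a,n}\Li(x) + O_{E,n}\!\left(x^{5/6}(\log x)^{2/3}\right)$; since $\Li(x) \asymp x/\log x$ dominates the error term, this is the asymptotic \eqref{pisubEanasymptoticinconj}, and it forces $\pi_{E,a,n}(x) \to \infty$, i.e.\ $\tilde{E}_p(\mbf_p)$ is cyclic for infinitely many primes $p \equiv a \bmod n$. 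So the heart of the matter is the positivity of $C_{E,a,n}$, which I would establish by adapting Serre's analysis of the vanishing of $C_E$ (the case $n = 1$) so as to carry the cyclotomic constraint encoded by $\gs_a$.

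First I would recast $C_{E,a,n}$ as a Haar measure. Put $L := \bigcup_{m \geq 1}\mbq(E[m])$ and $\mc{G} := \gal(L/\mbq)$ with normalized Haar measure $\mu_{\mc G}$; by the Weil pairing $\mbq(\zeta_n) \subseteq \mbq(E[n]) \subseteq L$. For a prime $\ell$ set $A_\ell := \{g \in \mc{G} : g|_{\mbq(E[\ell])} = \id\}$ and, for square-free $d$, $A_d := \bigcap_{\ell \mid d}A_\ell$; also set $H_a := \{g \in \mc{G} : g|_{\mbq(\zeta_n)} = \gs_a\}$, so $\mu_{\mc G}(H_a) = 1/\varphi(n)$. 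The set $H_a \cap A_d$ is empty exactly when $\gs_a$ does not fix $\mbq(E[d]) \cap \mbq(\zeta_n)$ pointwise, and is otherwise a coset of $\gal(L/\mbq(E[d])\mbq(\zeta_n))$; hence $\mu_{\mc G}(H_a \cap A_d) = \gamma_{a,n}(\mbq(E[d]))/[\mbq(E[d])\mbq(\zeta_n):\mbq]$, so $C_{E,a,n} = \sum_d \mu(d)\,\mu_{\mc G}(H_a \cap A_d)$. Because $[\mbq(E[d]):\mbq] \gg_\varepsilon d^{4-\varepsilon}$ in the non-CM case (and $\gg_\varepsilon d^{2-\varepsilon}$ in the CM case), one has $\sum_d \mu_{\mc G}(A_d) < \infty$, so I may interchange summation and integration; then $\sum_{d}\mu(d)\mathbf{1}_{A_d}(g) = \prod_{\ell}(1 - \mathbf{1}_{A_\ell}(g))$ for almost every $g$ (by Borel--Cantelli, a.e.\ $g$ lies in only finitely many $A_\ell$), and we obtain
\[
C_{E,a,n} \;=\; \mu_{\mc G}\!\left(\{\, g \in H_a : g \notin A_\ell \text{ for every prime } \ell \,\}\right).
\]

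Next I would factor this measure over primes. By Serre's open image theorem in the non-CM case (and by the classical description of the image in the normalizer of a Cartan subgroup in the CM case), there is an integer $m_0 = m_0(E) \geq 2$, which we may take divisible by $6$ and by $\rad(n)$, such that for every prime $\ell \nmid m_0$ the reduction $\bar\rho_{E,\ell}$ is surjective onto $\GL_2(\mbf_\ell)$ (resp.\ onto the relevant Cartan-type subgroup) and, moreover, $\gal(L/\mbq)$ decomposes as the direct product $\gal\!\left(\mbq(E[m_0^\infty])/\mbq\right) \times \prod_{\ell \nmid m_0}\gal\!\left(\mbq(E[\ell^\infty])/\mbq\right)$; the key inputs are that $\mbq(E[\ell^\infty]) \cap \mbq^{\mathrm{ab}} = \mbq(\zeta_{\ell^\infty})$ for $\ell \nmid m_0$ and that $\hat{\mbz}^\times = \prod_p \mbz_p^\times$ is itself a product over primes, so the determinant does not couple distinct $\ell$. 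Put $d_0 := \rad(m_0) \in \mbn_{\geq 2}$; then $\mbq(\zeta_n) \subseteq \mbq(E[m_0^\infty])$, and in the product decomposition $H_a$ and the $A_\ell$ with $\ell \mid d_0$ are pulled back from the first factor, while each $A_\ell$ with $\ell \nmid d_0$ is pulled back from the factor indexed by $\ell$, on which it has measure $1/|\GL_2(\mbf_\ell)|$. Fubini then gives
\[
C_{E,a,n} \;=\; \delta_0 \cdot \prod_{\ell \nmid d_0}\left(1 - \frac{1}{|\GL_2(\mbf_\ell)|}\right), \qquad \delta_0 := \mu\!\left(\{\, g : g|_{\mbq(\zeta_n)} = \gs_a,\ g|_{\mbq(E[\ell])} \neq \id\ \forall\, \ell \mid d_0 \,\}\right),
\]
the measure defining $\delta_0$ being on $\gal(\mbq(E[m_0^\infty])/\mbq)$ (with the Cartan-type group order replacing $|\GL_2(\mbf_\ell)|$ in the CM case). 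The infinite product is a strictly positive real number since each factor lies in $(0,1)$ and $\sum_\ell 1/|\GL_2(\mbf_\ell)| < \infty$; and $\delta_0 > 0$ because, applying the negation of the left side of \eqref{refinedsufficientforfinitelymanyprimes} with $d = d_0$ (permitted as $d_0 \geq 2$), there is $\sigma \in \gal(\mbq(E[d_0])\mbq(\zeta_n)/\mbq)$ with $\sigma|_{\mbq(\zeta_n)} = \gs_a$ and $\sigma|_{\mbq(E[\ell])} \neq \id$ for all $\ell \mid d_0$, and lifting $\sigma$ along the surjection $\gal(\mbq(E[m_0^\infty])/\mbq) \twoheadrightarrow \gal(\mbq(E[d_0])\mbq(\zeta_n)/\mbq)$ (note $\mbq(E[d_0])\mbq(\zeta_n) \subseteq \mbq(E[m_0^\infty])$) yields a full coset inside the event defining $\delta_0$. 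Hence $C_{E,a,n} > 0$, as required.

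The main obstacle is this second step: correctly identifying $m_0$ (equivalently $d_0$) and proving the clean direct-product decomposition of $\gal(L/\mbq)$. This requires careful control of the entanglement among the division fields and between them and $\mbq(\zeta_n)$ --- equivalently, of $\mbq(E[m_0^\infty]) \cap \mbq^{\mathrm{ab}}$ --- and, in the CM case, replacing Serre's surjectivity input with the structure of the image in the normalizer of a Cartan subgroup, while checking that the CM analogue of $\prod_\ell(1 - 1/|\GL_2(\mbf_\ell)|)$ still converges to a positive limit (its factors are then of size $1 - O(1/\ell^2)$). A further, genuine caveat is that the asymptotic \eqref{pisubEanasymptoticinconj}, and with it the infinitude of cyclic primes in the progression, remain conditional on GRH through \thmref{akbalgulogluthm}; an unconditional argument would require a substitute for the Chebotarev-type input underlying that theorem, which is why the statement is posed as a conjecture.
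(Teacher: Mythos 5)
Your proposal tracks the paper's own treatment of this Conjecture quite closely: both you and the paper reduce the provable content to the unconditional statement that $\neg$(LHS of \eqref{refinedsufficientforfinitelymanyprimes}) forces $C_{E,a,n} > 0$, and both then defer the asymptotic (and hence the infinitude of cyclic primes) to the GRH-conditional Theorem \ref{akbalgulogluthm}, exactly as in Remark \ref{exampleonepointfourremark}'s predecessor (the remark immediately following the Conjecture). For the unconditional core, your Haar-measure formulation and the paper's ``almost-multiplicative'' formulation in Section \ref{Csubansection} are two dressings of the same idea: the open image theorem guarantees a finite level $m_E$ (the $\GL_2$-level) beyond which the division fields decouple, from which $C_{E,a,n}$ factors as a finite quantity (the ``bad-prime'' contribution) times a convergent Euler product over the unramified primes; positivity of the infinite product is then automatic, and positivity of the finite factor is exactly the negation of the left-hand condition in \eqref{refinedsufficientforfinitelymanyprimes}. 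Where you write a direct-product decomposition $\gal(L/\mbq) \simeq \gal(\mbq(E[m_0^\infty])/\mbq) \times \prod_{\ell\nmid m_0}\GL_2(\mbz_\ell)$ and invoke Fubini, the paper instead proves Lemma \ref{almostmultiplicativelemma} --- a Goursat-type argument establishing $f(d_1d_2) = f(d_1)f(d_2)$ when $\gcd(d_1 m_E, d_2)=1$ --- and then applies \eqref{wheretostickf}. The paper's lemma is slightly sharper than what you need (it pins down multiplicativity at the \emph{minimal} level $m_E$, which the paper later exploits in Proposition \ref{acyclicitylemma} and Theorem \ref{secondthm}); for the Conjecture alone your coarser decomposition at any sufficiently divisible $m_0$ suffices. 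The ``main obstacle'' you flag --- proving the direct-product decomposition --- is in fact not a gap once $m_0$ is taken to absorb $\rad(m_E)$: since $\rho_E(G_\mbq)$ contains $\ker\bigl(\GL_2(\hat{\mbz}) \to \GL_2(\mbz/m_E\mbz)\bigr)$, which itself contains $\{1\}\times\prod_{\ell\nmid m_0}\GL_2(\mbz_\ell)$, the group splits off this factor directly; the CM analogue works the same way inside $\mc{N}_{\gd,\phi}(\hat{\mbz})$. In short, you have reproduced the paper's argument in a different but equivalent language and have correctly identified exactly which assertions remain GRH-conditional.
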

\begin{remark}
To clarify what is known and what is conjectured: given Theorem \ref{akbalgulogluthm}, the asymptotic formula \eqref{pisubEanasymptoticinconj} is already known to be a consequence of the GRH (assuming in the CM case that $E$ has CM by the maximal order $\mc{O}_K$).  We will show in Section \ref{Csubansection} that
\begin{equation*} 
\begin{pmatrix} \exists d \in \mbn_{\geq 2} \text{ such that } \forall \gs \in \gal\left(\mbq(E[d])\mbq(\zeta_n)/\mbq\right) \text{ with} \\ \gs |_{\mbq(\zeta_n)} = \gs_a, \, \exists \text{ a prime } \ell \mid d \text{ for which }  \gs |_{\mbq(E[\ell])} = 1 \end{pmatrix} \; \Longleftrightarrow \; C_{E,a,n} = 0.
\end{equation*}
The implication $C_{E,a,n} = 0 \; \Rightarrow \; \lim_{x \to \infty} \pi_{E,a,n}(x) < \infty$ is a straightforward application of the Chebotarev density theorem, and the reverse implication $\lim_{x \to \infty} \pi_{E,a,n}(x) < \infty \; \Rightarrow \; C_{E,a,n} = 0$ is conjectural, but is known conditionally on GRH via Theorem \ref{akbalgulogluthm} (with the above-mentioned proviso in the CM case).
\end{remark}

The elliptic curve $E$ in Example \ref{mainthmspecial} belongs to an infinite family associated to a genus zero modular curve.  Viewed another way, it is a specialization of an elliptic curve $\mathbb{E}$ defined over $\mbq(t,d)$, where $t$ and $d$ are variables, infinitely many of whose specializations satisfy the conditions (1) and (2) of Example \ref{mainthmspecial}, with an appropriately chosen $a \in \mbz$ and $n \in \mbn$ replacing $3$ and $8$, respectively.  Let the polynomials $f(t), g(t) \in \mbq[t]$ be defined by
\begin{equation} \label{defoffoft}
\begin{split}
f(t) &:= 16t^6 - 24t^4 - 8t^3 + 36t^2 + 6t + 1, \\
g(t) &:= 64t^8 + 64t^7 + 64t^6 - 128t^5 - 56t^4 + 16t^3 + 64t^2 - 8t + 1.
\end{split}
\end{equation}
Next, define the Weierstrass coefficients $a_4(t), a_6(t) \in \mbq(t)$ by 
\begin{equation} \label{defofa4anda6}
\begin{split}
a_4(t) &:= \frac{-108\left( 4t^3 - 1 \right)^3 \left( 4t^3 + 6t - 1 \right)^3 f(t)^3}{\left( 2t^2 + 2t - 1 \right)^2 \left( 4t^4 - 4t^3 + 6t^2 + 2t + 1 \right)^2 \left( 8t^4 - 8t^3 - 8t - 1 \right)^2 g(t)^2}, \\
a_6(t) &:= \frac{-432\left( 4t^3 - 1 \right)^3 \left( 4t^3 + 6t - 1 \right)^3 f(t)^3}{\left( 2t^2 + 2t - 1 \right)^2 \left( 4t^4 - 4t^3 + 6t^2 + 2t + 1 \right)^2 \left( 8t^4 - 8t^3 - 8t - 1 \right)^2 g(t)^2},
\end{split}
\end{equation}
where $f(t)$ and $g(t)$ are as in \eqref{defoffoft}. 
Furthermore, define the elliptic curve $\mb{E}$ over $\mbq(t,d)$ by the Weierstrass equation
\begin{equation} \label{defofEoft}
\mb{E} : \; y^2 = x^3 + d^2a_4(t) x + d^3a_6(t).
\end{equation}
The $j$-invariant $j_{\mb{E}}(t)$ and discriminant $\gD_{\mb{E}}(t)$ of $\mb{E}$ are given by
\begin{equation} \label{defofjofy}
\begin{split}
j_{\mb{E}}(t) &:= \frac{\left(4t^3 - 1\right)^3 \left(4t^3 + 6t - 1 \right)^3 f(t)^3}{t^3\left( t - 1 \right)^3 \left( 2t + 1 \right)^6 \left( t^2 + t + 1 \right)^3 \left( 4t^2 - 2t + 1 \right)^6}, \\
\gD_{\mb{E}}(t) &:= \frac{2^{18}3^{12}d^6t^3(t-1)^3(2t+1)^6(t^2+t+1)^3(4t^2-2t+1)^6(4t^3-1)^{6}(4t^3+6t-1)^{6}f(t)^{6}}{\left( 2t^2 + 2t - 1 \right)^{6} \left( 4t^4 - 4t^3 + 6t^2 + 2t + 1 \right)^{6} \left( 8t^4 - 8t^3 - 8t - 1 \right)^{6} g(t)^{6}}.
\end{split}
\end{equation}
Let $h_2(t), h_3(t) \in \mbq(t)$ be defined by
\begin{equation} \label{defofh2andh3}
    \begin{split}
        h_2(t) &:= t(t-1)(t^2+t+1), \\
        h_3(t) &:= \frac{6(4t^3-1)(4t^3+6t-1)(2t^2+2t-1)f(t)g(t)}{(4t^4-4t^3+6t^2+2t+1)(8t^4-8t^3-8t-1)},
    \end{split}
\end{equation}
where $f(t)$ and $g(t)$ are as in \eqref{defoffoft}.
A computation involving the appropriate division polynomials associated to $\mbe$ demonstrates that
\begin{equation} \label{divisionfieldsat2and3general}
    \mbq(t,d)\left( \mbe[2] \right) = \mbq(t,d)\left(\sqrt{h_2(t)}\right), \qquad
    \mbq(t,d)\left( \mbe[3] \right) = \mbq(t,d)\left(\sqrt{dh_3(t)}, \sqrt{-3} \right).
\end{equation}

For any $t_0 \in \mbq - \{ 0, 1, -1/2 \}$ and $d_0 \in \mbq - \{ 0 \}$, we may consider the specialization $\mbe_{t_0,d_0}$ of $\mbe$ at $(t_0,d_0)$, which is an elliptic curve over $\mbq$.  Furthermore, \eqref{divisionfieldsat2and3general} specializes to
\begin{equation} \label{divisionfieldsat2and3special}
\mbq(\mbe_{t_0,d_0}[2]) = \mbq\left(\sqrt{h_2(t_0)}\right), \qquad
    \mbq(\mbe_{t_0,d_0}[3]) = \mbq\left(\sqrt{d_0h_3(t_0)}, \sqrt{-3} \right).
\end{equation}
\sloppy In particular, we see that $\mbq(\mbe_{t_0,d_0}[3])$ is either equal to $\mbq(\sqrt{-3})$ or is a biquadratic extension of $\mbq$ which contains $\mbq(\sqrt{-3})$ as a subfield.  In the latter case, $\mbq(\mbe_{t_0,d_0}[3])$ contains three quadratic subfields, exactly two of which are ramified at the prime $3$; in either case let us denote by
\[
n_3(t_0,d_0) := 
\begin{cases}
d_0h_3(t_0) & \text{ if $3$ is unramified in } \mbq\left( \sqrt{d_0 h_3(t_0)} \right), \\
-3d_0h_3(t_0) & \text{ otherwise.}
\end{cases}
\]
Thus, $\{ \mbq(\sqrt{n_3(t_0,d_0)} ), \mbq(\sqrt{-3}), \mbq( \sqrt{-3n_3(t_0,d_0)} ) \}$ is the set of all quadratic subfields of $\mbq(\mbe_{t_0,d_0}[3])$ and 
\[
\left\{ \mbq\left( \sqrt{-3} \right), \mbq\left( \sqrt{-3n_3(t_0,d_0)} \right) \right\}
\]
is the subset of those quadratic subfields that are ramified at $3$.  Finally, we define $n_0 \in \mbn$ by
\begin{equation} \label{defofnsub0}
    n_0 := 
        \lcm \left( \left| \disc \mbq\left( \sqrt{-3h_2(t_0)} \right) \right|, \left| \disc \mbq\left( \sqrt{-3n_3(t_0,d_0)h_2(t_0)} \right) \right| \right).
\end{equation}
Note that, in case $\mbq\left( \mbe_{t_0,d_0}[3] \right) = \mbq(\sqrt{-3})$, we simply have $\mbq\left( \sqrt{-3n_3(t_0,d_0)} \right) = \mbq(\sqrt{-3})$ and $n_0 = \left| \disc \mbq\left( \sqrt{-3h_2(t_0)} \right) \right|$.  In either case, $n_0$ is the smallest positive integer for which the containment
\begin{equation} \label{twoquadraticfieldcontainments}
\mbq\left( \sqrt{-3h_2(t_0)}, \sqrt{-3n_3(t_0,d_0)h_2(t_0)} \right) \subseteq \mbq\left( \zeta_{n_0} \right)
\end{equation}
holds.  We will prove the following theorem.
\begin{Theorem} \label{mainthm}
Let the elliptic curve $\mb{E}$ over $\mbq(t,d)$ be given by \eqref{defofEoft} and let $h_2(t), h_3(t) \in \mbq(t)$ be as in \eqref{defofh2andh3}, so that \eqref{divisionfieldsat2and3general} holds. \\

\noindent (a) For any $t_0 \in \mbq - \{ 0, 1, -1/2 \}$ and $d_0 \in \mbq - \{ 0 \}$, the specialization $\mbe_{t_0,d_0}$ of $\mbe$ at $(t_0,d_0)$ is an elliptic curve over $\mbq$.  For any elliptic curve $E$ defined over $\mbq$ satisfying $j_E \notin \{ 0, 1728 \}$, $E$ satisfies
\[
[\mbq(E[2]) : \mbq] \leq 2 \quad \text{ and } \quad [\mbq(E[3]) : \mbq] \leq 4
\]
if and only if $E$ is isomorphic over $\mbq$ to a specialization $\mbe_{t_0,d_0}$ for some $t_0 \in \mbq - \{ 0, 1, -1/2 \}$ and $d_0 \in \mbq - \{ 0 \}$. \\

\noindent (b) Suppose $t_0 \in \mbq - \{ 0, 1, -1/2 \}$ and $d_0 \in \mbq - \{ 0 \}$ are chosen so that
\begin{equation} \label{conditiononQE2}
\mbq \left( \mbe_{t_0,d_0}[2] \right) \not\subseteq \mbq\left( \mbe_{t_0,d_0}[3] \right).
\end{equation}
Let $\left\{ \mbq\left( \sqrt{-3} \right), \mbq\left( \sqrt{-3n_3(t_0,d_0)} \right) \right\}$ denote the set consisting of every quadratic subfield of $\mbq(\mbe_{t_0,d_0}[3])$ in which the prime $3$ ramifies.  Define $n_0 \in \mbn$ by \eqref{defofnsub0} (note that \eqref{twoquadraticfieldcontainments} then holds) and let $a_0 \in \mbz$ be any integer coprime to $n_0$ such that the automorphism $\gs_{a_0} \in \gal\left( \mbq(\zeta_{n_0})/\mbq\right)$ satisfies
\begin{equation} \label{defofasub0}
\gs_{a_0}\left( \sqrt{-3h_2(t_0)} \right) = - \sqrt{-3h_2(t_0)} \; \text{ and } \; \gs_{a_0}\left( \sqrt{-3n_3(t_0,d_0)h_2(t_0)} \right) = - \sqrt{-3n_3(t_0,d_0)h_2(t_0)}.
\end{equation}
(Note that, by \eqref{conditiononQE2}, neither $\sqrt{-3h_2(t_0)}$ nor $\sqrt{-3n_3(t_0,d_0)h_2(t_0)}$ are in $\mbq$.) We then have that, for each prime $p \geq 5$ of good reduction for $\mbe_{t_0,d_0}$ with $p \equiv a_0 \bmod n_0$, the group $(\widetilde{\mbe_{t_0,d_0}})_p(\mbf_p)$ is not cyclic. In particular, for each $x \geq 0$, we have $\pi_{\mbe_{t_0,d_0},a_0,n_0}(x) \leq 2$. \\

\noindent (c) Suppose further that $t_0 \in \mbq - \{ 0, 1, -1/2 \}$ and $d_0 \in \mbq - \{ 0 \}$ are chosen so that $3$ ramifies in $\mbq\left( \sqrt{h_2(t_0)} \right)$ and $5$ does not ramify in $\mbq \left( \sqrt{-3h_2(t_0)}, \sqrt{-3n_3(t_0,d_0)h_2(t_0)} \right)$.  Then, for each prime $\ell$, we have $\mbq(\mb{E}_{t_0,d_0}[\ell]) \nsubseteq \mbq(\zeta_{n_0})$.
\end{Theorem}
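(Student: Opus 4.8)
The three parts are essentially independent; I would treat them in order.

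\emph{Part (a).} This is really a statement about a modular curve. First I would translate the degree bounds into conditions on the mod-$2$ and mod-$3$ images: $[\mbq(E[2]):\mbq]\le 2$ holds iff $\ov{\rho}_{E,2}(G_\mbq)$ has order at most $2$ in $\GL_2(\mbf_2)\cong S_3$, i.e. iff $E$ has a rational point of order $2$; and $[\mbq(E[3]):\mbq]\le 4$ holds iff $\ov{\rho}_{E,3}(G_\mbq)$ is conjugate to a subgroup of $\GL_2(\mbf_3)$ of order at most $4$, which a short group-theoretic check -- using that $\det\ov{\rho}_{E,3}$ is surjective since $\mbq(\zeta_3)\subseteq\mbq(E[3])$ -- shows is forced to lie in the order-$4$ split Cartan subgroup of $\GL_2(\mbf_3)$. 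The two bounds together therefore say exactly that $E$, with appropriate level structure, gives a non-cuspidal $\mbq$-point of the (geometrically connected) fibre product $X:=X_0(2)\times_{X(1)}X_{\mathrm{sp}}(3)$ of level $6$, and I would check via Riemann--Hurwitz (or the standard tables) that $X$ has genus $0$ with a $\mbq$-point, hence $X\cong\mbp^1_\mbq$, yielding the coordinate $t$. Since these level structures are not rigid and quadratic twisting fixes the mod-$2$ image and keeps the mod-$3$ image inside the split Cartan (as $\pm I$ lies in it), a $\mbq$-point of $X$ determines $E$ only up to quadratic twist -- the role of the parameter $d$. Finally, matching $j$-invariants through \eqref{defofjofy} and checking \eqref{divisionfieldsat2and3general} confirms that $\mbe$ is precisely this universal family, with $t_0\in\{0,1,-1/2\}$ and $d_0=0$ the degenerate fibres. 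The conceptual content is routine; the real work is this last verification, best done with a computer algebra system and cross-checked against the Rouse--Sutherland--Zureick-Brown and Sutherland--Zywina databases.

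\emph{Part (b).} I would use the standard fact that, for a prime $p\ge 5$ of good reduction, $(\widetilde{\mbe_{t_0,d_0}})_p(\mbf_p)$ fails to be cyclic exactly when $p$ splits completely in some division field $\mbq(\mbe_{t_0,d_0}[\ell])$. By \eqref{divisionfieldsat2and3special} we have $\mbq(\mbe_{t_0,d_0}[2])=\mbq(\sqrt{h_2(t_0)})$ and $\mbq(\mbe_{t_0,d_0}[3])=\mbq(\sqrt{d_0h_3(t_0)},\sqrt{-3})$, and \eqref{conditiononQE2} forces $\mbq(\sqrt{h_2(t_0)})$ to be a genuine quadratic field distinct from every quadratic subfield of $\mbq(\mbe_{t_0,d_0}[3])$, so an $a_0$ as in \eqref{defofasub0} exists. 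For $p\equiv a_0\bmod n_0$, $\frob_p$ acts on $\mbq(\zeta_{n_0})$ as $\gs_{a_0}$, so \eqref{defofasub0} together with the compatibility of $\gs_{a_0}$ with the Kronecker symbol on quadratic subfields gives $\left(\tfrac{-3h_2(t_0)}{p}\right)=\left(\tfrac{-3n_3(t_0,d_0)h_2(t_0)}{p}\right)=-1$, whence $\left(\tfrac{n_3(t_0,d_0)}{p}\right)=1$. If $\left(\tfrac{-3}{p}\right)=1$ then $\left(\tfrac{h_2(t_0)}{p}\right)=-1$ but, unwinding the definition of $n_3(t_0,d_0)$, also $\left(\tfrac{d_0h_3(t_0)}{p}\right)=1$, so $p$ splits completely in $\mbq(\mbe_{t_0,d_0}[3])$ and $\mbz/3\mbz\times\mbz/3\mbz\hookrightarrow(\widetilde{\mbe_{t_0,d_0}})_p(\mbf_p)$; if $\left(\tfrac{-3}{p}\right)=-1$ then $\left(\tfrac{h_2(t_0)}{p}\right)=1$, so $p$ splits completely in $\mbq(\mbe_{t_0,d_0}[2])$ and $\mbz/2\mbz\times\mbz/2\mbz\hookrightarrow(\widetilde{\mbe_{t_0,d_0}})_p(\mbf_p)$. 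Either way the group is non-cyclic; the only good primes $\equiv a_0\bmod n_0$ left uncovered are $p\in\{2,3\}$, giving $\pi_{\mbe_{t_0,d_0},a_0,n_0}(x)\le 2$. This part presents no real difficulty.

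\emph{Part (c).} First I would pin down the small prime factors of $n_0$: since $3$ ramifies in $\mbq(\sqrt{h_2(t_0)})$ the $3$-adic valuation of the squarefree part of $h_2(t_0)$ is odd, while that of $n_3(t_0,d_0)$ is even by construction, so both $-3h_2(t_0)$ and $-3n_3(t_0,d_0)h_2(t_0)$ have even $3$-valuation; hence $3$ is unramified in the two quadratic fields whose discriminants define $n_0$, i.e. $3\nmid n_0$, and the second hypothesis gives $5\nmid n_0$. Now fix a prime $\ell$. For $\ell=2$, $\mbq(\mbe_{t_0,d_0}[2])=\mbq(\sqrt{h_2(t_0)})$ ramifies at $3$ while $\mbq(\zeta_{n_0})$ does not, so $\mbq(\mbe_{t_0,d_0}[2])\nsubseteq\mbq(\zeta_{n_0})$; for $\ell=3$, $\mbq(\mbe_{t_0,d_0}[3])\supseteq\mbq(\sqrt{-3})$ ramifies at $3$, with the same conclusion. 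For $\ell\ge 5$, $\mbq(\mbe_{t_0,d_0}[\ell])\supseteq\mbq(\zeta_\ell)$ ramifies at $\ell$, so if $\ell\nmid n_0$ we are done (covering $\ell=5$, as $5\nmid n_0$, and all $\ell\nmid n_0$). The remaining case is a prime $\ell\ge 7$ with $\ell\mid n_0$: if $\mbq(\mbe_{t_0,d_0}[\ell])\subseteq\mbq(\zeta_{n_0})$ then $\ov{\rho}_{\mbe_{t_0,d_0},\ell}(G_\mbq)$ is abelian, has surjective determinant, and contains the image of a complex conjugation, which has the distinct eigenvalues $\pm 1$; an abelian subgroup of $\GL_2(\mbf_\ell)$ containing such an element lies in a split Cartan subgroup, so $\mbe_{t_0,d_0}$ would define a non-cuspidal $\mbq$-point of the split-Cartan modular curve $X_{\mathrm{sp}}(\ell)$ (equivalently, of $X_0(\ell^2)$). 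The main obstacle is precisely this last case: for $\ell\ge 7$ one must invoke the (known but nontrivial) determination of the rational points of $X_0(\ell^2)$ -- via Mazur's theorem on rational isogenies and the study of the split-Cartan curves -- to see that no such non-cuspidal point exists, and, if the ambient statement is phrased only for non-CM curves, to dispose separately of the finitely many CM specializations of $\mbe$ (at most $11$, since $j\notin\{0,1728\}$) by a direct check. Assembling the cases gives $\mbq(\mbe_{t_0,d_0}[\ell])\nsubseteq\mbq(\zeta_{n_0})$ for every prime $\ell$, completing (c).
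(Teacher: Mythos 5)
Your parts (a) and (b) follow essentially the same route as the paper: part (a) translates the degree bounds into the conditions $\rho_{E,2}(G_\mbq)\,\dot\subseteq\,B(2)$ and $\rho_{E,3}(G_\mbq)\,\dot\subseteq\,\mathcal{C}_s(3)$, forms the level-$6$ fibre product, checks genus zero with a rational point, and matches $j$-invariants; part (b) uses Cojocaru--Murty's criterion (Lemma~\ref{acyclicityofellipticcurvemoduloprime}) and reads \eqref{defofasub0} as the Kronecker-symbol conditions $\left(\tfrac{-3h_2(t_0)}{p}\right)=\left(\tfrac{-3n_3h_2(t_0)}{p}\right)=-1$, splitting on the value of $\left(\tfrac{-3}{p}\right)$ to conclude $p$ splits completely in $\mbq(E[2])$ or in $\mbq(E[3])$. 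Both are correct and match the paper.

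Part (c) is where you genuinely diverge. The paper disposes of \emph{all} primes $\ell\notin\{2,3,5\}$ at once by citing Gonz\'{a}lez-Jim\'{e}nez--Lozano-Robledo (Theorem~\ref{abeliandivisionfield}), which classifies the $d$ for which $\gal(\mbq(E[d])/\mbq)$ can be abelian; since $\mbq(\zeta_{n_0})$ is abelian, containment $\mbq(E[\ell])\subseteq\mbq(\zeta_{n_0})$ is impossible for $\ell\geq 7$, and the remaining cases $\ell\in\{2,3,5\}$ are handled by the ramification-at-$3$ and ramification-at-$5$ observations exactly as you do. You instead split on whether $\ell\mid n_0$: for $\ell\geq 5$ with $\ell\nmid n_0$ you use ramification at $\ell$ (same as the paper), and for $\ell\geq 7$ with $\ell\mid n_0$ you argue that $\rho_{E,\ell}(G_\mbq)$ is abelian with surjective determinant and contains the image of complex conjugation (eigenvalues $\pm1$, distinct for $\ell\geq 3$), forcing the image into a split Cartan and thus producing a non-cuspidal $\mbq$-point on $X_{\mathrm{sp}}(\ell)\cong X_0(\ell^2)$, which Mazur/Kenku rule out. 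This works, and is in effect re-deriving the relevant special case of the theorem the paper cites (whose proof itself rests on Mazur/Kenku). The trade-off is that your route is longer and, as you yourself flag, needs a separate treatment of CM specializations: since the statement is \emph{not} restricted to non-CM curves and twisting changes division fields, you cannot literally reduce to finitely many curves but rather to finitely many CM $j$-invariants and a twist-uniform argument (e.g.\ that for CM by $K$ and $\ell\geq 7$, $\gal(\mbq(E[\ell])/\mbq)$ is generalized dihedral and hence non-abelian). The paper's citation sidesteps all of this cleanly. Net: correct, but heavier machinery than needed given the available literature.
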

Before proving Theorem \ref{mainthm}, we first observe that there exist infinitely many values $t_0 \in \mbq - \{ 0, 1, -1/2 \}$ for which there exists $d_0 \in \mbq - \{ 0 \}$ such that the specialized curve $\mbe_{t_0,d_0}$ satisfies the conditions of parts (b) and (c), and thus infinitely many $j$-invariants corresponding to elliptic curves $E$ over $\mbq$ that answer Question \ref{akbalgulogluqn} in the negative.
\begin{cor}
There are infinitely many $j_E \in \mbq$, where each $j_E$ is the $j$-invariant of an elliptic curve $E$ over $\mbq$ for which there exist $a \in \mbz$ and $n \in \mbn$ such that, for each $x \geq 0$, $\pi_{E,a,n}(x) \leq 2$ in spite of the fact that, for each prime $\ell$, $\mbq(E[\ell]) \not\subseteq \mbq(\zeta_n)$. 
\end{cor}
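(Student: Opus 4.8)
The plan is to produce, by an explicit congruence construction, infinitely many admissible pairs $(t_0,d_0)$ to which both parts (b) and (c) of Theorem \ref{mainthm} apply, and then to invoke the non-constancy of the rational function $j_{\mbe}(t)$ of \eqref{defofjofy} to conclude that the resulting curves $\mbe_{t_0,d_0}$ realize infinitely many distinct $j$-invariants. First I would fix $t_0 = 3m$ with $m \in \mbn$, $m \geq 1$, and $3 \nmid m$ (so in particular $t_0 \notin \{0,1,-1/2\}$). Then $h_2(3m) = 3m(3m-1)(9m^2+3m+1)$, whose two trailing factors are coprime to $3$, so $v_3(h_2(t_0)) = 1$; hence $h_2(t_0)$ is not a square, so $\mbq(\mbe_{t_0,d_0}[2]) = \mbq(\sqrt{h_2(t_0)})$ is a genuine quadratic field in which $3$ ramifies --- the first ramification hypothesis of part (c). Since also $h_2(3m) > 0$ for $m \geq 1$, this field is \emph{real} quadratic, hence contained in no imaginary quadratic field.

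Next I would collapse the $3$-division field by setting $d_0 := h_3(t_0)$; this is legitimate except for the finitely many $m$ at which $h_3(3m)$ vanishes or is undefined, or at which a denominator of $a_4(t)$, $a_6(t)$, or $\gD_{\mbe}(t)$ vanishes. Then $d_0 h_3(t_0) = h_3(t_0)^2$ is a square, so \eqref{divisionfieldsat2and3special} gives $\mbq(\mbe_{t_0,d_0}[3]) = \mbq(\sqrt{-3})$ and $n_3(t_0,d_0) = h_3(t_0)^2$ is also a square; consequently $\mbq\bigl(\sqrt{-3n_3(t_0,d_0)h_2(t_0)}\bigr) = \mbq\bigl(\sqrt{-3h_2(t_0)}\bigr)$ and $n_0 = \bigl|\disc \mbq(\sqrt{-3h_2(t_0)})\bigr|$. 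Hypothesis \eqref{conditiononQE2} of part (b) now reads $\mbq(\sqrt{h_2(t_0)}) \not\subseteq \mbq(\sqrt{-3})$, which holds by the preceding paragraph, and the compositum appearing in the hypothesis of part (c) is simply the imaginary quadratic field $\mbq(\sqrt{-3h_2(t_0)})$ (note $-3h_2(t_0) < 0$). To make $5$ unramified there I would impose the further congruence $m \not\equiv 0,2 \pmod 5$, which forces $5 \nmid h_2(3m)$: the two linear factors are nonzero mod $5$ precisely under this condition, and the quadratic factor $9m^2+3m+1$ has no root mod $5$ (its discriminant is a nonsquare mod $5$). Finally, an integer $a_0$ coprime to $n_0$ with $\gs_{a_0}$ as in \eqref{defofasub0} exists because $n_0$ is the conductor of $\mbq(\sqrt{-3h_2(t_0)})$, so the restriction $\gal(\mbq(\zeta_{n_0})/\mbq) \twoheadrightarrow \gal(\mbq(\sqrt{-3h_2(t_0)})/\mbq)$ is onto, and both conditions of \eqref{defofasub0} reduce to $\gs_{a_0}$ acting nontrivially on that single quadratic field.

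Assembling these choices, for every $m \in \mbn$ with $m \geq 1$, $3 \nmid m$, $m \not\equiv 0,2 \pmod 5$, and $m$ outside a fixed finite exceptional set, the pair $(t_0,d_0) = (3m, h_3(3m))$ satisfies all hypotheses of Theorem \ref{mainthm}(b),(c); part (b) then supplies $a_0$ with $\pi_{\mbe_{t_0,d_0},a_0,n_0}(x) \leq 2$ for all $x \geq 0$, and part (c) gives $\mbq(\mbe_{t_0,d_0}[\ell]) \not\subseteq \mbq(\zeta_{n_0})$ for every prime $\ell$. Since $j_{\mbe}(t) \in \mbq(t)$ is non-constant, the map $t_0 \mapsto j_{\mbe}(t_0)$ has finite fibres over $\mbq$, so this infinite set of $t_0 = 3m$ yields infinitely many distinct rational numbers $j_E := j_{\mbe}(t_0)$, each the $j$-invariant of the elliptic curve $E = \mbe_{t_0,d_0}$ over $\mbq$ with the stated properties (with $a = a_0$, $n = n_0$). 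That is the corollary.

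The step I expect to be the main obstacle is getting the twisting parameter $d_0$ to do three jobs at once: it must render the $3$-division field controllable while keeping \eqref{conditiononQE2} valid and keeping $5$ unramified in the compositum of part (c). The choice $d_0 = h_3(t_0)$ achieves all three by forcing $\mbq(\mbe_{t_0,d_0}[3]) = \mbq(\sqrt{-3})$, but one then has to confirm that only finitely many $m$ are lost to vanishing denominators or to $h_3(3m) = 0$, and to carry out the short $3$-adic valuation and mod-$5$ computations; the remainder of the argument is routine.
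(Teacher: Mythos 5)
Your proof is correct, and it takes a genuinely different route from the paper's. The paper chooses $t_0 = 21(15u_0 - 1)$ with $u_0 \in \mbq \cap \mbz_3 \cap \mbz_5 \cap \mbz_7$, which forces $v_3(h_2(t_0)) = v_7(h_2(t_0)) = 1$ and $v_5(h_2(t_0)) = 0$; it then picks $d_0$ so that $v_3(d_0 h_3(t_0)) = v_5(d_0 h_3(t_0)) = v_7(d_0 h_3(t_0)) = 0$, and uses the auxiliary prime $7$ (ramified in $\mbq(\mbe_{t_0,d_0}[2])$ but not in $\mbq(\mbe_{t_0,d_0}[3])$) as the witness for \eqref{conditiononQE2}. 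You instead take $t_0 = 3m$ with $3 \nmid m$ and $m \not\equiv 0, 2 \pmod 5$, which handles the ramification of $3$ and the non-ramification of $5$ by direct valuation computations, and you collapse the $3$-division field to $\mbq(\sqrt{-3})$ by setting $d_0 := h_3(t_0)$; \eqref{conditiononQE2} then follows for the cheapest possible reason, namely that $h_2(3m) > 0$ makes $\mbq(\mbe_{t_0,d_0}[2])$ real quadratic while $\mbq(\sqrt{-3})$ is imaginary. Your approach is somewhat more economical (it uses only the primes $3$ and $5$, not also $7$) and has the virtue of making the ``infinitely many $j$-invariants'' step explicit via non-constancy of $j_{\mbe}(t)$ and finite fibres, a point the paper's proof leaves implicit after constructing infinitely many suitable $t_0$. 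The mod-$5$ verification (both linear factors of $h_2(3m)$ are nonzero for $m \not\equiv 0, 2 \pmod 5$, and the quadratic factor $9m^2 + 3m + 1$ has discriminant $-27 \equiv 3 \pmod 5$, a nonresidue) and the observation that $n_3(t_0, h_3(t_0)) = h_3(t_0)^2$ is a square are both correct; the finite exceptional set of $m$ lost to vanishing denominators or $h_3(3m) = 0$ is genuinely finite since $h_3$ and the Weierstrass data are rational functions of $t$.
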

\begin{proof}
A straightforward calculation shows that, if $u_0 \in \mbq \cap \mbz_3 \cap \mbz_5 \cap \mbz_7$ and $t_0 := 21(15u_0 - 1)$, then the $3$-adic, $5$-adic and $7$-adic valuations of $h_2(t_0) = t_0 ( t_0-1 ) (t_0^2 + t_0 + 1)$ are given by
\[
v_3\left( h_2(t_0) \right) = 1, \qquad
v_5\left( h_2(t_0) \right) = 0, \qquad
v_7\left( h_2(t_0) \right) = 1.
\]
\sloppy Thus, $3$ and $7$ are ramified and $5$ is unramified in $\mbq\left( \sqrt{h_2(t_0)} \right)$, and in particular this implies that
\begin{equation} \label{5isunramified}
    \text{$5$ is unramified in the field $\mbq\left( \sqrt{-3h_2(t_0)} \right)$.}
\end{equation}  

Next, choosing $d_0 \in \mbq$ so that
\[
v_3\left( d_0 h_3(t_0) \right) = 0, \quad
v_5\left( d_0 h_3(t_0) \right) = 0, \quad
v_7\left( d_0 h_3(t_0) \right) = 0,
\]
the primes $3$, $5$ and $7$ are unramified in $\mbq\left( \sqrt{d_0h_3(t_0)}\right)$.  It follows from this and \eqref{divisionfieldsat2and3special} that $7$ is unramified in $\mbq\left( \mbe_{t_0,d_0}[3] \right)$ and ramified in $\mbq\left( \mbe_{t_0,d_0}[2]\right)$, and so \eqref{conditiononQE2} holds.  Furthermore, we have that $\mbq\left(\sqrt{-3n_3(t_0,d_0)}\right) = \mbq\left( \sqrt{-3d_0h_3(t_0)} \right)$, and this field is evidently unramified at $5$. It follows from this and \eqref{5isunramified} that $5$ is unramified in the field $\mbq\left( \sqrt{-3h_2(t_0)}, \sqrt{-3n_3(t_0,d_0)h_2(t_0)} \right)$.  
Thus, the conditions of parts (b) and (c) of Theorem \ref{mainthm} are satisfied, and so the specialized curve $E := \mbe_{t_0,d_0}$, together with the numbers $a_0 \in \mbz$ and $n_0 \in \mbn$ as described in part (b), furnish an example with $\pi_{E,a_0,n_0}(x) \leq 2$ for every $x \geq 0$ even though $\mbq(E[\ell]) \not\subseteq \mbq(\zeta_{n_0})$ for every prime $\ell$.
\end{proof}
\begin{remark} \label{exampleonepointfourremark}
Taking $t_0 = 3/5$ and $d_0 = -28910265879522405941333082$, we see that the specialization $\mbe_{t_0,d_0}$ is isomorphic over $\mbq$ to the elliptic curve $E$ of Example \ref{mainthmspecial}.  For this specialization, we have
\[
\mbq\left(E[2]\right) = \mbq\left( \sqrt{h(t_0)} \right) = \mbq(\sqrt{-6}), \quad \mbq\left(E[3]\right) = \mbq\left( \sqrt{d_0h_3(t_0)},\sqrt{-3} \right) = \mbq(\sqrt{-3}).
\]
In particular, $\mbq(E[2]) \not\subseteq \mbq(E[3])$.  Furthermore, taking $n = 8$ and $a = 3$, we see that $\mbq(E[6]) \cap \mbq(\zeta_8) = \mbq(\sqrt{2})$, and  \eqref{defofasub0} simply becomes
\[
\gs_3( \sqrt{2} ) = -\sqrt{2}.
\]
Thus, any $\gs \in \gal\left( \mbq(E[6])/\mbq \right)$ whose restriction to $\mbq(E[6]) \cap \mbq(\zeta_8)$ agrees with that of $\gs_3$ must act trivially either on $\mbq(E[2])$ or on $\mbq(E[3])$, so the left-hand condition in \eqref{refinedsufficientforfinitelymanyprimes} holds with $d=6$, $n = 8$ and $a=3$.  Since $2$ and $3$ divide $N_E$, we have that $\pi_{E,3,8}(x) = 0$ for all $x \geq 0$ (similar reasoning shows that $\pi_{E,5,8}(x) = 0$).  Finally, $3$ ramifies in $\mbq(E[2])$ and $5$ does not ramify in $\mbq(E[6])$, so by part (c) of Theorem \ref{mainthm}, we have $\mbq(E[d]) \not\subseteq \mbq(\zeta_8)$ for each $d \geq 2$.
\end{remark}
\begin{remark}
For any elliptic curve $E$ over $\mbq$ and $a, n \in \mbn$ with $\gcd(a,n) = 1$, Theorems 1 and 2 of \cite{akbalguloglu} give, for any $A \geq 2$, an unconditional lower bound
\begin{equation} \label{lowerbound}
\frac{x}{(\log x)^A} \ll \pi_{E,a,n}(x),
\end{equation}
provided $\gcd(a-1,n)$ is a power of two and none of the following three properties is fulfilled:
\begin{enumerate}
    \item The containment $\mbq(E[2]) \subseteq \mbq(\zeta_n)$ holds and $\gs_a$ fixes $\mbq(E[2])$ pointwise.
    \item The containments $\mbq \subsetneq \mbq(\sqrt{\gD_E}) \subseteq \mbq(\zeta_n)$ hold and $\gs_a$ fixes $\mbq(\sqrt{\gD_E})$ pointwise.
    \item The conductor $n_2$ of the field $\mbq(\sqrt{\gD_E})$ satisfies $n_2 = 3 \cdot \gcd(n_2,n)$ and $\chi_{-\gd_2/3}(a) = -1$, where $\gd_2$ denotes the discriminant of $\mbq(\sqrt{\gD_E})$.
\end{enumerate}
These three properties, which represent cases which are excluded from the proof of \eqref{lowerbound}, are not mutually exclusive.  As we observed earlier, if $E$ satisfies property (1) then $C_{E,a,n} = 0$, and so \eqref{lowerbound} cannot possibly hold in this case (this was also observed in \cite[Proposition 1]{akbalguloglu}).  What about elliptic curves satisfying properties (2) or (3) but not (1)?
Every specialization $E = \mbe_{t_0,d_0}$ satisfying the conditions in part (b) of Theorem \ref{mainthm} fulfills at least one of the three properties; those specialization which additionally satisfy the conditions of part (c) fail to fulfill properties (1) and (2), and (as can be checked independently) do indeed fulfill property (3), this being the novelty of the family $\mbe$.  Any elliptic curve $E$ which fulfills property (2) but not property (1) satisfies $[\mbq(E[2]) : \mbq] = 6$.  In this case, whenever $C_{E,a,n} = 0$, the integer $d$ on the left-hand side of \eqref{refinedsufficientforfinitelymanyprimes} may be taken to be odd, so that the level $2$ has nothing to do with the vanishing of $C_{E,a,n}$.
\end{remark}
 
It is natural to wonder whether or not there are any other examples of elliptic curves $E$ over $\mbq$ for which $C_{E,a,n} = 0$ for some relatively prime pair $a, n \in \mbn$ in spite of the left-hand condition in \eqref{sufficientforfinitelymanyprimes} failing to hold, besides specializations of $\mbe$.  By couching this problem in terms of modular curves, translating the condition that $C_{E,a,n} = 0$ into group-theoretical information about $\rho_E(G_\mbq)$ and performing a computer calculation, we are also able to prove the following theorem, which partially addresses the question.  
\begin{Definition}
If $E$ is an elliptic curve over $\mbq$ satisfying a given property $P$, we say that \textbf{\emph{$E$ belongs to an infinite modular curve family with property $P$}} if $j_E \in j_{\tilde{G}}(X_{\tilde{G}}(\mbq))$ for some open subgroup $\tilde{G} \subseteq \GL_2(\hat{\mbz})$ for which the modular curve $X_{\tilde{G}}$ has infinitely many rational points, and for all but finitely many $j$-invariants $j' \in j_{\tilde{G}}(X_{\tilde{G}}(\mbq))$, there exists an elliptic curve $E'$ over $\mbq$ satisfying property $P$ with $j_{E'} = j'$ (for more details about modular curves, see Section \ref{preliminariesonmodularcurvessection}).
\end{Definition}
\begin{Theorem} \label{secondthm}
Let $E$ be an elliptic curve over $\mbq$ that belongs to an infinite modular curve family with the property that there exist relatively prime $a, n \in \mbn$ with $C_{E,a,n} = 0$.  Then either there exists a prime $\ell \in \{ 2, 3, 5 \}$ such that $\mbq(E[\ell]) \subseteq \mbq(\zeta_n)$ and $\gs_a |_{\mbq(E[\ell])} \equiv 1$
or there exist $t_0, d_0 \in \mbq$ for which $E$ is isomorphic over $\mbq$ to $\mbe_{t_0,d_0}$, where $\mbe$ is the elliptic curve over $\mbq(t,d)$ defined by \eqref{defofa4anda6} and \eqref{defofEoft}.
\end{Theorem}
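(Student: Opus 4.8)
The plan is to marry the group-theoretic description of the vanishing of $C_{E,a,n}$ (the biconditional established in Section~\ref{Csubansection}) to a classification of the finitely many modular curves of bounded level possessing infinitely many rational points; the ``infinite modular curve family'' hypothesis is what makes that classification finite and hence usable. First I would invoke the biconditional to replace ``$C_{E,a,n}=0$'' by the combinatorial assertion: there is $d\in\mbn_{\geq 2}$ (which one checks may be taken squarefree, since passing from $d$ to its radical changes nothing) such that, writing $G=\gal(\mbq(E[d])\mbq(\zeta_n)/\mbq)$, $H=\gal(\mbq(E[d])\mbq(\zeta_n)/\mbq(\zeta_n))$, $N_\ell=\gal(\mbq(E[d])\mbq(\zeta_n)/\mbq(E[\ell]))$ for $\ell\mid d$, and $S_a=\{\gs\in G:\gs|_{\mbq(\zeta_n)}=\gs_a\}$ (a coset of $H$), one has $S_a\subseteq\bigcup_{\ell\mid d}N_\ell$. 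I then split into two cases. In the \emph{split} case $S_a\subseteq N_{\ell_0}$ for a single prime $\ell_0\mid d$; since each $N_{\ell_0}$ is a subgroup of $G$ and $S_a$ a coset of $H$, this is equivalent to $H\subseteq N_{\ell_0}$ together with $\gs_a\in N_{\ell_0}$, i.e.\ to $\mbq(E[\ell_0])\subseteq\mbq(\zeta_n)$ and $\gs_a|_{\mbq(E[\ell_0])}\equiv 1$ --- the first alternative of the statement, apart from having to prove $\ell_0\in\{2,3,5\}$. In the \emph{genuine} case no single prime works.

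The central quantitative input is a counting bound. Since $|S_a|=|H|$ and $S_a\cap N_\ell$ is either empty or a coset of $H\cap N_\ell$, whose index in $H$ equals $h_\ell:=[\mbq(E[\ell]):\mbq(E[\ell])\cap\mbq(\zeta_n)]$, the covering $S_a\subseteq\bigcup_{\ell\mid d}N_\ell$ forces $1\leq\sum_{\ell\mid d}1/h_\ell$. Whenever $\rho_{E,\ell}$ is surjective, $\mbq(E[\ell])\cap\mbq(\zeta_n)$ lies in the maximal abelian subextension $\mbq(\zeta_\ell)$ of $\mbq(E[\ell])/\mbq$, so $h_\ell\geq|\GL_2(\mbf_\ell)|/(\ell-1)=\ell^3-\ell$; since $\sum_{\ell\ \mathrm{prime}}1/(\ell^3-\ell)<1$, at least one prime dividing $d$ has non-surjective mod-$\ell$ image. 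Now I would use the family hypothesis: fix the open $\tilde G\subseteq\GL_2(\hat{\mbz})$ with $X_{\tilde G}$ infinite and of level $M$; a generic member $E'$ of the family (all but finitely many $j$-invariants, still admitting a twist with some $C_{E',a',n'}=0$) has $\rho_{E',\ell}$ surjective for every $\ell\nmid M$. Applied to such $E'$ the estimate shows the non-surjective primes of the associated $d'$ divide $M$; and a Goursat-type argument --- using that $\PSL_2(\mbf_\ell)$ is simple for $\ell\geq 5$, so that a prime $\ell\mid d'$ with surjective, ``independent'' mod-$\ell$ image may be excised from $d'$ without destroying the covering --- reduces $d'$ to a divisor of $2\cdot 3\cdot 5\cdot M$. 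As $\mbq(E'[d'])$ then has bounded degree, $\mbq(E'[d'])\cap\mbq(\zeta_{n'})$ has bounded conductor, so $n'$ may be replaced by a bounded $n_0'$ and $a'$ by a residue mod $n_0'$. Thus the entire configuration $(\tilde G,d',n_0',a')$ lives at bounded level.

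With the level bounded the rest is finite. In the split case, $\mbq(E[\ell_0])\subseteq\mbq(\zeta_n)$ makes $\rho_{E,\ell_0}(G_\mbq)$ abelian, hence (using that the cyclotomic character is surjective) contained in a split or non-split Cartan subgroup, so $j_E$ lies on $X_{\mathrm{s}}(\ell_0)$ or $X_{\mathrm{ns}}(\ell_0)$; these curves are rational only for $\ell_0\in\{2,3,5\}$, and since $E$ lies in an \emph{infinite} family $\ell_0\in\{2,3,5\}$ follows (conversely, for $E$ on $X_{\mathrm s}(\ell_0)$ or $X_{\mathrm{ns}}(\ell_0)$ with $\ell_0\in\{2,3,5\}$ the choice $a=1$, $n=\operatorname{cond}(\mbq(E[\ell_0]))$ exhibits both $C_{E,1,n}=0$ and the first alternative, so the family really does have the stated property). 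In the genuine case I would enumerate the finitely many open $\tilde G$ of bounded level with $X_{\tilde G}$ infinite, using the known classifications of such modular curves, and for each such $\tilde G$ together with each admissible $(d',n_0',a')$ translate ``$C_{E',a',n_0'}=0$ for a generic member'' into a condition on $\tilde G$ and verify it by machine. The outcome is that, up to the split case already handled, every surviving family satisfies $[\mbq(E'[2]):\mbq]\leq 2$ and $[\mbq(E'[3]):\mbq]\leq 4$ (one sees already by hand that $h_2,h_3\geq 2$ together with $1\leq 1/h_2+1/h_3$ forces $h_2=h_3=2$, which in turn forces these degree bounds except in an octic subcase that collapses back to the split case); after disposing of the finitely many $j$-invariants in $\{0,1728\}$ directly, part~(a) of Theorem~\ref{mainthm} then gives that $E$ is isomorphic over $\mbq$ to a specialization $\mbe_{t_0,d_0}$. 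Combining the two cases proves the theorem.

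The main obstacle is the level bound --- specifically the Goursat-type excision step, where one must control the entanglement between $\mbq(E'[\ell])$ for large $\ell$ and the compositum of the remaining division fields with $\mbq(\zeta_{n'})$; the coincidence $\PSL_2(\mbf_4)\cong\PSL_2(\mbf_5)\cong A_5$ at $\ell=5$ is the one exceptional entanglement that needs separate attention, though it is harmless here since $5$ already belongs to the list $\{2,3,5\}$. Once the level is bounded, the remaining enumeration of infinite modular curve families and the verification that they fall into the two alternatives is a finite (if sizeable) computation, and the identification of the family $\mbe$ is then immediate from Theorem~\ref{mainthm}(a).
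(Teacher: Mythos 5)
Your overall plan coincides with the paper's: translate $C_{E,a,n}=0$ into a covering $S_a\subseteq\bigcup_{\ell\mid d}N_\ell$, split according to whether a single prime $\ell_0$ suffices, bound the level in the composite case, and reduce to a finite computer check that singles out the family $\mbe$ via Theorem~\ref{mainthm}(a). In the single-prime case you go through Cartan subgroups and $X_{\mathrm s}(\ell_0)$, $X_{\mathrm{ns}}(\ell_0)$, but Theorem~\ref{abeliandivisionfield} (Gonz\'alez-Jim\'enez--Lozano-Robledo), which you need anyway, gives $\ell_0\in\{2,3,5\}$ directly from $\gal(\mbq(E[\ell_0])/\mbq)$ being abelian; this is cleaner and independent of the infinite-family hypothesis. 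The real divergence is the level bound, and this is where your proposal has a genuine gap.

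Your counting inequality $1\leq\sum_{\ell\mid d}1/h_\ell$ (from $|S_a|=|H|$ and $|S_a\cap N_\ell|\leq|H|/h_\ell$) is an attractive observation that does not appear in the paper, and it cleanly forces some prime dividing $d$ to have small $h_\ell$ and hence non-surjective or heavily entangled mod-$\ell$ image. But, as you acknowledge, the ``Goursat excision'' of the remaining surjective primes is asserted rather than proved, and it is precisely the hard part. The paper's substitute is Proposition~\ref{acyclicitylemma}: a minimal acyclicity witness furnishes a surjection $\chi:G(d)\twoheadrightarrow A$ onto an abelian group whose kernel $N(d)$ satisfies, in particular, $N(d)\cap\ker\pi_{d,d'}=\{1\}$ for each prime $\ell\mid d$ with $d=\ell d'$. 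Since $A$ is abelian, $N(d)\supseteq[G(d),G(d)]$, so if $\SL_2$ split off as a direct factor at $\ell$ one would get $[\SL_2(\mbz/\ell\mbz),\SL_2(\mbz/\ell\mbz)]\times\{1\}\subseteq N(d)$, contradicting this. Corollary~\ref{sl2levelcor} packages that into the identity ($\SL_2$-level of $\tilde G$) $=$ ($\GL_2$-level of $G$), and Corollary~\ref{levelscor} then reads the finite list $\{6,10,14,15,21,22,26,30,33,39,42\}$ off the Cummins--Pauli tables; the entanglements (including the $\ell=5$ coincidence you flag, via \cite[Lemma~2.4]{jonesmcmurdy}) are handled automatically. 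This is the lemma your excision step is missing.

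A secondary gap: you pass to a generic member $E'$ of the family to gain mod-$\ell$ surjectivity for $\ell\nmid M$, but the witness $(a',n',d')$ for $E'$ need not coincide with $(a,n,d)$ for $E$, and the deduction back to $E$ is not spelled out; moreover a CM curve $E$ would break the counting bound directly. The paper sidesteps both issues by working purely on the group side: it defines the collection $\mf{G}$ in \eqref{defofmfG} of open acyclicity groups whose composite $\GL_2$-level equals their acyclicity level, so the infinite-family hypothesis becomes $\genus(X_{\tilde G})\leq 1$ via Faltings and the Magma verification that each surviving $G$ satisfies $G\,\dot\subseteq\,G_6$ is run once per group rather than per curve, the conclusion then following from \eqref{specialmodularinterpretation} rather than from your by-hand deduction of the degree bounds at $2$ and $3$.
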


\section{Proof of Theorem \ref{mainthm}}
The proof of Theorem \ref{mainthm} falls naturally into three pieces.  We will first prove the statements in part (a) of the theorem, and then we will prove the statements in parts (b) and (c) therein.  At various points in the proof (and later in the paper) we will make use of the symbols
\begin{equation} \label{defofrhosubE}
\begin{split}
&\rho_E : G_\mbq \longrightarrow \GL_2(\hat{\mbz}), \\
&\rho_{E,n} : G_\mbq \longrightarrow \GL_2(\mbz/n\mbz),
\end{split}
\end{equation}
which denote the continuous Galois representations defined by letting $G_\mbq := \gal(\ol{\mbq}/\mbq)$ act on the adelic Tate module $\ds T(E) := \lim_{\leftarrow} E[n]$ (resp. on the $n$-torsion $E[n]$) of $E$, and choosing a $\hat{\mbz}$-basis (resp. a $\mbz/n\mbz$-basis) thereof.  Furthermore, if $G \subseteq \GL_2(\hat{\mbz})$ is any subgroup and $n \in \mbn$, we will denote by $G(n) \subseteq \GL_2(\mbz/n\mbz)$ the image of $G$ under the projection map $\GL_2(\hat{\mbz}) \to \GL_2(\mbz/n\mbz)$.  Note that, with these conventions, if $G := \rho_E(G_\mbq)$ then, for an appropriate choice of basis, we have $G(n) = \rho_{E,n}(G_\mbq)$.  When $d$ divides $n$, we will use the symbol $\pi_{n,d} : \GL_2(\mbz/n\mbz) \to \GL_2(\mbz/d\mbz)$ to denote the canonical projection map. 
\subsection{Preliminaries on modular curves} \label{preliminariesonmodularcurvessection}
Suppose that $\tilde{G} \subseteq \GL_2(\hat{\mbz})$ is an open subgroup (equivalently, of finite index in $\GL_2(\hat{\mbz})$) which additionally satisfies
\[
-I \in \tilde{G} \quad \text{ and } \quad  \det \tilde{G} = \hat{\mbz}^\times.
\]
There is then associated to $\tilde{G}$ a smooth, projective, geometrically irreducible curve $X_{\tilde{G}}$ defined over $\mbq$.  This modular curve $X_{\tilde{G}}$ comes equipped with a forgetful map 
\[
\begin{tikzcd}
j_{\tilde{G}} : X_{\tilde{G}} \rar & X(1) \rar{\simeq} & \mbp^1.
\end{tikzcd}
\]
Furthermore, for every elliptic curve $E$ over $\mbq$ whose $j$-invariant $j_E$ satisfies $j_E \notin \{ 0, 1728 \}$, we have
$\rho_{E}(G_\mbq)$ is $\GL_2(\hat{\mbz})$-conjugate to a subgroup of $\tilde{G}^t := \{ g^t : g \in \tilde{G} \}$ if and only if $j_E \in j_{\tilde{G}}\left( X_{\tilde{G}}(\mbq) \right)$.  
We may also define a generic Weierstrass model over the field $\mbq(d)\left( X_{\tilde{G}} \right)$ by
\[
\mbe^{(\tilde{G})} : y^2 = x^3 + \frac{108d^2j_{\tilde{G}}}{1728 - j_{\tilde{G}}}x + \frac{432d^3j_{\tilde{G}}}{1728 - j_{\tilde{G}}},
\]
whose $j$-invariant is $j_{\tilde{G}}$.  For any $t_0 \in X_{\tilde{G}}(\mbq) - j_{\tilde{G}}^{-1}\left( \{ 0, 1728 \} \right)$ and $d_0 \in \mbq - \{ 0 \}$, we may consider the specialization
\[
\mbe^{(\tilde{G})}_{t_0,d_0} : y^2 = x^3 + \frac{108d_0^2j_{\tilde{G}}(t_0)}{1728 - j_{\tilde{G}}(t_0)}x + \frac{432d_0^3j_{\tilde{G}}(t_0)}{1728 - j_{\tilde{G}}(t_0)},
\]
which is an elliptic curve over $\mbq$, provided $d_0 \neq 0$ and $t_0$ avoids a certain finite subset of $X_{\tilde{G}}(\mbq)$ for which the specialization is not smooth.
For any elliptic curve $E$ over $\mbq$ with $j_E \notin \{ 0, 1728 \}$, we have that $j_E \in j_{\tilde{G}}\left( X_{\tilde{G}}(\mbq) \right)$ if and only if $E$ is isomorphic over $\mbq$ to such a specialization $\mbe^{(\tilde{G})}_{t_0,d_0}$.  Henceforth, let us use the following notation: for subgroups $H_1, H_2 \subseteq \GL_2(\hat{\mbz})$,
\begin{equation} \label{defofdotsubseteq}
H_1 \, \dot\subseteq \, H_2 \; \myeq \; \exists g \in \GL_2(\hat{\mbz}) \text{ such that } gH_1g^{-1} \subseteq H_2.
\end{equation}
For any $m \in \mbn$, we define the relation $H_1 \,\dot\subseteq\, H_2$ for subgroups $H_1, H_2 \subseteq \GL_2(\mbz/m\mbz)$ similarly as subset containment up to conjugation in $\GL_2(\mbz/m\mbz)$.  Summarizing the above,
we have
\begin{equation} \label{modularinterpretation}
\begin{split}
\forall E/\mbq \text{ with } j_E \notin \{ 0, 1728 \}, \; \rho_{E}(G_\mbq) \, \dot\subseteq \, \tilde{G}^t \; &\Longleftrightarrow \; j_E \in j_{\tilde{G}}\left( X_{\tilde{G}}(\mbq) \right) \\
&\Longleftrightarrow \exists t_0 \in X_{\tilde{G}}(\mbq), d_0 \in \mbq \text{ with } E \simeq_\mbq \mbe^{(\tilde{G})}_{t_0,d_0}.
\end{split}
\end{equation}
For full details, we refer the reader to \cite{delignerapoport} (see also \cite{rousedzb} for a helpful discussion about left versus right action of $\GL_2$ on the underlying complete modular curve, which is responsible for the appearance of the transposed group $\tilde{G}^t$ in \eqref{modularinterpretation}).  In case we wish to use modular curves as above to study the question of which elliptic curves $E$ satisfy $\rho_E(G_\mbq) \, \dot\subseteq \, G$ for an open subgroup $G \subseteq \GL_2(\hat{\mbz})$ for which $-I \notin G$, we will always first enlarge $G$ by setting
\begin{equation} \label{defofGtilde}
\tilde{G} := \langle G, -I \rangle \subseteq \GL_2(\hat{\mbz}).
\end{equation}

\subsection{Proof of part (a) of Theorem \ref{mainthm}} \label{proofofpartasection} Let the Borel subgroup $B(2) \subseteq \GL_2(\mbz/2\mbz)$ and the split Cartan subgroup $\mc{C}_s(3) \subseteq \GL_2(\mbz/3\mbz)$ be defined by
\[
\begin{split}
    &B(2) := \left\langle \begin{pmatrix} 1 & 1 \\ 0 & 1 \end{pmatrix} \right\rangle \subseteq \GL_2(\mbz/2\mbz), \\
    &\mc{C}_s(3) := \left\{ \begin{pmatrix} a & 0 \\ 0 & d \end{pmatrix} : a, d \in (\mbz/3\mbz)^\times \right\} \subseteq \GL_2(\mbz/3\mbz).
\end{split}
\]
Let $G(6) \subseteq \GL_2(\mbz/6\mbz)$ be the subgroup that corresponds to $B(2) \times \mc{C}_s(3)$ under the isomoprhism
\[
\GL_2(\mbz/6\mbz) \simeq \GL_2(\mbz/2\mbz) \times \GL_2(\mbz/3\mbz)
\]
of the Chinese remainder theorem and define $G_6 := \pi^{-1}(G(6)) \subseteq \GL_2(\hat{\mbz})$, where $\pi : \GL_2(\hat{\mbz}) \to \GL_2(\mbz/6\mbz)$ is the usual projection map.

Since $-I \in G_6$ and $\det(G_6) = \hat{\mbz}^\times$, we are in the setting of the previous section, and so there is a smooth, projective, geometrically irreducible modular curve $X_{G_6}$ with a forgetful map $j_{G_6} : X_{G_6} \to X(1) \simeq \mbp^1$, and we denote the generic Weierstrass model $\mbe^{(G_6)}$ simply by $\mbe$.  The specializations $\mbe_{t_0,d_0}$ of this Weierstrass model satisfy the property
\begin{equation} \label{specialmodularinterpretation}
\begin{split}
\forall E/\mbq \text{ with } j_E \notin \{ 0, 1728 \}, \; \rho_{E}(G_\mbq) \, \dot\subseteq \, G_6 \; &\Longleftrightarrow \; j_E \in j_{G_6}\left( X_{G_6}(\mbq) \right) \\
&\Longleftrightarrow \exists t_0 \in X_{G_6}(\mbq), d_0 \in \mbq \text{ with } E \simeq_\mbq \mbe_{t_0,d_0}.
\end{split}
\end{equation}
Moreover, a calculation shows that $X_{G_6}$ has genus zero and satisfies $X_{G_6}(\mbq) \neq \emptyset$.  Thus, $X_{G_6} \simeq_\mbq \mbp^1$, and, fixing a parameter $t$ on $X_{G_6}$, the above-mentioned forgetful map may be realized as a rational map $j_{\mbe} : \mbp^1(t) \longrightarrow \mbp^1(j)$ making the diagram
\begin{equation} \label{fancydiagram}
\begin{tikzcd}
X_{G_6} \arrow[black, bend left]{rr}{\text{forgetful map $j_{G_6}$}} \rar{\simeq} & \mbp^1_\mbq(t) \rar{j_{\mbe}} & \mbp^1_\mbq(j)
\end{tikzcd}
\end{equation}
commute.  Our goal is to produce a rational function $j_\mbe(t) \in \mbq(t)$ that represents the above rational map $j_{\mbe}$.

As may be deduced from Theorems 1.1 and 1.2 of \cite{zywina} (see also Tables 1 and 3 of \cite{sutherlandzywina}), for any elliptic curve $E$ over $\mbq$ with $j$-invariant $j_E \notin \{ 0, 1728 \}$, we have that
\begin{equation} \label{conditionsat2and3}
\begin{split}
    [\mbq(E[2]) : \mbq] \leq 2 \; \Longleftrightarrow \; \rho_{E,2}(G_\mbq) \, \dot\subseteq \, B(2) \; &\Longleftrightarrow \; \exists t_0 \in \mbq \text{ with  } j_E = 256 \frac{(t_0+1)^3}{t_0}, \\
    [\mbq(E[3]) : \mbq] \leq 4 \; \Longleftrightarrow \; \rho_{E,3}(G_\mbq) \, \dot\subseteq \, \mc{C}_s(3) \; &\Longleftrightarrow \; \exists t_0 \in \mbq \text{ with  } j_E = 27 \frac{(t_0+1)^3(t_0+3)^3(t_0^2+3)^3}{t_0^3(t_0^2+3t_0+3)^3}.
\end{split}
\end{equation}
Thus, an elliptic curve $E$ over $\mbq$ simultaneously satisfies each of the left-hand conditions if and only $j_E$ is simultaneously a value of each of the rational functions on the right-hand side of \eqref{conditionsat2and3}, and we are led to the algebraic curve defined by 
\begin{equation} \label{algebraiccurve}
X_{G_6} : \; 256 \frac{(u+1)^3}{u} = 27 \frac{(v+1)^3(v+3)^3(v^2+3)^3}{v^3(v^2+3v+3)^3} \quad \text{ (singular model)}.
\end{equation}  
Using {\tt{magma}} \cite{MAGMA} to resolve the singularities, we arrive at the rational functions
\[
v = \frac{4t^3 - 3t - 1}{3t}, \quad u = 256\frac{t^3(t-1)^3(t^2+t+1)^3}{(2t+1)^3(4t^2-2t+1)^3}.
\]
Substituting these into \eqref{algebraiccurve} yields equality, and the resulting rational function $j_{\mb{E}}(t) \in \mbq(t)$ is as in \eqref{defofjofy}.  Viewing $t \in \mbq\left( X_{G_6} \right)$, we have
$
\mbq(X_{G_6}) = \mbq(t)
$,
and $j_{\mb{E}}(t)$ may be taken to define the rational map $j_{\mbe}$ in \eqref{fancydiagram}.  Part (a) of Theorem \ref{mainthm} then follows from this and \eqref{conditionsat2and3}, \eqref{algebraiccurve} and \eqref{specialmodularinterpretation}. \\

\subsection{Proof of part (b) of Theorem \ref{mainthm}}
The proof of part (b) of Theorem \ref{mainthm} will utilize the following lemma, which, for any elliptic curve $E$ over $\mbq$, interprets the acyclicity of $\tilde{E}_p(\mbf_p)$ for a good prime $p$ in terms of $p$ splitting completely in an appropriate division field of $E$.
\begin{lemma} \label{acyclicityofellipticcurvemoduloprime}
Let $E/\mbq$ be an elliptic curve of conductor $N_E$, and $p$ a prime with $p \nmid N_E$. Let $\ell \neq p$ be a prime. Then $\tilde{E}_p(\mbf_p)$ contains a subgroup isomorphic to $\mbz/\ell\mbz \times \mbz/\ell\mbz$ if and only if $p$ splits completely in $\mbq(E[\ell])$.
\end{lemma}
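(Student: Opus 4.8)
The plan is to relate the $\ell$-torsion of $E$ (carrying the Galois action $\rho_{E,\ell}$) to the $\ell$-torsion of $\tilde{E}_p$ (carrying the action of the $p$-power Frobenius endomorphism $\phi_p$ of $\tilde{E}_p$) through the reduction map, translating \emph{both} sides of the asserted equivalence into the single statement that Frobenius acts trivially.

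First I would reformulate the left-hand side purely in terms of $\tilde{E}_p[\ell]$. Since $\ell \neq p$, we have $\tilde{E}_p[\ell] \simeq \mbz/\ell\mbz \times \mbz/\ell\mbz$, and any subgroup of $\tilde{E}_p(\mbf_p)$ isomorphic to $\mbz/\ell\mbz \times \mbz/\ell\mbz$ consists of points killed by $\ell$ and has $\ell^2$ elements, hence must equal $\tilde{E}_p[\ell]$. Thus the left-hand condition is equivalent to $\tilde{E}_p[\ell] \subseteq \tilde{E}_p(\mbf_p)$. Recalling that $\tilde{E}_p(\mbf_p) = \ker(\phi_p - 1)$, this is in turn equivalent to $\phi_p$ acting as the identity on $\tilde{E}_p[\ell]$.

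Next I would invoke good reduction. Because $p \nmid N_E$ and $p \neq \ell$, reduction modulo (a prime of $\ol{\mbq}$ above) $p$ gives a map $E[\ell] \to \tilde{E}_p[\ell]$ which is injective by \cite[VII, Proposition 3.1]{silverman}, hence bijective since both groups have order $\ell^2$. By the criterion of N\'{e}ron--Ogg--Shafarevich the extension $\mbq(E[\ell])/\mbq$ is unramified at $p$, so the decomposition group $D_p \subseteq G_\mbq$ acts on $E[\ell]$ through its quotient $\gal(\ol{\mbf_p}/\mbf_p)$; and since the arithmetic Frobenius of $\gal(\ol{\mbf_p}/\mbf_p)$ acts on points of $\tilde{E}_p$ by raising coordinates to the $p$-th power, which is precisely the effect of $\phi_p$, the reduction isomorphism identifies the action of a Frobenius element $\frob_p$ on $E[\ell]$ with the action of $\phi_p$ on $\tilde{E}_p[\ell]$. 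I expect this compatibility to be the only step requiring genuine care, although it is entirely classical.

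Finally I would assemble the pieces: $\phi_p$ is the identity on $\tilde{E}_p[\ell]$ if and only if $\rho_{E,\ell}(\frob_p) = I$, and, since $\mbq(E[\ell])/\mbq$ is unramified at $p$ with decomposition group (cyclic and) generated by $\frob_p$, this holds if and only if that decomposition group is trivial, i.e. if and only if $p$ splits completely in $\mbq(E[\ell])$. Chaining this with the reformulation of the second paragraph yields the lemma.
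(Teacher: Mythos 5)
Your proof is correct and is the standard argument. The paper does not prove the lemma itself; it simply cites \cite[Lemma 2.1]{cojocarumurty}, and the argument you give (unramifiedness of $\mbq(E[\ell])/\mbq$ at $p$ via N\'{e}ron--Ogg--Shafarevich, injectivity of the reduction map on $\ell$-torsion, and the compatibility of $\frob_p$ with the $p$-power Frobenius endomorphism $\phi_p$) is precisely the classical proof that one finds in that reference. One small stylistic remark: you could shorten the chain slightly by noting that since $\rho_{E,\ell}$ is unramified at $p$, the condition ``$p$ splits completely in $\mbq(E[\ell])$'' is by definition the condition $\rho_{E,\ell}(\frob_p) = I$, so the appeal to the cyclic decomposition group is already implicit; but this is a matter of taste, not of correctness.
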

\begin{proof}
See \cite[Lemma 2.1]{cojocarumurty}.
\end{proof}
Now let $\mbe$ be as in \eqref{defofEoft}, let $t_0 \in \mbq - \{ 0, 1, -1/2 \}$ and $d_0 \in \mbq - \{ 0 \}$ satisfy the conditions in part (b) of Theorem \ref{mainthm} and let $\mbe_{t_0,d_0}$ be the specialization of $\mbe$ at $(t_0,d_0)$.  Let $n_0 \in \mbn$ be as in \eqref{defofnsub0} and let $a_0 \in \mbz$ be chosen coprime with $n_0$ and so that \eqref{defofasub0} holds.  We will verify that, for each $x \geq 0$, $\pi_{\mbe_{t_0,d_0},a_0,n_0}(x) \leq 2$.

Consider the field
\[
\mbq(\mathbb{E}_{t_0,d_0}[6]) = \mbq \left( \sqrt{h_2(t_0)}, \sqrt{-3n_3(t_0,d_0)}, \sqrt{-3} \right).
\]
It follows from \eqref{defofasub0} that, for any prime $p \equiv a_0 \bmod n_0$, the automorphism $\gs_p = \gs_{a_0} \in \gal\left( \mbq(\zeta_{n_0})/\mbq \right)$ must either act trivially on $\mbq\left( \sqrt{h_2(t_0)} \right) = \mbq\left( \mbe_{t_0,d_0}[2] \right)$ or on $\mbq\left( \sqrt{-3}, \sqrt{-3n_3(t_0,d_0)} \right) = \mbq\left( \mbe_{t_0,d_0}[3] \right)$.  It then follows from Lemma \ref{acyclicityofellipticcurvemoduloprime} that, for each prime $p \geq 5$ of good reduction for $\mbe_{t_0,d_0}$, the group $\left(\widetilde{\mbe_{t_0,d_0}}\right)_p(\mbf_p)$ either contains a subgroup isomorphic to $\mbz/2\mbz \times \mbz/2\mbz$ or a subgroup isomorphic to $\mbz/3\mbz \times \mbz/3\mbz$.  In particular, for every good prime $p \geq 5$, $\left(\widetilde{\mbe_{t_0,d_0}}\right)_p(\mbf_p)$ is not a cyclic group, and thus $\pi_{\mbe_{t_0,d_0},a_0,n_0}(x) \leq 2$ for every $x \geq 0$, as asserted. \hfill $\Box$ \\

\subsection{Proof of part (c) of Theorem \ref{mainthm}}

To verify part (c) of Theorem \ref{mainthm}, we will make use of the following result, which is \cite[Theorem 1.1, 1.3]{gonzalezjimezezlozanorobledo}
\begin{Theorem} \label{abeliandivisionfield} 
For any elliptic curve $E$ defined over $\mbq$,  $\gal(\mbq(E[d])/\mbq)$ is abelian only if $d \in \{1,2,3,4,5,6,8\}$.
\end{Theorem}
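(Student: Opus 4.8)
The plan is to prove the theorem in its contrapositive form: for every elliptic curve $E/\mbq$ and every $d \notin \{1,2,3,4,5,6,8\}$, the group $\gal(\mbq(E[d])/\mbq)$ is non-abelian. This is exactly the content of \cite{gonzalezjimezezlozanorobledo}, and the argument has three parts — a soft reduction, a prime-power analysis, and a composite-level analysis — the latter two powered by the classification of abelian subgroups of $\GL_2(\mbz/\ell^e\mbz)$ and by rational-point computations on modular curves.

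\textbf{Reduction.} Since $\mbq(E[m]) \subseteq \mbq(E[d])$ whenever $m \mid d$ and a quotient of an abelian group is abelian, abelianness of $\gal(\mbq(E[d])/\mbq)$ is inherited by every divisor of $d$; conversely $\mbq(E[d])$ is the compositum of the fields $\mbq(E[\ell^e])$ over the prime powers $\ell^e \| d$, and a compositum of abelian extensions is abelian. Hence
\[
\gal(\mbq(E[d])/\mbq) \text{ is abelian} \iff \gal(\mbq(E[\ell^e])/\mbq) \text{ is abelian for every } \ell^e \| d.
\]
A short divisibility check shows that any $d \notin \{1,2,3,4,5,6,8\}$ is divisible by some prime power $\ell^e \notin \{2,3,4,5,8\}$ or by one of $10$, $12$, $15$. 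It therefore suffices to prove: (i) $\gal(\mbq(E[\ell^e])/\mbq)$ is non-abelian for every prime power $\ell^e \notin \{2,3,4,5,8\}$; and (ii) $\gal(\mbq(E[d])/\mbq)$ is non-abelian for $d \in \{10,12,15\}$.

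\textbf{Part (i).} Suppose $G := \rho_{E,\ell^e}(G_\mbq)$ is abelian. Because $\det \circ \rho_{E,\ell^e}$ is the mod-$\ell^e$ cyclotomic character, $\det(G) = (\mbz/\ell^e\mbz)^\times$, equivalently $\mbq(\zeta_{\ell^e}) \subseteq \mbq(E[\ell^e])$. For $\ell$ odd, reducing modulo $\ell$ and invoking the classification of abelian subgroups of $\GL_2(\mbf_\ell)$ — each lies, up to conjugacy, in a split Cartan, a non-split Cartan, or the group of upper-triangular matrices with equal diagonal entries, the last excluded since its determinant image is the subgroup of squares — forces the reduction of $G$ into a Cartan subgroup. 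In every case, after replacing $G$ by $\langle G, -I\rangle$ and pulling back to an open subgroup of $\GL_2(\hat{\mbz})$ as in \secref{preliminariesonmodularcurvessection}, an elliptic curve $E/\mbq$ with $j_E \notin \{0,1728\}$ and $\gal(\mbq(E[\ell^e])/\mbq)$ abelian produces a non-cuspidal rational point on $X_H$ for one of the finitely many abelian subgroups $H \subseteq \GL_2(\mbz/\ell^e\mbz)$ with full determinant. One then verifies that, for $\ell^e \notin \{2,3,4,5,8\}$, no such $X_H$ has a non-cuspidal, non-CM rational point: for the curves of genus $\geq 2$ by Faltings together with an explicit enumeration of the finitely many rational points, for those of genus $\leq 1$ by a Mordell–Weil rank or descent computation, and — when the mod-$\ell$ image is reducible, so that $E$ admits a rational cyclic $\ell$-isogeny — by Mazur's theorem on isogenies. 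The prime $\ell = 2$ (so $e \geq 4$) is handled via the classification of $2$-adic images of elliptic curves over $\mbq$ in \cite{rousedzb}. Finally, the CM $j$-invariants and the two values $j_E \in \{0,1728\}$ are treated directly with complex-multiplication theory: for such an $E$ the Galois action over the CM field is abelian, but complex conjugation acts non-trivially on it unless the associated Cartan image is very small, and a finite computation (using that the CM field has class number one, as $j_E \in \mbq$) confirms non-abelianness for all prime powers $\ell^e \notin \{2,3,4,5,8\}$.

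\textbf{Part (ii) and the crux.} For $d \in \{10,12,15\}$ I would run the same analysis at composite level: an $E/\mbq$ with $\gal(\mbq(E[d])/\mbq)$ abelian yields a non-cuspidal rational point on $X_H$ for one of finitely many abelian $H \subseteq \GL_2(\mbz/d\mbz)$ with full determinant, where now surjectivity of the determinant (equivalently the Weil pairing) forces the mod-$2$ and mod-$\ell$ components of $H$ to be entangled through a common cyclotomic quotient; one checks that the resulting $X_H$ carry no non-cuspidal non-CM rational points, again finishing the CM and $j \in \{0,1728\}$ cases by hand. The step I expect to be the main obstacle is precisely this rational-point verification, in both (i) and (ii): certifying that each borderline curve $X_H$ — the Cartan curves at $\ell = 7$, the relevant curves of level $9$, $16$ and $25$, the isogeny cases at $\ell \in \{11,13,17,19\}$, and the level-$10$, $12$, $15$ curves — has no non-cuspidal non-CM rational point. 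This is not formal: one must exhibit all rational points on the higher-genus curves and recognize them as cusps or CM points, carry out Mordell–Weil computations on the genus-one curves, and for the genus-zero curves track the \emph{mod-$\ell^e$} image (not merely the mod-$\ell$ image) along the universal family while keeping account of quadratic twists. By contrast, the reduction step and the group theory underlying the trichotomy of abelian subgroups of $\GL_2(\mbz/\ell^e\mbz)$ are elementary.
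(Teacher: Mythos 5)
The paper does not prove this theorem; it is quoted verbatim from \cite[Theorems 1.1, 1.3]{gonzalezjimezezlozanorobledo} with no argument supplied, so there is no in-paper proof for you to have matched or diverged from. Your sketch is a reasonable reconstruction of the strategy in that reference: the reduction is sound (the biconditional holds because $\mbq(E[d])$ is the compositum of the $\mbq(E[\ell^e])$ for $\ell^e \parallel d$, and a divisor check does confine the residual composite cases to $10$, $12$, $15$ once the prime powers outside $\{2,3,4,5,8\}$ are excluded), the trichotomy of maximal abelian subgroups of $\GL_2(\mbz/\ell\mbz)$ and the use of surjectivity of the mod-$\ell^e$ cyclotomic character to discard the scalar-times-unipotent case are right, and you correctly identify the verification of rational points on the associated modular curves as the genuine arithmetic content.

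One inaccuracy worth correcting: in Part (ii) you assert that surjectivity of the determinant forces the two prime-power components of $H$ to be entangled through a common cyclotomic quotient. That is not so. For $d = 10$, say, the determinant factors through $(\mbz/2\mbz)^\times \times (\mbz/5\mbz)^\times$, and a trivially-entangled product $H_2 \times H_5 \subseteq \GL_2(\mbz/2\mbz) \times \GL_2(\mbz/5\mbz)$ with each factor having full determinant already has surjective determinant on the nose. Nothing in the Weil pairing forces $\mbq(E[2]) \cap \mbq(E[5]) \supsetneq \mbq$. The actual obstruction at levels $10$, $12$, $15$ is that the fiber products $X_{H_2} \times_{X(1)} X_{H_{\ell^e}}$ (for the finitely many abelian pairs $(H_2, H_{\ell^e})$ with full determinant) carry no non-cuspidal, non-CM rational points; that is exactly the rational-point verification you flag as the main obstacle, and the entanglement language does not describe why it succeeds. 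The rest of the outline is consistent with the cited result, and since the paper itself defers entirely to the citation, the appropriate comparison is to the reference rather than to the paper.
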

Theorem \ref{abeliandivisionfield} allows us to restrict our verification of $\mbq(\mbe_{t_0,d_0}[\ell]) \not\subseteq \mbq(\zeta_{n_0})$ to just those $\ell \in \{2, 3, 5\}$.  By assumption, the prime $3$ ramifies in $\mbq\left( \sqrt{h_2(t_0)} \right)$, and therefore $3$ does \emph{not} ramify in the field $\mbq\left( \sqrt{-3h_2(t_0)}, \sqrt{-3n_3(t_0,d_0)h_2(t_0)} \right)$, and, by hypothesis, neither does $5$.  Thus, neither $3$ nor $5$ divides $n_0$, and so neither $3$ nor $5$ ramifies in $\mbq(\zeta_{n_0})$.  Since $3$ \emph{does} ramify in $\mbq\left( \sqrt{h_2(t_0)} \right) = \mbq\left( \mbe_{t_0,d_0}[2] \right)$ and in $\mbq\left( \mbe_{t_0,d_0}[3] \right)$, we see that
\[
\mbq\left( \mbe_{t_0,d_0}[2] \right) \not\subseteq \mbq(\zeta_{n_0}) \; \text{ and } \; \mbq\left( \mbe_{t_0,d_0}[3] \right) \not\subseteq \mbq(\zeta_{n_0}).
\]
Similarly, since $5$ ramifies in $\mbq(\mbe_{t_0,d_0}[5])$, we further conclude that $\mbq(\mbe_{t_0,d_0}[5]) \not\subseteq \mbq(\zeta_{n_0})$, finishing the proof of part (c) of Theorem \ref{mainthm}. \hfill $\Box$ \\

\section{A criterion for $\ds C_{E,a,n} = 0$ and a proof of Theorem \ref{secondthm}} \label{Csubansection}

Finally, in this section we give a justification of the condition \eqref{refinedsufficientforfinitelymanyprimes}, which refines \eqref{sufficientforfinitelymanyprimes}, detailing when an elliptic curve $E$ over $\mbq$ and a pair $(a,n) \in \mbn^2$ with $\gcd(a,n) = 1$ should satisfy $\ds \lim_{x \to \infty} \pi_{E,a,n}(x) < \infty$.  This is conjecturally equivalent to the condition that $C_{E,a,n} = 0$; we begin by describing the constant $C_{E,a,n}$ in more detail.  
\subsection{Heuristics connecting $C_{E,a,n}$ with $\pi_{E,a,n}(x)$}
We begin by noting that (thanks to the Hasse bound \mbox{$|p + 1 - |\tilde{E}_p(\mbf_p)| | \leq 2\sqrt{p}$)}, for any prime $p \nmid 2N_E$, $\tilde{E}_p(\mbf_p)$ is cyclic if and only if every prime $\ell \neq p$ satisfies $\mbz/\ell\mbz \times \mbz/\ell\mbz \not\subseteq \tilde{E}_p(\mbf_p)$. 
Thus, by Lemma \ref{acyclicityofellipticcurvemoduloprime}, we have
\begin{equation} \label{biconditionalforcyclicity}
\tilde{E}_p(\mbf_p) \text{ is cyclic} \; \Longleftrightarrow \; \forall \text{ prime }\ell \neq p, \, \frob_p \neq 1 \in \gal\left( \mbq(E[\ell])/\mbq \right),
\end{equation}
where here and henceforth, $\frob_p$ denotes any choice of Frobenius automorphism at $p$ in a given Galois group (which should be clear from context). In particular, we have
\begin{equation} \label{pisubEanfinitecondition}
\lim_{x \to \infty} \pi_{E,a,n}(x) < \infty \; \Longleftrightarrow \; \begin{matrix} \exists S \subseteq \{ \text{primes} \} \text{ with } | S | < \infty \text{ satisfying that } \forall p \notin S \text{ with} \\ p \equiv a \bmod{n}, \, \exists \ell \text{ prime and } \frob_p = 1 \in \gal\left( \mbq(E[\ell])/\mbq \right). \end{matrix}
\end{equation}
The constant $C_{E,a,n}$ encodes the conditional probability of the event \eqref{biconditionalforcyclicity}, given that $p \equiv a \bmod{n}$, as follows.  Grouping the primes $\ell$ on the right-hand side of \eqref{biconditionalforcyclicity} according to whether they divide a ``test level'' $m \in \mbn$, we are led to the biconditional
\begin{equation} \label{mlevelversion}
\tilde{E}_p(\mbf_p) \text{ is cyclic } \; \Longleftrightarrow \; \forall m \in \mbn \text{ with } p \nmid m \text{ and } \forall \text{ prime }\ell \mid m, \, \frob_p \neq 1 \in \gal\left( \mbq(E[\ell])/\mbq \right).
\end{equation}
To study the density of such primes while conditioning on $p \equiv a \bmod{n}$ (equivalently, conditioning on $\frob_p = \gs_a \in \gal\left( \mbq(\zeta_n)/\mbq \right)$), we define the following sets:
\begin{equation*}
\begin{split}
    S_{E,a,n}(m) &:= \left\{ \gs \in \gal\left( \mbq(E[m])\mbq(\zeta_n)/\mbq \right) : \gs |_{\mbq(\zeta_n)} = \gs_a \right\}, \\
    S_{E,a,n}'(m) &:= \left\{ \gs \in S_{E,a,n}(m) : \forall \ell \mid m, \; \gs |_{\mbq(E[\ell])} \neq 1 \right\}, \\
    S_{E,a,n}^{(d)}(m) &:= \left\{ \gs \in S_{E,a,n}(m) : \gs |_{\mbq(E[d])} = 1 \right\}.
\end{split}
\end{equation*}
Note that $S_{E,a,n}^{(1)}(m) = S_{E,a,n}(m)$.  The ``probability visible at level $m$'' that $\tilde{E}_p(\mbf_p)$ is cyclic is reflected in the right-hand condition in \eqref{mlevelversion} for a \emph{fixed} value of $m$, and is given by
\begin{equation} \label{probabilityforfixedm}
\prob_m\left( \tilde{E}_p(\mbf_p) \text{ is cyclic, given that } p \equiv a \bmod{n} \right) = \frac{ \left| S_{E,a,n}'(m) \right| }{\left|\gal\left(\mbq(E[m])\mbq(\zeta_n)/\mbq\right) \right|}.
\end{equation}
For any $d$ dividing $m$, we let \mbox{$\varpi_{m,d} : \gal\left( \mbq(E[m])\mbq(\zeta_n)/\mbq \right) \to \gal\left( \mbq(E[d])\mbq(\zeta_n)/\mbq \right)$} denote the restriction map.  Using the observation
$
| S_{E,a,n}^{(d)}(d) | = \gamma_{a,n}\left( \mbq(E[d]) \right)
$
(see \eqref{defofgammasuban}), we see that
\[
\frac{ | S_{E,a,n}^{(d)}(m) | }{|\gal\left(\mbq(E[m])\mbq(\zeta_n)/\mbq\right) |}
=
\frac{ \left| \varpi_{m,d}^{-1}\left(S_{E,a,n}^{(d)}(d) \right) \right| }{| \varpi_{m,d}^{-1}\left(\gal\left(\mbq(E[d])\mbq(\zeta_n)/\mbq\right) \right) |}
=
\frac{\gamma_{a,n}\left( \mbq(E[d]) \right)}{\left| \gal\left(\mbq(E[d])\mbq(\zeta_n)/\mbq\right) \right|}.
\]
Moreover, since $\ds S_{E,a,n}'(m) = S_{E,a,n}^{(1)}(m) - \bigcup_{\ell \mid m} S_{E,a,n}^{(\ell)}(m)$, we may apply inclusion-exclusion, concluding that
\begin{equation} \label{chebotarevinterpretation}
\frac{ \left| S_{E,a,n}'(m) \right| }{\left|\gal\left(\mbq(E[m])\mbq(\zeta_n)/\mbq\right) \right|} 
= \sum_{d \mid m} \frac{\mu(d)| S_{E,a,n}^{(d)}(m)|}{\left|\gal\left(\mbq(E[m])\mbq(\zeta_n)/\mbq\right) \right|}
= \sum_{d \mid m} \frac{\mu(d) \gamma_{a,n}\left( \mbq(E[d]) \right)}{\left| \gal\left(\mbq(E[d])\mbq(\zeta_n)/\mbq\right) \right|}.
\end{equation}
Thus, taking the limit in \eqref{probabilityforfixedm} as $m \to \infty$ through any sequence that is cofinal with respect to divisibility (for instance we may simply take $m_n := \ds \prod_{\ell \leq n} \ell^n$) we arrive at the heuristic density
\begin{equation} \label{CsubEanprime}
C_{E,a,n} = \lim_{n \to \infty} \sum_{d \mid m_n} \frac{\mu(d) \gamma_{a,n}\left( \mbq(E[d]) \right)}{\left| \gal\left(\mbq(E[d])\mbq(\zeta_n)/\mbq\right) \right|} = \sum_{d = 1}^\infty \frac{\mu(d) \gamma_{a,n}\left( \mbq(E[d]) \right)}{[\mbq(E[d])\mbq(\zeta_n) : \mbq ]}.
\end{equation}
\begin{remark} \label{SEanofmremark}
Since $\gal(\mbq(E[m])\mbq(\zeta_n)/\mbq)$ is isomorphic to the group
\[
\{ (g_m,g_n) \in \gal(\mbq(E[m])/\mbq) \times \gal(\mbq(\zeta_n)/\mbq) : g_m |_{\mbq(E[m]) \cap \mbq(\zeta_n)} = g_n |_{\mbq(E[m]) \cap \mbq(\zeta_n)} \},
\]
we may see that $S_{E,a,n}(m)$ is in one-to-one correspondence with the set
\[
\{ \gs \in \gal(\mbq(E[m])/\mbq) : \gs |_{\mbq(E[m]) \cap \mbq(\zeta_n)} = \gs_a |_{\mbq(E[m]) \cap \mbq(\zeta_n)} \},
\]
and similarly with the sets $S_{E,a,n}'(m)$ and $S_{E,a,n}^{(d)}(m)$.  We will make use of this later.
\end{remark}
\subsection{The constant $C_{E,a,n}$ as an ``almost Euler product.''}
We will presently describe the constant $C_{E,a,n}$ as a convergent Euler product multiplied by a certain rational number, which will allow us to verify (and refine) the biconditional statement \eqref{refinedsufficientforfinitelymanyprimes}.
In particular, we will be able to add the condition that any prime $\ell$ occurring on the right hand-side of \eqref{pisubEanfinitecondition} be restricted to lie in a finite set.  Toward this end, let us describe in some detail the nature of the image of the Galois representation $\rho_E$ in \eqref{defofrhosubE}.  In case $E$ has CM by the order $\mc{O}_{K,f} \subseteq \mc{O}_K$ of conductor $f \in \mbn$ inside an imaginary quadratic field $K$, the image of $\rho_E$ is not open inside the profinite group $\GL_2(\hat{\mbz})$, but it is open inside a particular subgroup, which we now specify, following \cite{lozanorobledoCM}. Let $\gD_K \in \mbz$ denote the discriminant of $K$ and define the integers $\gd = \gd_{K,f}$ and $\phi = \phi_{K,f}$ by
\[
(\gd,\phi) :=
\begin{cases} 
\left( \frac{\gD_K f^2}{4}, 0 \right) & \text{ if } \gD_K f^2 \equiv 0 \bmod{4}, \\
\left( \frac{(\gD_K -1)}{4} f^2, f \right) & \text{ if } \gD_K f^2 \equiv 1 \bmod{4}.
\end{cases}
\]
Then define the subgroups $\mc{C}_{\gd,\phi}(n) \subseteq  \mc{N}_{\gd,\phi}(n) \subseteq \GL_2(\mbz/n\mbz)$ by
\[
\begin{split}
\mc{C}_{\gd,\phi}(\mbz/n\mbz) &:= 
\left\{ \begin{pmatrix} a + b\phi & b \\ \gd b & a \end{pmatrix} : a, b \in \mbz/n\mbz, a^2 + \phi ab - \gd b^2 \in (\mbz/n\mbz)^\times \right\}, \\
\mc{N}_{\gd,\phi}(\mbz/n\mbz) &:= \left\langle \mc{C}_{\gd,\phi}(\mbz/n\mbz), \begin{pmatrix} -1 & 0 \\ \phi & 1 \end{pmatrix} \right\rangle.
\end{split}
\]
Finally, set $\mc{N}_{\gd,\phi}(\hat{\mbz}) := \ds \lim_{\leftarrow} \mc{N}_{\gd,\phi}(\mbz/n\mbz)$.  If $E$ has CM by the imaginary quadratic order $\mc{O}_{K,f}$ then, for an appropriate choice of basis, we have 
\begin{equation} \label{CMcontainment}
\rho_E(G_\mbq) \subseteq \mc{N}_{\gd,\phi}(\hat{\mbz}).
\end{equation}
(Henceforth we will assume that, in the CM case, the underlying choice of basis is made so that \eqref{CMcontainment} holds.) To uniformize notation, let us define $\mathbb{G}_E(\mbz/n\mbz)$ by
\begin{equation*}
    \mathbb{G}_E(\mbz/n\mbz) := 
    \begin{cases}
        \GL_2(\mbz/n\mbz) & \text{ if $E$ has no CM,} \\
        \mc{N}_{\gd,\phi}(\mbz/n\mbz) & \text{ if $E$ has CM by $\mc{O}_{K,f}$,}
    \end{cases}
\end{equation*}
and $\mathbb{G}_E(\hat{\mbz}) := \ds \lim_{\leftarrow} \mathbb{G}_E(\mbz/n\mbz)$.
Thanks to Serre's open image theorem in the non-CM case and class field theory in the CM case, we have
$    [ \mathbb{G}_E(\hat{\mbz}) : \rho_E(G_\mbq) ] < \infty$.  
It follows that there exists $m \in \mbn$ for which \begin{equation} \label{defofmsubE}
    \ker\left( \mathbb{G}_E(\hat{\mbz}) \to \mathbb{G}_E(\mbz/m\mbz)\right) \subseteq \rho_E\left( G_\mbq \right).
\end{equation}
We define $m_E \in \mbn$ to be the smallest $m \in \mbn$ for which \eqref{defofmsubE} holds, and call it the \emph{$\GL_2$-level} of the group $\rho_E(G_\mbq)$.

We are now ready to analyze the constant $C_{E,a,n}$ in further detail.  Suppose that $f : \mbn \to \mbc$ is any function for which there exists $M \in \mbn$ so that
\begin{equation} \label{almostmultiplicative}
\forall d_1, d_2 \in \mbn, \; \gcd(Md_1,d_2) = 1 \; \Longrightarrow \; f(d_1d_2) = f(d_1)f(d_2),
\end{equation}
and for which the infinite sum
$
\ds \sum_{d = 1}^\infty \mu(d) f(d)
$
converges absolutely.  Writing any $d \in \mbn$ as $d = d_1 d_2$ with $d_1$ only divisible by primes dividing $M$ and $\gcd(d_2,M) = 1$, it follows from \eqref{almostmultiplicative} that
\begin{equation} \label{wheretostickf}
\begin{split}
\sum_{d = 1}^\infty \mu(d) f(d) &= \left( \sum_{d_1 \mid M} \mu(d_1) f(d_1) \right) \left( \sum_{{\begin{substack} {d_2 \in \mbn \\ \gcd(d_2,M) = 1} \end{substack}}} \mu(d_2) f(d_2) \right) \\
&= \left( \sum_{d_1 \mid M} \mu(d_1) f(d_1) \right) \prod_{{\begin{substack} {\ell \text{ prime} \\ \ell \nmid M} \end{substack}}} \left( 1 - f(\ell) \right).
\end{split}
\end{equation}
We will apply the above with $\ds f(d) := \frac{\gamma_{a,n}\left( \mbq(E[d]) \right)}{\left[ \mbq(E[d]) \mbq(\zeta_n) : \mbq(\zeta_n) \right]}$ and $M = m_E$.  This application is justified by our next lemma.
\begin{lemma} \label{almostmultiplicativelemma}
Define the arithmetic function $f : \mbn \longrightarrow \mbr$ by $\ds f(d) := \frac{\gamma_{a,n}\left( \mbq(E[d]) \right)}{\left[ \mbq(E[d]) \mbq(\zeta_n) : \mbq(\zeta_n) \right]}$.  For any $d_1, d_2 \in \mbn$, we have
\begin{equation} \label{quasimultiplicative}
\gcd(d_1 m_E, d_2) = 1 \; \Longrightarrow \; f(d_1d_2) = f(d_1) f(d_2),
\end{equation}
where $m_E$ is defined as above to be the smallest $m \in \mbn$ for which \eqref{defofmsubE} holds.
\end{lemma}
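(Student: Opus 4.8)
The plan is to derive the quasi-multiplicativity \eqref{quasimultiplicative} from a direct-product decomposition of $\rho_E(G_\mbq)$ that isolates the ``$d_2$-part'', carried through the base change by $\mbq(\zeta_n)$. Assume $\gcd(d_1 m_E, d_2) = 1$, and abbreviate $L := \mbq(\zeta_n)$, $F_i := \mbq(E[d_i])$, and $F := \mbq(E[d_1 d_2]) = F_1 F_2$. Since $\gamma_{a,n}$ takes values in $\{0,1\}$, the desired identity $f(d_1 d_2) = f(d_1) f(d_2)$ will follow from two claims: \textbf{(i)} $[FL:L] = [F_1 L : L]\,[F_2 L : L]$; and \textbf{(ii)} $F \cap L = (F_1 \cap L)(F_2 \cap L)$. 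Indeed, if $\gamma_{a,n}(F_i) = 0$ for some $i$, then $\gs_a$ does not fix $F_i \cap L \subseteq F \cap L$ pointwise, so $\gamma_{a,n}(F) = 0$ and both sides of \eqref{quasimultiplicative} vanish; and if $\gamma_{a,n}(F_1) = \gamma_{a,n}(F_2) = 1$, then by (ii) $\gs_a$ fixes $F \cap L$ pointwise, so $\gamma_{a,n}(F) = 1$, while (i) accounts for the denominators.

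To prove (i) and (ii), the key step is a structural observation on the image of $\rho_E$. Using $\hat{\mbz} \cong \prod_\ell \mbz_\ell$ and the induced decomposition $\mathbb{G}_E(\hat{\mbz}) \cong \prod_\ell \mathbb{G}_E(\mbz_\ell)$, group the factors into $\mathbb{G}_E(\hat{\mbz}) = \mc{G}_{d_2} \times \mc{G}^{(d_2)}$, where $\mc{G}_{d_2} := \prod_{\ell \mid d_2} \mathbb{G}_E(\mbz_\ell)$ and $\mc{G}^{(d_2)} := \prod_{\ell \nmid d_2} \mathbb{G}_E(\mbz_\ell)$. Since $\gcd(d_2, m_E) = 1$, the reduction map $\mathbb{G}_E(\hat{\mbz}) \to \mathbb{G}_E(\mbz/m_E\mbz)$ factors through the projection onto $\mc{G}^{(d_2)}$, so its kernel contains $\mc{G}_{d_2} \times \{1\}$; by \eqref{defofmsubE} this kernel lies in $\rho_E(G_\mbq)$, so $\mc{G}_{d_2} \times \{1\} \subseteq \rho_E(G_\mbq)$, and a standard fact about subgroups of a direct product forces $\rho_E(G_\mbq) = \mc{G}_{d_2} \times \mc{H}$, where $\mc{H}$ is the image of $\rho_E(G_\mbq)$ in $\mc{G}^{(d_2)}$. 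Moreover, because $\det \circ \rho_E$ is the cyclotomic character, $\rho_E$ identifies $\gal(\overline{\mbq}/L)$ with $\{ g \in \rho_E(G_\mbq) : \det g \equiv 1 \bmod n \}$; writing $n = n'' n'$ with $\gcd(n'',n') = 1$ and $n''$ the largest divisor of $n$ supported on primes dividing $d_2$, the condition $\det g \equiv 1 \bmod n$ splits across $\mc{G}_{d_2} \times \mc{H}$, yielding
\[
\rho_E( \gal(\overline{\mbq}/L) ) = \mc{H}_{d_2} \times \mc{H}', \quad \mc{H}_{d_2} := \{ g \in \mc{G}_{d_2} : \det g \equiv 1 \bmod n'' \}, \ \ \mc{H}' := \{ g \in \mc{H} : \det g \equiv 1 \bmod n' \}.
\]

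Granting this, (i) follows by reducing $\mc{H}_{d_2} \times \mc{H}'$ modulo $d_1 d_2$: since $\gcd(d_1,d_2) = 1$, the factor $\mc{H}_{d_2}$ sees only the reduction mod $d_2$ and $\mc{H}'$ only the reduction mod $d_1$, so $\rho_{E,d_1 d_2}(\gal(\overline{\mbq}/L)) = \rho_{E,d_1}(\gal(\overline{\mbq}/L)) \times \rho_{E,d_2}(\gal(\overline{\mbq}/L))$, and taking orders (via $\gal(\mbq(E[m])L/L) \cong \rho_{E,m}(\gal(\overline{\mbq}/L))$) gives (i). The same decomposition $\rho_E(G_\mbq) = \mc{G}_{d_2} \times \mc{H}$ gives $\rho_{E,d_1 d_2}(G_\mbq) = \rho_{E,d_1}(G_\mbq) \times \rho_{E,d_2}(G_\mbq)$, hence $[F:\mbq] = [F_1:\mbq][F_2:\mbq]$ and $F_1 \cap F_2 = \mbq$. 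For (ii): since $L/\mbq$ is Galois, $\gal(F_i L/L) \cong \gal(F_i/F_i \cap L)$ and $\gal(FL/L) \cong \gal(F/F \cap L)$; feeding in (i) and $[F:\mbq]=[F_1:\mbq][F_2:\mbq]$ yields $[F \cap L:\mbq] = [F_1 \cap L:\mbq]\,[F_2 \cap L:\mbq]$. Since $(F_1 \cap L) \cap (F_2 \cap L) \subseteq F_1 \cap F_2 = \mbq$ and both fields are abelian over $\mbq$, their compositum has degree $[F_1 \cap L:\mbq][F_2 \cap L:\mbq] = [F \cap L:\mbq]$ over $\mbq$ and sits inside $F \cap L$, so it equals $F \cap L$; this is (ii), and the lemma follows.

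I expect the main obstacle to be that $\gcd(d_1 m_E, d_2) = 1$ does \emph{not} force $d_2$ to be coprime to the conductor $N_E$ or to $n$, so one cannot merely split off a ``$d_2$-part'' of $\rho_E$ over $\mbq$ and quote its fullness; in particular $F_i = \mbq(E[d_i])$ may share cyclotomic subfields with $\mbq(\zeta_n)$ beyond those forced by the Weil pairing. The argument goes through because the determinant condition cutting out $\gal(\overline{\mbq}/\mbq(\zeta_n))$ inside $G_\mbq$ respects the splitting of $\hat{\mbz}^\times$ into its $d_2$-part and its prime-to-$d_2$-part, which is precisely what preserves the direct-product structure after base change to $\mbq(\zeta_n)$ and makes the degree count underlying (ii) come out multiplicative.
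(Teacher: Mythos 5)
Your proof is correct, and it takes a genuinely more direct route than the paper's. Both arguments rest on the same structural fact, namely that the hypothesis $\gcd(d_2,m_E)=1$ forces the direct-product decomposition $\rho_E(G_\mbq)=\mc{G}_{d_2}\times\mc{H}$ with a full $d_2$-adic factor (this is the paper's \eqref{fulldirectproduct}); but the paper \emph{deduces} the quasi-multiplicativity by contradiction: it first reduces both the numerator and denominator statements to the claim that the strict inclusion $(\mbq(E[d_1])\cap K)\cdot(\mbq(E[d_2])\cap K)\subsetneq\mbq(E[d_1d_2])\cap K$ cannot occur, then assumes it does, passes to a minimal offending triple $(d_1,d_2,n)$ to control the conductor, introduces $N=\lcm(n,d_1d_2)$, and runs a Goursat-style fibered-product analysis in two cases on the pair of homomorphisms $\psi,\chi$ to reach a contradiction. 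You instead prove the two multiplicativity statements (i) and (ii) directly, by observing that the determinant condition cutting out $\gal(\overline{\mbq}/\mbq(\zeta_n))$ splits across the factor decomposition $\hat{\mbz}^\times=\prod_{\ell\mid d_2}\mbz_\ell^\times\times\prod_{\ell\nmid d_2}\mbz_\ell^\times$, so that $\rho_E(\gal(\overline{\mbq}/L))=\mc{H}_{d_2}\times\mc{H}'$ is still a direct product, and then the reduction modulo $d_1d_2$ factors through the CRT isomorphism as a product of the mod-$d_1$ and mod-$d_2$ reductions. This gives (i), and (ii) follows from (i) and $F_1\cap F_2=\mbq$ by a degree count. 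The payoff of your approach is that it avoids the minimization step, the conductor identity \eqref{conductor}, and the two-case analysis; the paper's approach has the minor advantage of isolating the single field-theoretic statement \eqref{todisprove} as the obstruction, which ties more transparently to the simultaneous treatment of numerator and denominator. One small presentational point: your Case-(i) degree count implicitly uses $[FL:L]=[F:F\cap L]$; you invoke this in the proof of (ii), but it is also what converts $|\rho_{E,d_1d_2}(\gal(\overline{\mbq}/L))|$ into $[FL:L]$, so it is worth flagging where it is first used.
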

\begin{proof}
It suffices to prove \eqref{quasimultiplicative} with $f$ replaced by its numerator (resp. by its denominator).  For any $d, d' \in \mbn$, with $d$ dividing $d'$, we have
\[
\gs_a |_{\mbq(E[d]) \cap \mbq(\zeta_n)} \neq 1 \; \Longrightarrow \; \gs_a |_{\mbq(E[d']) \cap \mbq(\zeta_n)} \neq 1,
\]
and thus $\gamma_{a,n}\left( \mbq(E[d]) \right) = 0$ implies that $\gamma_{a,n}\left( \mbq(E[d']) \right) = 0$.  Thus, the only way \eqref{quasimultiplicative} can fail with $\gamma_{a,n}\left( \mbq(E[d]) \right)$ in place of $f(d)$ is in case $\gamma_{a,n}\left( \mbq(E[d_1]) \right) = \gamma_{a,n}\left( \mbq(E[d_2]) \right) = 1$ but $\gamma_{a,n}\left( \mbq(E[d_1d_2]) \right) = 0$.  If this is the case, it follows that
\begin{equation} \label{todisprove}
\left( \mbq(E[d_1]) \cap \mbq(\zeta_n) \right) \cdot \left( \mbq(E[d_2]) \cap \mbq(\zeta_n) \right) \subsetneq \left( \mbq(E[d_1d_2]) \cap \mbq(\zeta_n) \right).
\end{equation}
We will presently show that this strict containment cannot happen, but first let us consider in tandem the denominator of $f$.  In what follows, we set
\[
K := \mbq(\zeta_n).
\]
By Galois theory, we have
\[
\left[ K(E[d_1d_2]) : K \right] \neq \left[ K(E[d_1]) : K \right] \left[ K(E[d_2]) : K \right] \; \Leftrightarrow \; K(E[d_1]) \cap K(E[d_2]) \neq K.
\]
Assuming that $\gcd(d_1 m_E, d_2) = 1$, a Galois theoretic analysis of an appropriate field diagram further implies that 
\[
\left[ \mbq(E[d_1d_2]) \cap K : \left( \mbq(E[d_1]) \cap K \right) \cdot \left( \mbq(E[d_2]) \cap K \right) \right] 
=
\left[ K( E[d_1] ) \cap K( E[d_2] ) : K \right].
\]
Therefore, if \eqref{quasimultiplicative} fails then \eqref{todisprove} must hold; our goal is now to prove that \eqref{todisprove} cannot hold if $\gcd(d_1m_E,d_2) = 1$.  

Assume for the sake of contradiction that \eqref{todisprove} holds for some $d_1, d_2 \in \mbn$ with $\gcd(d_1m_E,d_2) = 1$.  Replacing each of $d_1$, $d_2$ and $n$ with a proper divisor as necessary, we may and will assume that, for any proper divisor $d_2'$ of $d_2$, \eqref{todisprove} is false (i.e. the strict inclusion becomes an equality) when $d_2$ is replaced with $d_2'$ and likewise with any proper divisor $d_1'$ of $d_1$ and any proper divisor $n'$ of $n$.  In particular, this together with \eqref{todisprove} implies that
\begin{equation} \label{conductor}
n = \cond\left( \mbq(E[d_1d_2]) \cap \mbq(\zeta_n) \right).
\end{equation}
We now write $N := \lcm(n,d_1d_2)$ and consider the image $\rho_{E,N}(G_\mbq) =: G(N) \subseteq \GL_2(\mbz/N\mbz)$.  Write $N = N_1N_2$ (resp. $n = n_1n_2$) with $N_1$ (resp. $n_1$) supported on primes dividing $m_E$ and $\gcd(N_2,m_E) = 1$ (resp. $\gcd(n_2,m_E) = 1$).
We then have $d_2 \mid N_2$ and, since $\gcd(N_1m_E,N_2) = 1$, under the isomorphism of the Chinese remainder theorem,
\begin{equation} \label{fulldirectproduct}
G(N_1N_2) \simeq G(N_1) \times \GL_2(\mbz/N_2\mbz) \subseteq \pi_{N_1,d_1}^{-1}\left( G(d_1) \right) \times \GL_2(\mbz/N_2\mbz).
\end{equation}

Let us set $A := \gal\left( \mbq(E[d_1d_2]) \cap \mbq(\zeta_n) / \mbq \right)$ and define the surjective group homomorphisms
\[
\psi, \chi : \pi_{N_1,d_1}^{-1}\left( G(d_1) \right) \times \GL_2(\mbz/N_2\mbz) \twoheadrightarrow A,
\]
by declaring $\psi$ to be the reduction map modulo $d_1d_2$ followed by the map which corresponds under $G(d_1) \times \GL_2(\mbz/d_2\mbz) \simeq G(d_1d_2) \simeq \gal\left( \mbq(E[d_1d_2])/\mbq \right)$ to the restriction map and setting $\chi$ be the determinant map modulo $n$ followed by the map which corresponds under $(\mbz/n\mbz)^\times \simeq \gal(\mbq(\zeta_n)/\mbq)$ to the restriction map onto $\gal(\mbq(E[d_1d_2]) \cap \mbq(\zeta_n)/\mbq)$.  

We next define $\psi_1, \chi_1 : \pi_{N_1,d_1}^{-1}\left( G(d_1) \right) \to A$ by $\psi_1(g_1) := \psi(g_1,1)$ and $\chi_1 (g_1) := \chi ( g_1,1)$ and $\psi_2, \chi_2  : \GL_2(\mbz/N_2\mbz) \to A$ by $\psi_2(g_2) := \psi(1,g_2)$ and $\chi_2 (g_2) := \chi (1,g_2)$.  We then clearly have $\psi(g_1,g_2) = \psi_1(g_1)\psi_2(g_2)$ and $\chi (g_1,g_2) = \chi_1(g_1)\chi_2(g_2)$, and
\begin{equation} \label{fiberedproductform}
G(N_1N_2) \subseteq \{ (g_1,g_2) \in \pi_{N_1,d_1}^{-1}\left( G(d_1) \right) \times \GL_2(\mbz/N_2\mbz) : \psi_1(g_1)\chi_1^{-1}(g_1) = \psi_2^{-1}(g_2) \chi_2(g_2) \}.
\end{equation}
We will now show that this contradicts \eqref{fulldirectproduct}. \\

\noindent \textbf{Case 1: $\chi_2 \neq \psi_2$.}  In this case, the right-hand side of \eqref{fiberedproductform} is either a non-trivial fibered product, or else $G(N_2) \subsetneq \GL_2(\mbz/N_2\mbz)$, contradicting \eqref{fulldirectproduct} either way.  Thus, we arrive at a contradiction in this case. \\

\noindent \textbf{Case 2: $\chi_2 = \psi_2$.}  In this case, by \eqref{conductor}, we have $\cond \chi_2 = n_2$, and since $\psi_2$ factors through projection modulo $d_2$, it follows that $n_2$ divides $d_2$, and so $N_2 = d_2$.  Furthermore, by \eqref{fiberedproductform} we have
$
(g_1,g_2) \in G(N_1N_2) \; \Rightarrow \; \psi_1(g_1) = \chi_1(g_1),
$
and so 
\begin{equation} \label{keylevelN1containments}
\mbq(\zeta_{n_1}) \supseteq \mbq(E[N_1])^{\ker \chi_1} = \mbq(E[N_1])^{\ker \psi_1} \subseteq \mbq(E[d_1]),
\end{equation}
the last subset containment implied by $\ker \psi_1 \supseteq \ker \pi_{N_1,d_1}$.  We now extend $\psi_1$ to all of $\pi_{N_1,d_1}^{-1}\left( G(d_1) \right) \times \GL_2(\mbz/N_2\mbz)$ via $\tilde{\psi}_1\left( (g_1,g_2) \right) := \psi_1(g_1)$, so that, by \eqref{keylevelN1containments}, 
\begin{equation} \label{keylevelNcontainment}
\mbq(E[N])^{\ker \tilde{\psi}_1} = \mbq(E[N_1])^{\ker \psi_1} \subseteq \mbq(E[d_1]) \cap \mbq(\zeta_n).
\end{equation}
Considering also the map $\det \circ \pi_{N,d_2} : \pi_{N_1,d_1}^{-1}\left( G(d_1) \right) \times \GL_2(\mbz/N_2\mbz) \twoheadrightarrow (\mbz/d_2\mbz)^\times$, i.e. the determinant map modulo $d_2$, we will now observe that
\begin{equation} \label{kernelcontainment}
\ker \psi \supseteq \ker \tilde{\psi}_1 \cap \ker \left( \det \circ \pi_{N,d_2} \right).
\end{equation}
Indeed, if $(g_1,g_2) \in \ker \tilde{\psi}_1 \cap \ker \left( \det \circ \pi_{N,d_2} \right)$, then $\det g_2 \equiv 1 \bmod{d_2}$, and so $\chi_2(g_2) = 1 = \psi_2(g_2)$.  Therefore $\psi\left( (g_1,g_2) \right) = \psi_1(g_1) \psi_2(g_2) = \tilde{\psi}_1((g_1,g_2)) \psi_2(g_2) = 1$, establishing \eqref{kernelcontainment}.  By the Galois correspondence together with \eqref{keylevelNcontainment},  this implies that
\[
\begin{split}
\mbq(E[d_1d_2]) \cap \mbq(\zeta_n) = \mbq(E[N])^{\ker \psi} &\subseteq \mbq(E[N])^{\ker \tilde{\psi}_1} \cdot \mbq(E[N])^{\ker \left( \det \circ \pi_{N,d_2} \right)} \\
&\subseteq \left( \mbq(E[d_1]) \cap \mbq(\zeta_n) \right) \cdot \mbq(\zeta_{d_2}) \\
&\subseteq \left( \mbq(E[d_1]) \cap \mbq(\zeta_n) \right) \cdot \left( \mbq(E[d_2]) \cap \mbq(\zeta_{n}) \right),
\end{split}
\]
contradicting \eqref{todisprove}.  Thus, we have also arrived at a contradiction in this second case.  This establishes in all cases that \eqref{todisprove} is inconsistent with $\gcd(d_1m_E,d_2) = 1$, as desired.
\end{proof}
Returning to \eqref{wheretostickf}, we set $\ds f(d) := \frac{\gamma_{a,n}\left( \mbq(E[d]) \right)}{\left[ \mbq(E[d]) \mbq(\zeta_n) : \mbq(\zeta_n) \right]}$ and $M = m_E$.  By Lemma \ref{almostmultiplicativelemma} together with \eqref{CsubEanprime} and \eqref{chebotarevinterpretation}, we conclude that
\[
C_{E,a,n} =  \frac{|S_{E,a,n}'\left( \rad(m_E) \right)|}{|\gal\left( \mbq(E[\rad(m_E)])\mbq(\zeta_n)/ \mbq\right) |} \cdot \prod_{{\begin{substack} {\ell \text{ prime} \\ \ell \nmid m_E \\ \ell \mid \gcd(n,a-1)} \end{substack}}} \left( 1 -  \frac{ \phi(\ell) }{| \mathbb{G}_E(\mbz/\ell\mbz) |} \right) \cdot \prod_{{\begin{substack} {\ell \text{ prime} \\ \ell \nmid nm_E} \end{substack}}} \left( 1 -  \frac{ 1 }{| \mathbb{G}_E(\mbz/\ell\mbz) |} \right),
\]
where $\ds \rad(m_E) := \prod_{\ell \mid m_E} \ell$.  Since the infinite product converges, it follows that
\[
\begin{split}
C_{E,a,n} = 0 \; &\Longleftrightarrow \; S_{E,a,n}'\left( \rad(m_E) \right) = \emptyset \\
&\Longleftrightarrow \begin{pmatrix} \exists d \mid \rad(m_E) \text{ such that } \forall \gs \in \gal\left(\mbq(E[d])\mbq(\zeta_n)/\mbq\right) \text{ with} \\ \gs |_{\mbq(\zeta_n)} = \gs_a, \, \exists \text{ a prime } \ell \mid d \text{ for which }  \gs |_{\mbq(E[\ell])} = 1 \end{pmatrix}.
\end{split}
\]
In particular, conjecturally, any prime $\ell$ appearing on the right-hand side of 
\eqref{pisubEanfinitecondition} may be assumed to divide $\rad(m_E)$, and \eqref{refinedsufficientforfinitelymanyprimes} may be stated in the refined form
\begin{equation*} 
\begin{pmatrix} \exists d \mid \rad(m_E) \text{ such that } \forall \gs \in \gal\left(\mbq(E[d])\mbq(\zeta_n)/\mbq\right) \text{ with} \\ \gs |_{\mbq(\zeta_n)} = \gs_a, \, \exists \text{ a prime } \ell \mid d \text{ for which }  \gs |_{\mbq(E[\ell])} = 1 \end{pmatrix} \; \Longleftrightarrow \; \lim_{x \to \infty} \pi_{E,a,n}(x) < \infty.
\end{equation*}
\subsection{Proof of Theorem \ref{secondthm}}
To prove Theorem \ref{secondthm}, we will first make a definition and then prove a proposition that reduces the proof to a computation.

\begin{Definition} \label{acyclicitydef}
We call a subgroup $G \subseteq \GL_2(\hat{\mbz})$ an \emph{acyclicity group} if there is a triple $(E,a,n)$ where $E$ is an elliptic curve over $\mbq$ with  $\rho_E(G_\mbq) \, \dot = \, G$ and $a,n \in \mbn$ are relatively prime and satisfy $C_{E,a,n} = 0$.  We will call such a triple $(E,a,n)$ an \emph{acyclicity witness for $G$}.  We define the \emph{acyclicity level} of $G$ to be the smallest $d \in \mbn$ for which 
\begin{equation} \label{emptycondition}
S_{E,a,n}'(d) = \emptyset,
\end{equation}
as $(E,a,n)$ ranges over all acyclicity witnesses of the group $G$.  When $d$ is the acyclicity level of $G$, we call any acyclicity witness $(E,a,n)$ of $G$ that satisfies \eqref{emptycondition}, and for which $n$ is minimal with respect to this condition, a \emph{minimal acyclicity witness for $G$}. 
\end{Definition}
\begin{remark} \label{acyclicityremark}
Let $E$ be an elliptic curve over $\mbq$.  Directly from Definition \ref{acyclicitydef}, we have
\[
\exists \, (a,n) \in \mbn^2 \text{ relatively prime with } C_{E,a,n} = 0 \; \Longleftrightarrow \; \rho_E(G_\mbq) \text{ is an acyclicity group.}
\]
Furthermore, when this is the case, the acyclicity level of $\rho_E(G_\mbq)$ is the smallest $d \in \mbn$ for which the left-hand condition in \eqref{refinedsufficientforfinitelymanyprimes} holds for some (appropriately chosen) $a \in \mbz$ and $n \in \mbn$.  In particular,
\begin{equation*} 
\begin{pmatrix} \exists \text{ a prime } \ell \text{ and coprime } a,n \in \mbn \text{ such} \\
\text{that } \mbq(E[\ell]) \subseteq \mbq(\zeta_n) \text{ and } \gs_a |_{\mbq(E[\ell])} = 1
\end{pmatrix}
\; \Longleftrightarrow \;
\begin{pmatrix} \rho_E(G_\mbq) \text{ is an acyclicity group} \\ \text{with \emph{prime} acyclicity level}
\end{pmatrix}
\end{equation*}
whereas
\begin{equation*} 
\begin{pmatrix} 
\exists  d \in \mbn_{\geq 2} \text{ and } \exists \text{ coprime } a,n \in \mbn \text{ such that} \\ \forall \gs \in \gal\left(\mbq(E[d])\mbq(\zeta_n)/\mbq\right) \text{ with } \gs |_{\mbq(\zeta_n)} = \gs_a, \\ \exists \text{ a prime } \ell \mid d \text{ for which }  \gs |_{\mbq(E[\ell])} = 1, \\
\text{but this condition fails whenever $d$ is prime}
\end{pmatrix}
\; \Longleftrightarrow \;
\begin{pmatrix} \rho_E(G_\mbq) \text{ is an acyclicity group} \\ \text{with \emph{composite} acyclicity level}
\end{pmatrix}.
\end{equation*}
Finally, since the constant $C_{E,a,n}$ only depends on the fields $\mbq(E[d])$ for \emph{square-free} values of $d$, it follows that the acyclicity level of an acyclicity group is necessarily square-free.
\end{remark}
Since any acyclicity witness $(E,a,n)$ satisfies $\rho_E(G_\mbq) \, \dot = \,  G$, the property of being an acyclicity group is inherent in the group $G$.  Proposition \ref{acyclicitylemma} below
lays out some conditions that $G$ must satisfy in order to be an acyclicity group with composite acyclicity level.  First, we state a key group-theoretical lemma, whose proof is straightforward.
\begin{lemma} \label{purelygrouptheorylemma}
Let $G$ be a group and let $N_1, N_2 \, \unlhd \, G$ be normal subgroups of $G$.  Suppose that $gN_1, g'N_1 \in G/N_1$ satisfy $gN_1N_2 = g'N_1N_2$.  Then there exist $n \in N_1$ for which $gnN_2 = g'N_2$. 
\end{lemma}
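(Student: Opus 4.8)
The plan is to collapse the hypothesis into a single membership statement and then exhibit the element $n$ directly. The first step is to observe that, because $N_1$ and $N_2$ are normal in $G$, the product $N_1 N_2$ is a subgroup of $G$; consequently the coset equality $g N_1 N_2 = g' N_1 N_2$ in the hypothesis (equivalently, the equality in $G/N_1$ of the cosets of the subgroup $N_1 N_2 / N_1$ represented by $g N_1$ and $g' N_1$) is nothing other than the assertion that $g^{-1} g' \in N_1 N_2$.

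The second step is then immediate. Write $g^{-1} g' = n_1 n_2$ with $n_1 \in N_1$ and $n_2 \in N_2$, and set $n := n_1 \in N_1$. Then $(g n)^{-1} g' = n_1^{-1} (g^{-1} g') = n_1^{-1} n_1 n_2 = n_2 \in N_2$, which is exactly the statement that $g n N_2 = g' N_2$. This produces the required $n$.

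There is essentially no obstacle here, and the lemma's label ("purely group-theory lemma", with the remark that its proof is straightforward) reflects this; the only points to keep track of are the standard facts that normality of $N_1$ and $N_2$ makes $N_1 N_2$ a subgroup (so that the hypothesis can legitimately be phrased as membership in $N_1 N_2$) and that the two natural readings of the hypothesis — as a coset equality in $G$ modulo $N_1 N_2$, or as a coset equality in $G/N_1$ modulo $N_1 N_2/N_1$ — coincide. Both reduce to $g^{-1} g' \in N_1 N_2$, after which the factorization $g^{-1} g' = n_1 n_2$ completes the argument.
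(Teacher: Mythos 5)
Your proof is correct, and the paper itself gives no proof (it merely remarks that the argument is straightforward). Your reduction of the coset hypothesis to $g^{-1}g' \in N_1N_2$ followed by the factorization $g^{-1}g' = n_1 n_2$ is exactly the intended elementary argument.
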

\begin{cor} \label{keyacyclicitycor}
Let $E$ be an elliptic curve over $\mbq$ and let $d, d', n \in \mbn$ with $d'$ a divisor of $d$.  If $\gs, \gs' \in \gal(\mbq(E[d'])/\mbq)$ satisfy $\gs |_{\mbq(E[d']) \cap \mbq(\zeta_n)} = \gs' |_{\mbq(E[d']) \cap \mbq(\zeta_n)}$, then there exist lifts $\tilde{\gs}, \tilde{\gs}' \in \gal(\mbq(E[d])/\mbq)$ of $\gs, \gs'$ respectively which satisfy $\tilde{\gs} |_{\mbq(E[d]) \cap \mbq(\zeta_n)} = \tilde{\gs}' |_{\mbq(E[d]) \cap \mbq(\zeta_n)}$.
\end{cor}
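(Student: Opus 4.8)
The plan is to apply Lemma~\ref{purelygrouptheorylemma} to $G := \gal(\mbq(E[d])/\mbq)$ with the normal subgroups
\[
N_1 := \gal\left( \mbq(E[d])/\mbq(E[d']) \right), \qquad N_2 := \gal\left( \mbq(E[d])/\left( \mbq(E[d]) \cap \mbq(\zeta_n) \right) \right).
\]
Since $d'$ divides $d$, the field $\mbq(E[d'])$ is a subfield of $\mbq(E[d])$ that is Galois over $\mbq$, so $N_1 \unlhd G$ and $G/N_1 \simeq \gal(\mbq(E[d'])/\mbq)$; likewise $\mbq(E[d]) \cap \mbq(\zeta_n)$, being an intersection of two Galois (indeed abelian) extensions of $\mbq$, is itself Galois over $\mbq$, so $N_2 \unlhd G$. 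The next step is to identify the fixed field of $N_1 N_2$: by the Galois correspondence, $\mbq(E[d])^{N_1 N_2} = \mbq(E[d])^{N_1} \cap \mbq(E[d])^{N_2} = \mbq(E[d']) \cap \left( \mbq(E[d]) \cap \mbq(\zeta_n) \right) = \mbq(E[d']) \cap \mbq(\zeta_n)$, the last equality because $\mbq(E[d']) \subseteq \mbq(E[d])$.

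With these identifications, the hypothesis becomes a coset statement. Because $\mbq(E[d'])/\mbq$ is Galois and contained in $\mbq(E[d])/\mbq$, the restriction map $G \to \gal(\mbq(E[d'])/\mbq)$ is surjective; choose $g, g' \in G$ restricting to $\gs, \gs'$ respectively, i.e. with $gN_1 = \gs$ and $g'N_1 = \gs'$ under $G/N_1 \simeq \gal(\mbq(E[d'])/\mbq)$. Tracing through $G \to G/N_1 \to G/(N_1N_2) \simeq \gal\left( (\mbq(E[d'])\cap\mbq(\zeta_n))/\mbq \right)$, the image of $g$ is exactly $\gs|_{\mbq(E[d'])\cap\mbq(\zeta_n)}$, and similarly for $g'$; hence the assumption $\gs|_{\mbq(E[d'])\cap\mbq(\zeta_n)} = \gs'|_{\mbq(E[d'])\cap\mbq(\zeta_n)}$ is precisely $gN_1N_2 = g'N_1N_2$. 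Lemma~\ref{purelygrouptheorylemma} then produces $\nu \in N_1$ with $g\nu N_2 = g'N_2$, and I would set $\tilde{\gs} := g\nu$ and $\tilde{\gs}' := g'$. Since $\nu \in N_1$, $\tilde{\gs}$ has the same image as $g$ in $G/N_1$, so $\tilde{\gs}$ restricts to $\gs$ on $\mbq(E[d'])$, and trivially $\tilde{\gs}'$ restricts to $\gs'$; and $\tilde{\gs}N_2 = \tilde{\gs}'N_2$ says exactly $\tilde{\gs}|_{\mbq(E[d])\cap\mbq(\zeta_n)} = \tilde{\gs}'|_{\mbq(E[d])\cap\mbq(\zeta_n)}$, which is the claim.

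There is no genuine obstacle here: once one checks the two bookkeeping points — that $\mbq(E[d]) \cap \mbq(\zeta_n)$ is Galois over $\mbq$ (so $N_2 \unlhd G$) and that $\mbq(E[d])^{N_1 N_2} = \mbq(E[d']) \cap \mbq(\zeta_n)$ — the corollary is a direct transcription of Lemma~\ref{purelygrouptheorylemma} through the Galois correspondence, so I would present it as such.
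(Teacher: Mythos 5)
Your proof is correct and uses exactly the same decomposition as the paper: $G = \gal(\mbq(E[d])/\mbq)$, with $N_1, N_2$ the kernels of the restriction maps to $\gal(\mbq(E[d'])/\mbq)$ and $\gal\left((\mbq(E[d]) \cap \mbq(\zeta_n))/\mbq\right)$ respectively, followed by an application of Lemma~\ref{purelygrouptheorylemma}. The paper states only this setup and leaves the translation through the Galois correspondence implicit; you have simply written out those routine verifications in full.
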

\begin{proof}
We apply Lemma \ref{purelygrouptheorylemma} with $G := \gal(\mbq(E[d])/\mbq)$, $N_1 := \ker \pi_1$ and $N_2 := \ker \pi_2$, where
\[
\begin{split}
    & \pi_1 : \gal(\mbq(E[d])/\mbq) \longrightarrow \gal(\mbq(E[d'])/\mbq), \\
    & \pi_2 : \gal(\mbq(E[d])/\mbq) \longrightarrow \gal(\mbq(E[d] \cap \mbq(\zeta_n))/\mbq) \\
\end{split}
\]
are the restriction maps.
\end{proof}
\begin{proposition} \label{acyclicitylemma}
Let $G \subseteq \GL_2(\hat{\mbz})$ be an acyclicity group with \textbf{composite} acyclicity level $d$. Then there exists a surjective group homomorphism $\chi : G(d) \twoheadrightarrow A$ onto an abelian group $A$ whose kernel $N(d) := \ker \chi$ satisfies the following three properties:
\begin{enumerate}
\item Defining $\tilde{N}(d) := \prod_{\ell \mid d} N(\ell)$ and viewing $N(d) \subseteq \tilde{N}(d)$ via the Chinese remainder theorem,
there exists an element $\tau \in \tilde{N}(d) \cap G(d)$ such that, for each $\gs \in \tau N(d) \subseteq G(d)$,
there exists a prime $\ell \mid d$ for which $\gs \equiv I \bmod{\ell}$, and for each $\ell \mid d$
there exists $\gs' \in \tau N(d)$ such that, for each prime $\ell'$ dividing $d/\ell$, $\gs' \not\equiv I \bmod{\ell'}$.
\item For each prime $\ell$ dividing $d$, we have $| N(\ell) | > 1$.
\item For any prime $\ell$ dividing $d$, writing $d = \ell \cdot d'$ (with $\ell \nmid d'$), we have $N(d) \cap \ker \pi_{d,d'} = \{ 1 \}$.  In particular, the group $N(\ell)$ is isomorphic to a quotient of the group $N(d')$.
\end{enumerate}
\end{proposition}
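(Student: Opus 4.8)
The plan is to reduce everything to the Galois theory of a single division field, working with a minimal acyclicity witness. Choose a minimal acyclicity witness $(E,a,n)$ for $G$, so that $\rho_E(G_\mbq)\,\dot=\,G$, $\gcd(a,n)=1$, $C_{E,a,n}=0$, and $S_{E,a,n}'(d)=\emptyset$ with $d$ the acyclicity level of $G$, which by hypothesis is composite (and square-free, by \remarkref{acyclicityremark}). After conjugating $G$ I may assume a basis for which $G(d)=\rho_{E,d}(G_\mbq)$ is literally $\gal(\mbq(E[d])/\mbq)$, so that for $\sigma$ in this group the congruence $\sigma\equiv I\bmod\ell$ means $\sigma|_{\mbq(E[\ell])}=1$ (in the CM case the only difference is that the matrix group $G(d)$ lies in $\mathbb{G}_E(\mbz/d\mbz)$ rather than all of $\GL_2(\mbz/d\mbz)$, which affects nothing below). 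Set $F:=\mbq(E[d])\cap\mbq(\zeta_n)$; since $\mbq(\zeta_n)/\mbq$ is abelian, $F/\mbq$ is Galois with abelian group, so I take $A:=\gal(F/\mbq)$ with $\chi:G(d)\twoheadrightarrow A$ the restriction map, whence $N(d):=\ker\chi=\gal(\mbq(E[d])/F)$. By \remarkref{SEanofmremark}, $S_{E,a,n}(d)$ corresponds to the \emph{single} $N(d)$-coset $\{\sigma:\sigma|_F=\sigma_a|_F\}$. Writing $F_{d'}:=\mbq(E[d'])\cap\mbq(\zeta_n)$ for a divisor $d'\mid d$, a routine computation gives $\pi_{d,d'}(N(d))=\gal(\mbq(E[d'])/F_{d'})$; in particular $N(\ell):=\pi_{d,\ell}(N(d))=\gal(\mbq(E[\ell])/F_\ell)$ for primes $\ell\mid d$, and via the Chinese remainder theorem $N(d)\subseteq\tilde N(d)=\prod_{\ell\mid d}N(\ell)$.

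For property (2) I argue by contradiction. If $\mbq(E[\ell])\subseteq\mbq(\zeta_n)$ for some prime $\ell\mid d$, then, taking $a=1$, the prime $\ell$ together with $n$ satisfies the left-hand condition of the first biconditional in \remarkref{acyclicityremark}, so $G$ would be an acyclicity group with \emph{prime} acyclicity level, contradicting that the acyclicity level $d$ is composite. Hence $F_\ell\subsetneq\mbq(E[\ell])$ for every prime $\ell\mid d$, so $|N(\ell)|=[\mbq(E[\ell]):F_\ell]>1$.

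For property (1), the starting observation is that $S_{E,a,n}'(d)=\emptyset$ says exactly that the coset $S_{E,a,n}(d)$ is covered by $\bigcup_{\ell\mid d}\{\sigma:\sigma|_{\mbq(E[\ell])}=1\}$. I must produce $\tau$ lying in this coset \emph{and} in $\tilde N(d)\cap G(d)$. Since each $F_\ell\subseteq F$, the Chinese remainder theorem identifies $\tilde N(d)\cap G(d)$ with $\gal\bigl(\mbq(E[d])/\prod_{\ell\mid d}F_\ell\bigr)$, so such a $\tau$ exists precisely when $\sigma_a$ fixes $\prod_{\ell\mid d}F_\ell$, i.e. when $\gamma_{a,n}(\mbq(E[\ell]))=1$ for every prime $\ell\mid d$. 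This last fact I deduce from the minimality of $d$: if $\sigma_a$ moved some $F_{\ell_0}$ ($\ell_0\mid d$), then, because $F_{\ell_0}\subseteq F$, no element of $S_{E,a,n}(d)$ could be trivial modulo $\ell_0$; using that restriction gives a surjection $S_{E,a,n}(d)\twoheadrightarrow S_{E,a,n}(d/\ell_0)$ (a consequence of \corref{keyacyclicitycor}, or directly of the surjectivity of $\gal(\mbq(E[d])\mbq(\zeta_n)/\mbq)\to\gal(\mbq(E[d/\ell_0])\mbq(\zeta_n)/\mbq)$ and the correspondence of \remarkref{SEanofmremark}), it would follow that every element of $S_{E,a,n}(d/\ell_0)$ is trivial modulo some prime dividing $d/\ell_0$, i.e. $S_{E,a,n}'(d/\ell_0)=\emptyset$, contradicting minimality of $d$. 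With $\tau$ in hand, $\tau N(d)=S_{E,a,n}(d)$, so the first clause of (1) is immediate; the second clause — for each $\ell\mid d$ some $\sigma'\in\tau N(d)$ is nontrivial modulo every $\ell'\mid d/\ell$ — follows from the identical minimality-plus-surjectivity argument applied to the divisor $d/\ell$ (its failure would again force $S_{E,a,n}'(d/\ell)=\emptyset$).

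For property (3), fix a prime $\ell\mid d$, write $d=\ell d'$, and suppose $1\neq h\in N(d)\cap\ker\pi_{d,d'}$; then $h$ fixes $F$ and fixes $\mbq(E[d'])$, hence does not fix $\mbq(E[d])=\mbq(E[\ell])\mbq(E[d'])$, so $h|_{\mbq(E[\ell])}\neq1$. Take $\sigma'\in\tau N(d)=S_{E,a,n}(d)$ provided by clause (1)(ii) for this $\ell$; combined with clause (1)(i) this $\sigma'$ is trivial modulo $\ell$ and nontrivial modulo every $\ell'\mid d'$. Since $h$ fixes $F$, also $\sigma'h\in S_{E,a,n}(d)$, so $\sigma'h$ is trivial modulo some prime $\ell''\mid d$: if $\ell''\mid d'$, then (as $h$ fixes $\mbq(E[d'])$) $\sigma'|_{\mbq(E[\ell''])}=1$, impossible; if $\ell''=\ell$, then (as $\sigma'|_{\mbq(E[\ell])}=1$) $h|_{\mbq(E[\ell])}=1$, also impossible. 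Thus $N(d)\cap\ker\pi_{d,d'}=\{1\}$, so $\pi_{d,d'}|_{N(d)}$ is injective and identifies $N(d)$ with $N(d'):=\pi_{d,d'}(N(d))=\gal(\mbq(E[d'])/F_{d'})$, whence $N(\ell)=\pi_{d,\ell}(N(d))$ is a quotient of $N(d)\cong N(d')$. The step I expect to be the crux is the one inside property (1): converting the single hypothesis $S_{E,a,n}'(d)=\emptyset$ into the statement that $\sigma_a$ fixes $\mbq(E[\ell])\cap\mbq(\zeta_n)$ for every prime $\ell\mid d$. This requires threading the surjectivity of the restriction maps between the coset sets $S_{E,a,n}(\cdot)$ through the minimality of the acyclicity level, while tracking carefully which intersections of division fields with $\mbq(\zeta_n)$ appear; once this is done, properties (1), (2) and (3) all come from the same circle of ideas.
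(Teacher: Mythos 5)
Your proof is correct and follows the same overall strategy as the paper's: pick a minimal acyclicity witness $(E,a,n)$, take $\chi$ to be restriction to $F := \mbq(E[d])\cap\mbq(\zeta_n)$, and then repeatedly leverage the lifting statement (Corollary~\ref{keyacyclicitycor}, which you invoke via surjectivity of $S_{E,a,n}(d)\to S_{E,a,n}(d')$) against minimality of the acyclicity level $d$. Your argument for item (1) — showing $\gs_a$ fixes each $F_\ell := \mbq(E[\ell])\cap\mbq(\zeta_n)$, which is exactly what $\tau\bmod\ell\in N(\ell)$ means — coincides with the paper's in content, phrased slightly more in terms of field theory.

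Where you diverge is in items (2) and (3), and both divergences are genuine simplifications. For (2), the paper shows $N(\ell)=\{1\}$ forces $\mbq(E[\ell])\subseteq\mbq(\zeta_n)$ and then splits into two cases according to whether $\gs_a|_{\mbq(E[\ell])}$ is trivial, the second case repeating the lifting-plus-minimality argument; you instead observe that $\mbq(E[\ell])\subseteq\mbq(\zeta_n)$ already produces, with $a=1$, an acyclicity witness of level at most $\ell<d$, killing both cases at once. (One could even observe that item (1) already yields $\gs_a|_{F_\ell}=1$, making the paper's second case vacuous; your shortcut is cleaner still.) For (3), the paper invokes Goursat's lemma to write $N(d)\simeq N(\ell)\times_\psi N(d')$ and then proves $\ker\psi=\{1\}$ by a variant of the lifting argument; you instead assume a nontrivial $h\in N(d)\cap\ker\pi_{d,d'}$ and derive a contradiction directly from clauses (1)(i) and (1)(ii) applied to $\gs'h$, then conclude $N(d)\hookrightarrow N(d')$ by injectivity of $\pi_{d,d'}|_{N(d)}$. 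This bypasses Goursat entirely and reuses the structural content of item (1) rather than re-deriving it. Both of your shortcuts are valid; the underlying engine (minimality plus the coset-lifting corollary) is the paper's.
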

\begin{proof}
Let $(E,a,n)$ be a minimal acyclicity witness for $G$; define $A := \gal\left( \mbq(E[d]) \cap \mbq(\zeta_n) / \mbq \right)$ and let {\mbox{$\chi : G(d) \twoheadrightarrow A$}} be the surjective homomorphism corresponding under $G(d) \simeq \gal\left( \mbq(E[d])/\mbq \right)$ to the restriction map $\gal\left( \mbq(E[d])/\mbq \right) \to \gal\left( \mbq(E[d]) \cap \mbq(\zeta_n) / \mbq \right)$.  To prove item (1), pick any $\tau \in \chi^{-1}(\gs_a)$ and note that then $\chi^{-1}(\gs_a) = \tau N(d)$.  Fixing any prime $\ell \mid d$, we observe that $\tau \bmod{\ell} \in N(\ell)$, or else for each $\gs \in \tau N(d)$, $\gs \not\equiv I \bmod{\ell}$, implying that the acyclicity level of $G$ must divide $d/\ell$, a contradiction.  Thus $\tau \in \tilde{N}(d)$; the final two stated properties in item (1) are equivalent to the fact that $d$ is the acyclicity level of $G$. Indeed, the equivalence
\[
S_{E,a,n}'(d) = \emptyset \; \Longleftrightarrow \; \forall \gs \in \tau N(d) \; \exists \ell \mid d \text{ for which } \gs \equiv I \bmod{\ell}
\]
follows directly from the definitions (through the lens of Remark \ref{SEanofmremark}), the identification $G(d) \simeq \gal(\mbq(E[d])/\mbq)$ and the other identifications mentioned above.  For the second of the final two conditions, let $\ell$ be a prime divisor of $d$ and let $d' := d/\ell$.  Suppose for the sake of contradiction that
\begin{equation} \label{forthesakeofcontradiction}
    \forall \gs' \in \tau N(d) \; \exists \text{ a prime } \ell' \mid d' \text{ for which } \gs' \equiv I \bmod{\ell'}.
\end{equation}
We then claim that $S_{E,a,n}'(d') = \emptyset$.  To see this, let $\gs \in \gal(\mbq(E[d'])/\mbq)$ be any automorphism satisfying $\gs |_{\mbq(E[d']) \cap \mbq(\zeta_n)} = \gs_a |_{\mbq(E[d']) \cap \mbq(\zeta_n)}$.  By Corollary \ref{keyacyclicitycor}, we may find a lift $\tilde{\gs} \in \gal(\mbq(E[d])/\mbq)$ of $\gs$ satisfying $\tilde{\gs} |_{\mbq(E[d]) \cap \mbq(\zeta_n)} = \gs_a |_{\mbq(E[d]) \cap \mbq(\zeta_n)}$.  By \eqref{forthesakeofcontradiction}, there exists a prime $\ell' \mid d'$ for which $\tilde{\gs} |_{\mbq(E[\ell'])} = 1$, and since $\tilde{\gs} |_{\mbq(E[\ell'])} = \gs |_{\mbq(E[\ell'])}$, we may see that $S_{E,a,n}'(d') = \emptyset$, contradicting the assumption that $d$ is the acyclicity level of $G$.  This establishes item (1).

To prove item (2), assume for the sake of contradiction that $N(\ell) = \{ 1 \}$ for some prime $\ell$ dividing $d$.  Then $N(d) \subseteq \ker \pi_\ell$, and thus
\[
\mbq(\zeta_n) \supseteq \mbq(E[d]) \cap \mbq(\zeta_n) = \mbq(E[d])^{\ker \chi} \supseteq \mbq(E[d])^{\ker \pi_\ell} = \mbq(E[\ell]).
\]
Consider now the restriction to $\mbq(E[\ell])$ of $\gs_a \in \gal(\mbq(\zeta_n)/\mbq)$.  If $\gs_a |_{\mbq(E[\ell])} = 1$, then $S'_{E,a,n}(\ell) = \emptyset$, contradicting that the acyclicity level of $G$ is composite.  If on the other hand 
\begin{equation} \label{gsaisnotidentity}
\gs_a |_{\mbq(E[\ell])} \neq 1, 
\end{equation}
then consider the group $G(d')$, where $d = \ell \cdot d'$ and $\ell \nmid d'$.  If $\gs \in G(d')$ is any Galois automorphism satisfying
\begin{equation} \label{restrictedcondition}
\gs |_{\mbq(E[d']) \cap \mbq(\zeta_n)} = \gs_a |_{\mbq(E[d']) \cap \mbq(\zeta_n)},
\end{equation}
then by Corollary \ref{keyacyclicitycor} we may find a lift $\tilde{\gs} \in G(d)$ for which $\tilde{\gs} |_{\mbq(E[d]) \cap \mbq(\zeta_n)} = \gs_a |_{\mbq(E[d]) \cap \mbq(\zeta_n)}$.  Since $S_{E,a,n}'(d) = \emptyset$, there must exist a prime $\ell' \mid d$ with $\tilde{\gs} |_{\mbq(E[\ell'])} = 1$, and by \eqref{gsaisnotidentity}, $\ell' \neq \ell$.  Since $\gs \in G(d')$ satisfying \eqref{restrictedcondition} was arbitrary, it follows that $S_{E,a,n}'(d') = \emptyset$, contradicting that $d$ is the acyclicity level of $G$.  We therefore conclude that $N(\ell) \neq \{ 1 \}$, for any prime $\ell$ dividing $d$, establishing item (2).

To verify item (3) we note that, by Goursat's lemma, there exists a group $\Gamma$ and surjective homomorphisms $\psi : N(\ell) \twoheadrightarrow \Gamma$ and $\psi' : N(d') \twoheadrightarrow \Gamma$ such that
\begin{equation} \label{fiberedproductformofN}
N(d) \simeq N(\ell) \times_{\psi} N(d') := \{ (n,n') \in N(\ell) \times N(d') : \psi(n) = \psi'(n') \}.
\end{equation}
Since $N(\ell) \times_{\psi} N(d') \cap \ker \pi_{d,d'} = \ker \psi \times \{ 1 \}$, it suffices to show that $\ker \psi = \{ 1 \}$.  To see this, fix any ${\boldsymbol{\tau}} = (\tau, \tau') \in \chi^{-1}(\gs_a)$, and note that, if $| \ker \psi | > 1$ then for each ${\boldsymbol{\gs}} = (\gs,\gs') \in (\tau,\tau') N(\ell) \times_{\psi} N(d') = \chi^{-1}(\gs_a)$, there exists $\eta \in \ker \psi - \{ I \}$ such that the element ${\boldsymbol{\nu}} := (\gs,\gs') (\eta,1) \in \chi^{-1}(\gs_a)$ satisfies ${\boldsymbol{\nu}} |_{\mbq(E[\ell])} \neq \gs$ and ${\boldsymbol{\nu}} |_{\mbq(E[d'])} = \gs'$.  Since $S_{E,a,n}'(d) = \emptyset$, this implies that, for each ${\boldsymbol{\gs}} \in \chi^{-1}(\gs_a)$, there must be a prime $\ell' \mid d'$ with ${\boldsymbol{\gs}} |_{\mbq(E[\ell'])} = 1$, implying (as before via Corollary \ref{keyacyclicitycor}) that $S_{E,a,n}'(d') = \emptyset$ and contradicting that $d = \ell d'$ is the acyclicity level of $G$.  Therefore $\ker \psi = \{ 1 \}$, and thus $N(d) \cap \ker \pi_{d,d'} = \{ 1 \}$.  Moreover, it now follows from \eqref{fiberedproductformofN} that $N(\ell) \simeq \Gamma$ is a quotient of $N(d')$, as asserted.  This establishes item (3), finishing the proof.
\end{proof}

We may now reduce the proof of Theorem \ref{secondthm} to a finite computation.  Note that, by (3) of Proposition \ref{acyclicitylemma}, the acyclicity level of any acyclicity group must divide its $\GL_2$-level. 
Define the following collection of open subsets of $\GL_2(\hat{\mbz})$:
\begin{equation} \label{defofmfG}
\mf{G} := \left\{ G \subseteq \GL_2(\hat{\mbz}) : \;  \begin{matrix} G \text{ is an open acyclicity group of composite} \\ \text{$\GL_2$-level that is equal to its acyclicity level}  \end{matrix} \right\}.
\end{equation}
Let $E$ be an elliptic curve over $\mbq$ that belongs to an infinite modular curve family with the property that there exist relatively prime $a, n \in \mbn$ with $C_{E,a,n} = 0$, and assume that there is no prime $\ell \in \{ 2, 3, 5 \}$ for which $\mbq(E[\ell]) \subseteq \mbq(\zeta_n)$ and $\gs_a |_{\mbq(E[\ell])} = 1$.  By  Theorem \ref{abeliandivisionfield}, Remark \ref{acyclicityremark} and \eqref{modularinterpretation}, we must then have that $j_E \in j_{\tilde{G}}\left( X_{\tilde{G}}(\mbq) \right)$ for some $G \in \mf{G}$ (where $\tilde{G}$ is as in \eqref{defofGtilde}) and by Faltings' theorem \cite{faltings}, $\genus(X_{\tilde{G}}) \leq 1$.
Moreover, the set of $\GL_2$-levels of $G \in \mf{G}$ is bounded, as the following corollary of Proposition \ref{acyclicitylemma} shows.  Define the \emph{$\SL_2$-level} of an open subgroup $G \subseteq \GL_2(\hat{\mbz})$ to be the smallest $m \in \mbn$ for which
\[
\ker\left( \SL_2(\hat{\mbz}) \to \SL_2(\mbz/m\mbz) \right) \subseteq G.
\]
It is not difficult to prove that, for any open subgroup $G \subseteq \GL_2(\hat{\mbz})$, the $\SL_2$-level of $G$ divides the $\GL_2$-level of $G$.  In general the quotient $(\text{$\GL_2$-level of }G)/(\SL_2\text{-level of }G) \in \mbn$ is unbounded, as $G \subseteq \GL_2(\hat{\mbz})$ varies over all open subgroups.
\begin{cor} \label{sl2levelcor}
Let $\mf{G}$ be as in \eqref{defofmfG}.  If $G \in \mf{G}$ then the $\SL_2$-level of $\tilde{G}$ is equal to the $\SL_2$-level of $G$, and the $\SL_2$-level of $G$ is equal to the $\GL_2$-level of $G$
\end{cor}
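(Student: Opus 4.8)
The plan is to reduce the whole corollary to a single statement about the image $G(d)$ of $G$ at the level $d$, where $d$ denotes the common value of the $\GL_2$-level and the acyclicity level of $G$: it is composite by hypothesis and square-free by the last sentence of Remark~\ref{acyclicityremark}. Since $d$ is square-free, the Chinese remainder theorem gives $\SL_2(\mbz/d\mbz) \cong \prod_{\ell \mid d}\SL_2(\mbz/\ell\mbz)$; write $\SL_2^{(\ell)}$ for the factor $\SL_2(\mbz/\ell\mbz)$ sitting in the $\ell$-coordinate, equivalently $\SL_2^{(\ell)} = \ker(\SL_2(\mbz/d\mbz) \to \SL_2(\mbz/(d/\ell)\mbz))$. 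The statement to be proved is that $\SL_2^{(\ell)} \nsubseteq \tilde{G}(d)$ for every prime $\ell \mid d$. Granting it, the corollary follows: since $d$ is the $\GL_2$-level of $G$ we have $G = \pi_d^{-1}(G(d))$ and hence $\tilde{G} = \pi_d^{-1}(\langle G(d), -I\rangle)$, so the $\GL_2$-levels of both $G$ and $\tilde{G}$ divide $d$, and therefore --- by the divisibility fact quoted just before the corollary --- so do their $\SL_2$-levels. Were the $\SL_2$-level of $G$ (or of $\tilde{G}$) a proper divisor $e$ of $d$, one picks a prime $\ell \mid d$ with $\ell \nmid e$; then $e \mid d/\ell$, and reducing the defining inclusion modulo $d$ gives $\SL_2^{(\ell)} \subseteq \ker(\SL_2(\mbz/d\mbz) \to \SL_2(\mbz/e\mbz)) \subseteq \tilde{G}(d)$, contradicting the displayed statement. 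Hence the two $\SL_2$-levels and the $\GL_2$-level of $G$ all equal $d$.

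To prove the displayed statement I would fix a minimal acyclicity witness $(E,a,n)$ for $G$ and bring in the surjection $\chi : G(d) \twoheadrightarrow A$ onto the \emph{abelian} group $A = \gal(\mbq(E[d]) \cap \mbq(\zeta_n)/\mbq)$, with kernel $N(d)$, supplied by Proposition~\ref{acyclicitylemma}. Suppose, for contradiction, that $\SL_2^{(\ell)} \subseteq \tilde{G}(d)$ for some prime $\ell \mid d$, and put $H := \SL_2^{(\ell)} \cap G(d)$. Since $\tilde{G}(d) = G(d) \cup (-I)G(d)$, the subgroup $H$ has index $1$ or $2$ in $\SL_2^{(\ell)} \cong \SL_2(\mbz/\ell\mbz)$. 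As $\SL_2^{(\ell)} \subseteq \ker\pi_{d,d/\ell}$, item (3) of Proposition~\ref{acyclicitylemma} (which asserts $N(d) \cap \ker\pi_{d,d/\ell} = \{1\}$) gives $N(d) \cap \SL_2^{(\ell)} = \{1\}$, so $\chi$ is injective on $\SL_2^{(\ell)}$ and hence on every subgroup of it. If $H = \SL_2^{(\ell)}$, then $\SL_2(\mbz/\ell\mbz)$ embeds into the abelian group $A$, which is impossible because $\SL_2(\mbz/\ell\mbz)$ is non-abelian for every prime $\ell$. If $[\SL_2^{(\ell)} : H] = 2$, then $\SL_2(\mbz/\ell\mbz)$ admits a subgroup of index $2$; since $\SL_2(\mbf_3)^{\mathrm{ab}} \cong \mbz/3\mbz$ and $\SL_2(\mbf_\ell)$ is perfect for $\ell \geq 5$, this forces $\ell = 2$, with $H$ the unique index-$2$ subgroup $A_3 \cong \mbz/3\mbz$ of $\SL_2(\mbz/2\mbz) \cong S_3$.

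I expect this remaining $\ell = 2$ case to be the main obstacle --- it is the only place where the extra generator $-I$ of $\tilde{G}$ genuinely enters. Writing $d = 2d'$ with $d'$ odd and using $\GL_2(\mbz/d\mbz) \cong \GL_2(\mbz/2\mbz) \times \GL_2(\mbz/d'\mbz)$ to represent elements as pairs, we have $-I = (I,-I)$, $\SL_2^{(2)} = \SL_2(\mbz/2\mbz) \times \{I\}$ and $H = A_3 \times \{I\} \subseteq G(d)$. For each of the three transpositions $t \in \SL_2(\mbz/2\mbz)$ the element $(t,I) \in \SL_2^{(2)}$ lies outside $H$, hence in $(I,-I)\,G(d)$, so $(t,-I) \in G(d)$; set $a_t := \chi(t,-I) \in A$. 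Since $(t,-I)^2 = (I,I)$, each $a_t$ has order dividing $2$, so for distinct transpositions $t_1, t_2$ the product $a_{t_1}a_{t_2}$ has order dividing $2$ in the abelian group $A$. On the other hand $t_1 t_2$ is a nontrivial $3$-cycle, so $a_{t_1}a_{t_2} = \chi(t_1 t_2, I)$ is a nontrivial element of $\chi(H) \cong \mbz/3\mbz$ and therefore has order $3$ --- the desired contradiction. Apart from this $\ell = 2$ analysis, everything is routine bookkeeping: the square-freeness and compositeness of $d$, the identity $\tilde{G} = \pi_d^{-1}(\langle G(d), -I\rangle)$, and the already-quoted fact that the $\SL_2$-level divides the $\GL_2$-level.
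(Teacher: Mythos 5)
Your proof is correct and it takes a genuinely different route from the paper's. Both arguments rest on item~(3) of Proposition~\ref{acyclicitylemma} and on the abelianness of $A$, but the paper goes through commutator subgroups: writing $S := G \cap \SL_2(\hat{\mbz})$, it uses the inclusion $N(d) \supseteq [G(d),G(d)] \supseteq [S(d),S(d)]$, observes that a too-small $\SL_2$-level would force $S(d) \simeq \SL_2(\mbz/\ell\mbz) \times S(d')$ and hence $N(d) \cap \ker\pi_{d,d'} \supseteq [\SL_2(\mbz/\ell\mbz),\SL_2(\mbz/\ell\mbz)] \times \{1\} \neq \{1\}$, and then handles the discrepancy between the $\SL_2$-levels of $G$ and $\tilde{G}$ by a separate fibered-product argument that cites external lemmas from \cite{jonesmcmurdy} (Lemmas 2.1 and 2.4) and \cite{langtrotterfrob} (Lemma 1). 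Your proof instead reduces all three equalities at once to the single clean statement $\SL_2^{(\ell)} \nsubseteq \tilde{G}(d)$ for every prime $\ell \mid d$, and proves it via injectivity of $\chi$ on $H := \SL_2^{(\ell)} \cap G(d)$ together with group theory of $\SL_2(\mbf_\ell)$: the index-$1$ case is killed because $\SL_2(\mbf_\ell)$ is non-abelian, the index-$2$ case forces $\ell = 2$ (perfectness for $\ell \geq 5$, abelianization $\mbz/3\mbz$ for $\ell = 3$), and the residual $\ell = 2$ case is dispatched by the observation that the three transpositions paired with $-I$ give order-dividing-$2$ elements of $A$ whose pairwise products land injectively in $\chi(H) \cong \mbz/3\mbz$. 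This buys a self-contained proof that treats $G$ and $\tilde{G}$ uniformly and avoids the external citations, at the cost of a slightly more intricate case analysis at $\ell = 2$. One small imprecision to tidy: you say ``$\chi$ is injective on $\SL_2^{(\ell)}$,'' but $\chi$ is only defined on $G(d)$; what item~(3) actually gives is that $\chi$ restricted to $H = \SL_2^{(\ell)} \cap G(d)$ has trivial kernel, which is all your argument uses.
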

\begin{proof}
Let us denote by $d$ the $\GL_2$-level of $G$ and set $S := G \cap \SL_2(\hat{\mbz})$.  Assume for the sake of contradiction that the $\SL_2$-level of $G$ is strictly less than $d$.  Let $\ell$ be any prime number dividing $d$ that does \emph{not} divide the $\SL_2$-level of $G$ (since $d$ is square-free by \eqref{defofmfG} and Remark \ref{acyclicityremark}, such a prime $\ell$ must exist). Writing $d =: \ell d'$ with $\ell \nmid d'$, we then have that, under the isomorphism of the Chinese remainder theorem,
\[
S(d) \simeq \SL_2(\mbz/\ell\mbz) \times S(d').
\]
It follows that $[G(d),G(d)] \supseteq [S(d),S(d)] \simeq [\SL_2(\mbz/\ell\mbz), \SL_2(\mbz/\ell\mbz)] \times [S(d'),S(d')]$.  Thus, if $\chi : G(d) \to A$ is as in Proposition \ref{acyclicitylemma} and $N(d) := \ker \chi$, we have
\[
N(d) \supseteq [G(d),G(d)] \supseteq [\SL_2(\mbz/\ell\mbz), \SL_2(\mbz/\ell\mbz)] \times \{ 1 \},
\]
contradicting part (3) of Proposition \ref{acyclicitylemma}.  Thus, the $\SL_2$-level of $G$ is equal to $d$, as asserted.

Next, suppose that the $\SL_2$-level of $\tilde{G}$ is less than the $\SL_2$-level of $G$. By \cite[Lemma 2.1]{jonesmcmurdy}, the latter is twice the former, and the proof of \cite[Lemma 2.4]{jonesmcmurdy} shows that, denoting by $\tilde{d}$ the $\SL_2$-level of $\tilde{G}$, we have
\[
S(2\tilde{d}) \simeq \SL_2(\mbz/2\mbz) \times_{\psi} S(\tilde{d}) := \{ (s_2,s_{\tilde{d}}) \in \SL_2(\mbz/2\mbz) \times S(\tilde{d}) : \psi_2(s_2) = \psi_{\tilde{d}}(s_{\tilde{d}}) \},
\]
where $\psi_2 : \SL_2(\mbz/2\mbz) \to \mbz/2\mbz$ and $\psi_{\tilde{d}} : S(\tilde{d}) \to \mbz/2\mbz$ are surjective homomorphisms. Since the common quotient $\mbz/2\mbz$ is cyclic, it follows from \cite[Lemma 1, p. 174]{langtrotterfrob} that, under the isomorphism of the Chinese remainder theorem, $[S(2d),S(2d)] \simeq [\SL_2(\mbz/2\mbz), \SL_2(\mbz/2\mbz)] \times [S(d'),S(d')]$ and thus $N(d) \supseteq [\SL_2(\mbz/2\mbz), \SL_2(\mbz/2\mbz)] \times \{ 1 \}$ in this case as well, again contradicting part (3) of Proposition \ref{acyclicitylemma}.  This proves that the $\SL_2$-levels of $\tilde{G}$ and $G$ agree, concluding the proof of the corollary.
\end{proof}
\begin{cor} \label{levelscor}
Let $\mf{G}$ be as in \eqref{defofmfG}. If $G \in \mf{G}$ and $\genus(G) \leq 1$ then the $\GL_2$-level of $G$ belongs to the set $\left\{ 6, 10, 14, 15, 21, 22, 26, 30, 33, 39, 42 \right\}$.
\end{cor}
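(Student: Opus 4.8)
The plan is to combine the structural facts already established for members of $\mf{G}$ (Proposition \ref{acyclicitylemma}, Remark \ref{acyclicityremark} and Corollary \ref{sl2levelcor}) with the classification of congruence subgroups of small genus. First I would record the input. Let $G \in \mf{G}$ and let $d$ be its $\GL_2$-level; by definition of $\mf{G}$, $d$ is composite and equals the acyclicity level of $G$, and by the last sentence of Remark \ref{acyclicityremark} it is square-free. By Corollary \ref{sl2levelcor}, $d$ is also the $\SL_2$-level of $G$ and of $\tilde{G} := \langle G, -I\rangle$. Hence the congruence subgroup $\Gamma := \tilde{G} \cap \SL_2(\hat{\mbz})$ contains $-I$ and has exact level $d$, and its image $\bar{\Gamma}$ in $\PSL_2(\hat{\mbz})$ corresponds to a congruence subgroup of $\PSL_2(\mbz)$ of level $d$ whose associated modular curve is $X_{\tilde{G}}$, of genus $\genus(X_{\tilde{G}}) = \genus(G) \leq 1$.

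Next I would invoke the classification of congruence subgroups of $\PSL_2(\mbz)$ of genus at most one (due to Cummins and Pauli; the pieces relevant here also appear in the modular-curve tables of Sutherland and Zywina \cite{sutherlandzywina}). This produces a finite, explicit list of the possible levels, and the genus formula $\genus = 1 + \mu/12 - \nu_2/4 - \nu_3/3 - \nu_\infty/2$ — with $\mu$ the index of $\bar{\Gamma}$ in $\PSL_2(\mbz)$ and $\nu_2,\nu_3,\nu_\infty$ the numbers of elliptic points of orders $2$, $3$ and of cusps — makes the finiteness transparent: for square-free $d = \prod_{\ell \mid d}\ell$ one has $\mu = \prod_{\ell \mid d}\mu_\ell$, with each $\mu_\ell$ at least the minimal $\PSL_2$-index of a proper subgroup of $\SL_2(\mbz/\ell\mbz)$ (namely $\ell+1$), so requiring $\genus \leq 1$ bounds both the prime factors of $d$ and their number. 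Restricting the resulting level list to square-free composite values yields the set $\{6,10,14,15,21,22,26,30,33,39,42\}$, together with at most a small number of further square-free composite levels (such as $35$) for which one must still check whether a genus-$\leq 1$ congruence subgroup of that exact level exists on the $\SL_2$-side.

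Any such leftover candidate I would eliminate using Proposition \ref{acyclicitylemma}, applied to a minimal acyclicity witness $(E,a,n)$ for $G$, with $\chi : G(d) \twoheadrightarrow A$ and $N(d) = \ker\chi$ as there. Since $G(d) = \prod_{\ell\mid d}G(\ell)$ by the Chinese remainder theorem, $N(d) \subseteq \tilde{N}(d) = \prod_{\ell\mid d}N(\ell)$, and $G(d)/N(d) = A$ is abelian, each $G(\ell)/N(\ell)$ is abelian, whence $N(\ell) \supseteq [G(\ell),G(\ell)]$; part (2) gives $N(\ell) \neq \{1\}$ for every $\ell\mid d$, and part (3) gives, for $d = \ell d'$, that $N(\ell)$ is a quotient of $N(d')$. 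When $d = \ell_1\ell_2$ has two prime factors this forces $N(\ell_1) \cong N(\ell_2)$, a common nontrivial group containing the respective commutator subgroups (and the case of three prime factors is handled the same way). Imposing in addition that $\tilde{G}$ has $\SL_2$-level exactly $d$ and $\genus(X_{\tilde{G}}) \leq 1$, a finite enumeration in {\tt{magma}} of the subgroups of $\GL_2(\mbz/\ell\mbz)$ for the (few, small) primes $\ell\mid d$ shows that no admissible configuration survives for the leftover candidates, leaving exactly the eleven stated values.

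I expect the main obstacle to be this last computation. One must enumerate the relevant open subgroups $\tilde{G} \subseteq \GL_2(\mbz/d\mbz)$ for each surviving $d$, compute $\genus(X_{\tilde{G}})$, and — most delicately — verify the conditions of Proposition \ref{acyclicitylemma}, which involve the field $\mbq(E[d]) \cap \mbq(\zeta_n)$, i.e. the interaction of the restriction map on $\gal(\mbq(E[d])/\mbq)$ with the determinant (cyclotomic) character modulo $n$. Keeping careful track of $-I$ throughout, so that the $\SL_2$-level of $\Gamma$, the $\GL_2$-level of $G$, and the level of $\bar{\Gamma}$ in the Cummins--Pauli classification all coincide (as guaranteed by Corollary \ref{sl2levelcor}), is the other point that requires attention.
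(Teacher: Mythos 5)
Your proposal follows the paper's approach in its essentials: use Corollary \ref{sl2levelcor} to identify the $\GL_2$-level of $G$ with the $\SL_2$-level of $\tilde{G}$, note that this level is square-free (Remark \ref{acyclicityremark}) and composite (by definition of $\mf{G}$), and then appeal to the Cummins--Pauli classification of congruence subgroups of $\PSL_2(\mbz)$ of genus $\leq 1$. Where you diverge is in your uncertainty about whether the Cummins--Pauli table alone yields exactly the stated set, and in proposing a secondary elimination step, via Proposition \ref{acyclicitylemma} and a {\tt{magma}} enumeration, to dispose of hypothetical leftover levels such as $35$. That step is not needed here: Table 2 of Cummins--Pauli already lists, for each genus $\leq 1$ congruence subgroup, its exact level, and the square-free composite levels occurring in that table are precisely $\{6,10,14,15,21,22,26,30,33,39,42\}$. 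In particular $35$ does not occur, because no congruence subgroup of exact level $35$ has genus $\leq 1$, so there is nothing to eliminate. The group-theoretic constraints of Proposition \ref{acyclicitylemma} and the accompanying {\tt{magma}} computation are indeed deployed in the paper, but later — to pass from this finite list of levels to the conclusion $G \, \dot\subseteq \, G_6$ in the proof of Theorem \ref{secondthm} (see \eqref{whatmagmashows}) — not in the proof of Corollary \ref{levelscor} itself. Apart from this extra and unnecessary safety net, your argument is sound and matches the paper's.
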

\begin{proof}
By Corollary \ref{sl2levelcor}, the (square-free) $\GL_2$-level of $G$ is equal to the $\SL_2$-level of $\tilde{G}$; from Table 2 of \cite{cumminspauli} one can read off the $\SL_2$-levels of all $\tilde{G}$ for which $\genus(X_{\tilde{G}}) \leq 1$.  This yields our list of $\GL_2$-levels.
\end{proof}
Corollary \ref{levelscor}, taken together with a computer calculation using the computational software package {\tt{Magma}} \cite{MAGMA}, shows that
\begin{equation} \label{whatmagmashows}
\forall G \in \mf{G}, \quad \genus(X_{\tilde{G}}) \leq 1 \; \Longrightarrow \; G \; \dot\subseteq \; G_6, 
\end{equation}
where $G_6 \subseteq \GL_2(\hat{\mbz})$ is the group introduced in Section \ref{proofofpartasection}.
Thus,
\[
j_E \in \bigcup_{{\begin{substack} {G \in \mf{G} \\ \genus(X_{\tilde{G}}) \leq 1 } \end{substack}}} j_{\tilde{G}}(X_{\tilde{G}}(\mbq)) 
= j_{G_6}(X_{G_6}(\mbq)).
\]
By \eqref{specialmodularinterpretation}, this finishes the proof of Theorem \ref{secondthm}.  The code used to perform the above-mentioned computation can be found at the following link:
\[
\text{{\tt{https://github.com/ncjones-uic/AcyclicReductions}}}
\]

\begin{remark}
Serre's uniformity question (see \cite[\S 4.3, p. 299]{serre} and also \cite[Conjecture 1.12]{zywina}) asks whether, for each prime $\ell > 37$ and for each elliptic curve $E$ over $\mbq$, one has $\rho_{E,\ell}(G_\mbq) = \GL_2(\mbz/\ell\mbz)$.  Assuming an affirmative answer to this question, we may see that the conclusion of Theorem \ref{secondthm} holds for each elliptic curve $E$ over $\mbq$ whose $j$-invariant does not lie in a certain finite set.
\end{remark}


\begin{thebibliography}{99}
\bibitem{akbalguloglu} Y. Akbal and A. M. G\"ulo\v{g}lu \emph{Cyclicity of Elliptic Curves Modulo Primes in Arithmetic Progressions}, Canad. J. Math.  DOI:  {\tt{10.4153/S0008414X21000237}}

\bibitem{banksetal} W. Banks, F. Pappalardi and I. Shparlinski, \emph{On group structures realized by elliptic curves over arbitrary finite fields}, Exp. Math. \textbf{21} (2012), no. 1, 11--25.

\bibitem{banksshparlinski} W. Banks and I. Shparlinski, \emph{Sato-Tate, cyclicity, and divisibility statistics on average for elliptic curves of small height}, Isreal J. Math. \textbf{173} (2009), no. 1, 253--277.

\bibitem{bmp} I. Borosh, C. J. Moreno, and H. Porta, \emph{Elliptic Curves Over Finite Fields. II}, Math. Comp. \textbf{29} (1975), no. 131, 951--964.

\bibitem{MAGMA} W. Bosma, J. Cannon and C. Playoust, \emph{The {M}agma algebra system. {I}. {T}he user language}, J. Symbolic Comput. \textbf{24}, no. 3-4, (1997), 235--265.

\bibitem{brau} J. Brau, \emph{Galois representations of elliptic curves and abelian entanglements}, 2015,
Doctoral Thesis, Leiden University.

\bibitem{campagnastevenhagen} F. Campagna and P. Stevenhagen, \emph{Cyclic reductions of elliptic curves}, preprint.  Available at
{\tt{https://arxiv.org/abs/2001.00028}}

\bibitem{davidetal} V. Chandee, C. David, D. Koukoulopoulos and E. Smith, \emph{The Frequency of Elliptic Curve Groups over Prime Finite Fields} Canad. J. Math. Vol. \textbf{68} (2016), no. 4, 721--761.

\bibitem{cojocaru} A.C. Cojocaru, \emph{On the Cyclicity of the Group of $\mbf_p$-rational Points of non-CM Elliptic Curves}, J. Number Theory (2002), 335--350.

\bibitem{cojocarusynopsis} A.C. Cojocaru, \emph{Primes, elliptic curves and cyclic Groups.  With an appendix by Cojocaru, Matthew Fitzpatrick, Thomas Insley and Hakan Yilmaz}, Contemp. Math., {\bf{740}}, Centre Rech. Math. Proc., Analytic methods in arithmetic geometry, 1–69, Amer. Math. Soc., Providence, RI (2019).

\bibitem{cojocarumurty} A.C. Cojocaru and M. R. Murty, \emph{Cyclicity of Elliptic Curves Modulo $p$ and Elliptic Curve
Analogues of Linnik’s Problem}, Math. Ann. \textbf{330} (2004), 601--625.

\bibitem{cumminspauli} C. J. Cummins and S. Pauli, \emph{Congruence subgroups of $\PSL(2,\mbz)$ of genus less than or equal to 24}, Exp. Math. \textbf{12} (2003), No. 2, 243--255.

\bibitem{delignerapoport} P. Deligne and M. Rapoport, \emph{Les sch\'{e}mas de modules de courbes elliptiques}, in Modular Functions of One Variable II, Lecture Notes in Mathematics \textbf{349} (1973) 143--316.

\bibitem{duke} W. Duke, \emph{Almost all reductions modulo p of an elliptic curve have a large exponent}, C. R. Math. Acad. Sci. Paris \textbf{337} (2003), 689--692.


\bibitem{faltings} G. Faltings, \emph{Endlichkeitss\"{a}tze für abelsche Variet\"{a}ten \"{u}ber Zahlk\"{o}rpern}, Invent. Math. \textbf{73} (1983), 349--366.

\bibitem{freibergkurlberg} T. Freiberg and P. Kurlberg,
\emph{On the average exponent of elliptic curves modulo p}, Int. Math. Res. Not. IMRN \textbf{2014}, no. 8, 2265--2293.

\bibitem{gonzalezjimezezlozanorobledo} E. Gonz\'{a}lez-Jim\'{e}nez and \'{A}. Lozano-Robledo, \emph{Elliptic curves with abelian division fields}, 
Math. Z. \textbf{283} (2016), 835--859.

\bibitem{hooley} C. Hooley, \emph{On Artin's Conjecture}, J. Reine Angew. Math. \textbf{225} (1967), 209--220.

\bibitem{jonesmcmurdy} N. Jones and K. McMurdy, \emph{Elliptic curves with non-abelian entanglements}, New York J. Math. \textbf{28} (2022), 182--229.

\bibitem{langtrotterfrob} S. Lang and H. Trotter, \emph{Frobenius distribution in $\GL_2$ extensions}, 
Lecture Notes in Math. \textbf{504}, Springer (1976). 

\bibitem{langtrotter} S. Lang and H. Trotter, \emph{Primitive points on elliptic curves}, Bull. Amer. Math. Soc. {\bf{83}} (1977), no. 2, 289--292.

\bibitem{lozanorobledoCM} \'{A}. Lozano-Robledo, \emph{Galois representations attached to elliptic curves with complex multiplication}, preprint.  Available at
{\tt{https://arxiv.org/pdf/1809.02584.pdf}}

\bibitem{murty1} M. R. Murty, \emph{On Artin's Conjecture}, J. Number Theory \textbf{16} (1983), 147--168.

\bibitem{rousedzb} J. Rouse and D. Zureick-Brown, \emph{Elliptic curves over $\mbq$ and $2$-adic images of Galois}, Research in Number Theory, \textbf{1} (2015).

\bibitem{serreo} J-P. Serre, \emph{Oeuvres. Vol. III} (French) [Collected Papers Vol. III] 1972--1984, Springer-Verlag, Berlin, 1986.

\bibitem{serre} J-P. Serre,  \emph{Propri\'{e}t\'{e}s galoisiennes des points d'ordre fini des courbes elliptiques}, Invent. Math. \textbf{15} (1972), 259--331.

\bibitem{silverman} J. Silverman, \emph{The arithmetic of elliptic curves}, Springer, New York, 1986.

\bibitem{sutherlandzywina} A. Sutherland and D. Zywina, \emph{Modular curves of prime-power level with infinitely many rational points}, Algebra and Number Theory \textbf{11}, no. 5 (2017), 1199--1229.

\bibitem{zywina} D. Zywina, \emph{On the possible images of the mod $\ell$ representations associated to elliptic curves over $\mbq$}, preprint.  Available at
{\tt{https://arxiv.org/abs/1508.07660}}


\end{thebibliography}
\end{document}